\numberwithin{equation}{subsection}
\theoremstyle{plain} 
\newtheorem{prop}[equation]{Proposition}
\newtheorem{coro}[equation]{Corollary}
\newtheorem{lemm}[equation]{Lemma}
\newtheorem*{theoA}{Theorem~A}
\newtheorem*{theoB}{Theorem~B}
\newtheorem*{theoC}{Theorem~C}
\theoremstyle{definition}
\newtheorem{defi}[equation]{Definition}
\newtheorem{exam}[equation]{Example}
\newtheorem{nota}[equation]{Notation}
\newtheorem*{rema*}{Remark}
\newcounter{ITEM}
\newcommand\ITEM[1]{\setcounter{ITEM}{#1}\leavevmode\hbox{\rm(\roman{ITEM})}}
\renewcommand\aa{a}
\newcommand\ai[1]{121...[#1]}
\newcommand\aii[1]{212...[#1]}
\newcommand\Att{\widetilde{\mathrm{A}}_2}
\newcommand\bb{b}
\newcommand\BP[1]{B^{\scriptscriptstyle+}_{#1}}
\newcommand\card{\mathtt{\#}}
\newcommand\cc{c}
\newcommand\CC{C}
\newcommand\comp{\mathbin{\vcenter{\hbox{$\scriptscriptstyle\circ$}}}}
\newcommand\dd{d}
\newcommand\DD{D}
\newcommand\dive{\preccurlyeq\nobreak}
\newcommand\ee{e}
\newcommand\ev{\textsc{ev}}
\newcommand\ff{f}
\newcommand\FF{F}
\newcommand\fl{\rightarrow}
\newcommand\ft{\phi}
\renewcommand\ge{\geqslant}
\renewcommand\gg{g}
\newcommand\hh{h}
\newcommand\HH{H}
\newcommand\HS[1]{\hspace{#1ex}}
\newcommand\ii{i}
\newcommand\jj{j}
\newcommand\kk{k}
\renewcommand\le{\leqslant}
\newcommand\LG[1]{\Vert#1\Vert}
\newcommand\lLex{<^*}
\newcommand\mm{m}
\newcommand\MM{M}
\newcommand\NF{\textsc{nf}}
\newcommand\nm{N}
\newcommand\nmr{\overline{N}}
\newcommand\nmR[1]{\nm^{[#1]}}
\newcommand\nmrS{\nmr}
\newcommand\nmS{\nm}
\newcommand\nn{n}
\newcommand\NNNN{\mathbb{N}}
\newcommand\one{\underline{1}}
\newcommand\pdots{\hspace{0.2ex}{\cdot}{\cdot}{\cdot}\hspace{0.2ex}}
\newcommand\Pow[2]{#1^{[#2]}}
\newcommand\pp{p}
\newcommand\PP{P}
\newcommand\PRESp[2]{\langle#1\,\vert\, #2\rangle^{\scriptscriptstyle\!+}\!}
\newcommand\qq{q}
\newcommand\rr{r}
\newcommand\RR{R}
\newcommand\sep{\HS{0.05}{\vert}\HS{0.05}}
\newcommand\sh{\mathrm{sh}}
\newcommand\sig[1]{\sigma_{\hspace{-0.2ex}#1}^{\null}}
\newcommand\sigg[2]{\sigma_{\hspace{-0.2ex}#1}^{#2}}
\renewcommand\sp{\HS{0.15}\raise1pt\hbox{$\scriptstyle\vert$}\HS{0.15}}
\renewcommand\ss{s}
\renewcommand\SS{S}
\newcommand\ssm[1]{\ss_{#1,+}}
\newcommand\SSm{\SS_\ee}
\newcommand\ssp[1]{\ss_{#1,-}}
\newcommand\SSp{{\SS^\ee}}
\newcommand\Sym{\mathfrak{S}}
\renewcommand\tt{t}
\newcommand\TT{T}
\newcommand\tta{\mathtt{a}}
\newcommand\ttb{\mathtt{b}}
\newcommand\ttc{\mathtt{c}}
\newcommand\ttd{\mathtt{d}}
\newcommand\ttx{\mathtt{x}}
\newcommand\tty{\mathtt{y}}
\newcommand\ttz{\mathtt{z}}
\newcommand\uu{u}
\renewcommand\vdots{\vert\pdots\vert}
\def\VR(#1,#2){\vrule width0pt height#1mm depth#2mm}
\newcommand\vv{v}
\newcommand\VVV[1]{\mathrm{VV}_{\!\!#1}}
\newcommand\wdots{, ...\HS{0.2},}
\newcommand\ww{w}
\newcommand\WW{W}
\newcommand\wwh{\widehat\ww}
\newcommand\www{\overrightarrow\ww}
\newcommand\XX{X}
\title{Quadratic normalisation in monoids} 
\author{Patrick DEHORNOY}
\address{Laboratoire de Math\'ematiques Nicolas Oresme,
CNRS UMR 6139, Universit\'e de Caen, 14032 Caen, France}
\email{patrick.dehornoy@unicaen.fr}
\urladdr{www.math.unicaen.fr/\!\hbox{$\sim$}dehornoy}
\author{Yves GUIRAUD}
\address{INRIA $\pi r^2$, Laboratoire Preuves, Programmes et Syst\`emes, CNRS UMR 7126, Universit\'e Paris~7, Case 7014, 75205 Paris Cedex 13, France}
\email{yves.guiraud@pps.univ-paris-diderot.fr}
\urladdr{www.pps.univ-paris-diderot.fr/~guiraud}
\keywords{normal form; normalisation; rewriting system; termination; convergence; plactic monoid; Chinese monoid; Artin--Tits monoid; Garside family}
\subjclass{20M05, 68Q42, 20F10, 20F36, 18B40}
\begin{document}

\begin{abstract}
In the general context of presentations of monoids, we study normalisation processes that are determined by their restriction to length-two words. Garside's greedy normal forms and quadratic convergent rewriting systems, in particular those associated with the plactic monoids, are typical examples. Having introduced a parameter, called the class and measuring the complexity of the normalisation of length-three words, we analyse the normalisation of longer words and describe a number of possible behaviours. We fully axiomatise normalisations of class~$(4, 3)$, show the convergence of the associated rewriting systems, and characterise those deriving from a Garside family.
\end{abstract}

\maketitle

\section{Introduction}

A normal form for a monoid~$\MM$, with a specified generating subfamily~$\SS$, is a map that assigns to each element of~$\MM$ a distinguished representative word over~$\SS$. Our aim in this paper is to investigate a certain type of such normal forms and, more precisely, the associated normalisation processes, that is, the syntactic transformations that lead from an arbitrary word to a normal word. Here we restrict to geodesic normal forms, which select representatives of minimal length, and investigate the quadratic case, that is, when some locality conditions are satisfied: that a word is normal if, and only if, each of its length-two factors are normal, and that one can always transform a word into a normal word by a finite sequence of steps, each of which consists in normalising a length-two factor. 

This general framework includes two well-known classes of normalisation processes: those associated with Garside families as investigated in~\cite{Dig} and~\cite{Garside}, building on the seminal example of the greedy normal form in Artin's braid monoids~\cite{Adj, ElM, Eps}, and those associated with quadratic rewriting systems as investigated for instance in~\cite{GaussentGuiraudMalbos} for Artin monoids and in~\cite{BokutChenChenLi, CainGrayMalheiro} for plactic monoids. So our current development can be seen as an effort to unify various approaches and understand their common features. This program is made natural by the observation that, in spite of their unrelated definitions, the normalisation processes arising in the above mentioned situations share common mechanisms: for instance, in each case, a length-three word can be normalised in three steps, successively normalising the length-two factors in position~$2$-$3$, then in position~$1$-$2$, and in position~$2$-$3$ again.

D.\,Krammer's ideas had a seminal influence in our approach, in particular for the connection between normalisation and the monoid underlying Subsection~\ref{SS:Axiomatisation}, which he investigated in~\cite{KraArt}. A similar connection was independently discovered by A.\,Hess and V.\,Ozornova in~\cite{Hes, Ozo, HeOz}, partly building on unpublished work by M.\,Rodenhausen. Our current approach is close to theirs in the case of graded monoids. In this case, beyond minor terminology discrepancies, the factorability structures of~\cite{HeOz} correspond to what we call normalisations of class~$(4,3)$. But, in the general case, the two viewpoints are not directly comparable because of divergent treatment of units and invertible elements: in both approaches a ``dummy'' element is used, but with different assumptions, resulting in different notions of complexity and different conclusions. It seems that every factorability structure yields a normalisation of class~$(4,5)$, but understanding which normalisations of class~$(4,5)$ arise in this way remains open. 

Let us present our main results. The central technical notion is that of a \emph{normalisation}, which is a pair~$(\SS, \nm)$ made of a set~$\SS$ and an idempotent length-preserving map~$\nm$ from the free monoid~$\SS^*$ to itself: the intuition is that $\nm(\ww)$ is the result of normalising~$\ww$, that is, $\nm(\ww)$ is the distinguished element in the equivalence class of~$\ww$. The normalisation automatically determines the associated monoid via the defining relations~$\ww = \nm(\ww)$, and we take it as our basic object of investigation. We call \emph{quadratic} a normalisation~$(\SS, \nm)$ such that a word~$\ww$ is $\nm$-normal (meaning $\nm(\ww) = \ww$) if, and only if, each length-two factor of~$\ww$ is $\nm$-normal, and such that one can go from~$\ww$ to~$\nm(\ww)$ by applying a finite sequence of shifted copies of the restriction~$\nmr$ of~$\nm$ to the set~$\Pow\SS2$ of length-two words. We then introduce, for every quadratic normalisation, a \emph{class}, which is a pair of positive integers describing the complexity of normalisation for length-three words: by definition, if~$\ww$ is a length-three word, $\nm(\ww)$ is equal to $\nmr_{\!\aii\mm}(\ww)$ or $\nmr_{\!\ai\mm}(\ww)$, meaning a length-$\mm$ sequence of alternate applications of~$\nmr$ in positions~$1$-$2$ and~$2$-$3$, and we say that the class is~$(\cc, \cc')$ if one always reaches the normal form after at most~$\cc$ steps when starting from the left, and~$\cc'$ steps from the right. We observe that the class, if not infinite, has the form~$(\cc, \cc')$ with $\vert \cc' - \cc \vert \le 1$, and that a system of class~$(\cc, \cc')$ is of class~$(\dd, \dd')$ for all $\dd \ge \cc$ and~$\dd' \ge \cc'$. We give a number of examples witnessing possible behaviours for the class and its analogue for the normalisation of longer words. However, most of our general results involve quadratic normalisations of class~$(4, 3)$ or~$(3, 4)$.

The first main result is an axiomatisation of normalisations of class $(4, 3)$ in terms of the restriction of the normalisation map to length-two words:

\begin{theoA}
If $(\SS, \nm)$ is a quadratic normalisation of class~$(4, 3)$, then the restriction~$\nmr$ of~$\nm$ to~$\Pow\SS2$ is idempotent and satisfies $\nmr_{\!212} = \nmr_{\!2121} = \nmr_{\!1212}$. Conversely, if $\ft$ is an idempotent map on~$\Pow\SS2$ that satisfies $\ft_{212} = \ft_{2121} = \ft_{1212}$, there exists a quadratic normalisation~$(\SS, \nm)$ of class~$(4, 3)$ satisfying~$\ft=\nmr$.
\end{theoA}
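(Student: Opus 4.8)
The two implications are of quite different character: the direct one follows quickly from the definitions, while the converse requires constructing~$\nm$ on all of~$\SS^*$ and carries the real content.

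For the direct implication, I would first note that, $\nm$ being idempotent and length-preserving, it maps~$\Pow\SS2$ into itself, so that $\nmr = \nm\vert_{\Pow\SS2}$ is well defined and inherits idempotence: for~$\uu$ in~$\Pow\SS2$ one has $\nmr(\nmr(\uu)) = \nm(\nm(\uu)) = \nm(\uu) = \nmr(\uu)$. Next, by the very definition of class~$(4, 3)$, every length-three word~$\ww$ satisfies $\nm(\ww) = \nmr_{212}(\ww) = \nmr_{1212}(\ww)$, giving $\nmr_{212} = \nmr_{1212}$ on~$\Pow\SS3$. Finally, $\nmr_{212}(\ww) = \nm(\ww)$ is $\nm$-normal, so, by quadraticity, its length-two factor in position~$1$-$2$ is fixed by~$\nmr$ and the extra application of~$\nmr$ there changes nothing: $\nmr_{2121}(\ww) = \nmr_1(\nmr_{212}(\ww)) = \nmr_{212}(\ww)$. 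The three announced equalities follow.

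For the converse I would take~$\ft$ as the primitive datum and recover~$\nm$ by $\ft$-normalisation. Call a word \emph{$\ft$-normal} when each of its length-two factors is fixed by~$\ft$, and introduce on~$\SS^*$ the rewriting relation~$\fl$ in which $\ww \fl \ww'$ whenever $\ww'$ arises from~$\ww$ by replacing one non-$\ft$-normal length-two factor~$\uu$ by~$\ft(\uu)$; its terminal words are exactly the $\ft$-normal ones. The goal is to prove~$\fl$ convergent and to set $\nm(\ww)$ to be the resulting normal form. Local confluence is where the hypothesis enters: two redexes in positions~$\ii$ and~$\jj$ with $\vert \ii - \jj \vert \ge 2$ are disjoint and commute, whereas two redexes with $\vert \ii - \jj \vert = 1$ overlap in a single length-three factor~$\uu$, and there the equalities $\ft_{212} = \ft_{2121} = \ft_{1212}$ say precisely that the word~$\ft_{212}(\uu)$ is $\ft$-normal and is reached by~$\fl^*$ from both one-step reducts~$\ft_1(\uu)$ and~$\ft_2(\uu)$; thus every critical pair is joinable and~$\fl$ is locally confluent.

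The main obstacle is termination, since every rule preserves length and no size measure decreases. I would prove it by induction on the length~$\nn$ of~$\ww$, using a head-stabilisation lemma: the coherence equalities, which at the length-three level force both factors to become~$\ft$-normal after the bounded sequence~$\ft_{212}$ and to stay so, can be leveraged to show that, along a suitable sweep, the first letter of~$\ww$ reaches its final value after finitely many steps, after which one normalises the shorter tail by the induction hypothesis; controlling how a modification made in position~$1$-$2$ propagates rightwards is the delicate point. Granting termination, Newman's lemma promotes local confluence to confluence, each word has a unique $\fl$-normal form, and declaring~$\nm(\ww)$ to be it yields an idempotent, length-preserving map with $\nm\vert_{\Pow\SS2} = \ft$ (by idempotence of~$\ft$) for which $\ft$-normality and $\nm$-normality coincide, so that $(\SS, \nm)$ is quadratic. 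Its class is~$(4, 3)$ directly from the hypothesis, since on a length-three word both~$\nmr_{212}$ and~$\nmr_{1212}$ return the normal form, i.e.\ at most four steps from the left and three from the right. I expect termination to be the sole genuine difficulty; local confluence and the remaining verifications are bookkeeping once~$\fl$ is in place.
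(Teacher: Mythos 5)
Your direct implication is correct and is essentially the paper's (it is Proposition~\ref{P:Class} specialised to class~$(4,3)$): idempotence of~$\nmr$ is inherited from~$\nm$, the definition of left-class~$4$ and right-class~$3$ gives $\nmr_{\!212}=\nmr_{\!1212}$ on~$\Pow\SS3$, and $\nmr_{\!2121}=\nmr_{\!212}$ because $\nmr_{\!212}(\ww)=\nm(\ww)$ is $\nm$-normal, hence $\nmr_{\!1}$-invariant by~\eqref{E:Quad1}. Your local-confluence analysis in the converse is also sound: $\ft_{212}(\uu)$ is fixed by~$\ft_1$ (by $\ft_{2121}=\ft_{212}$) and by~$\ft_2$ (by idempotence), and it is reached from both one-step reducts of a length-three overlap.

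The gap is termination, which you correctly single out as the crux but do not prove; the ``head-stabilisation lemma'' is only a hope, and not a harmless one. Termination of this rewriting system is exactly the content of the paper's Theorem~B (Proposition~\ref{P:43Terminating}), whose proof is genuinely delicate --- it yields the bound $2^\pp-\pp-1$ via a bookkeeping of VH- and VV-steps against a triangular grid --- and, as written, it presupposes that $(\SS,\nm)$ is already a quadratic normalisation of class~$(4,3)$, i.e.\ the object your construction is supposed to produce. That termination is not a soft consequence of the coherence relations is shown by Proposition~\ref{P:Termin44}: in class~$(4,4)$ the analogous system can cycle, so any argument must exploit the $(4,3)$ hypothesis in an essential way; in particular the first letter can keep changing arbitrarily late, since steps performed further right can feed new redexes back into position~$1$. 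The paper avoids the issue entirely: for the converse (Proposition~\ref{P:Axiom}) it needs only normalisation and a form of confluence, not termination, and it gets them by \emph{defining} $\nm$ on length-$\pp$ words as $\ft_{\delta_\pp}$ for the explicit universal sequence~$\delta_\pp$ of Notation~\ref{N:Iteration}, then proving algebraically, in a variant of Krammer's monoid, the relations $\delta_\pp\equiv\delta_\pp\sp\ii\equiv\ii\sp\delta_\pp$ (Lemma~\ref{L:Delta}); these translate into $\ft^*(\ft_\ii(\ww))=\ft^*(\ww)=\ft_\ii(\ft^*(\ww))$, which yield~\eqref{E:NormSys3}, quadraticity, and the class all at once. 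To salvage your route you must either supply a full Theorem~B-style termination proof phrased purely in terms of~$\ft$ and the domino rule (to avoid circularity), or replace termination by an explicit normalising strategy such as~$\delta_\pp$ --- at which point you have rediscovered the paper's argument.
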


The direct implication is easy and extends to all classes. But the converse direction is more delicate and does not extend: a map on length-two words normalising length-three words needs not normalise words of greater length. The proof of Theorem~A involves the monoid~$\MM_\pp$ studied in~\cite{KraArt} and~\cite{HeOz}, which is an asymmetric version of Artin's braid monoids where the relation $\sig2\sig1\sig2 = \sig1\sig2\sig1$ is replaced with $\sig2\sig1\sig2 = \sig1\sig2\sig1\sig2 = \sig2\sig1\sig2\sig1$. Let us mention that~\cite[Th.~3.4]{HeOz} is an analogue of Theorem~A for factorability structures.

The second main result involves termination. Every quadratic normalisation~$(\SS, \nm)$ gives rise to a quadratic rewriting system, namely the one with rules $\ww \to \nmr(\ww)$ for~$\ww$ a non-$\nm$-normal length-two word. By definition, this rewriting system is confluent and normalising, meaning that, for every initial word, there exists a finite sequence of rewriting steps leading to a unique $\nm$-normal word, but its convergence, meaning also that \emph{any} sequence of rewriting steps is finite, is a different question. We prove

\begin{theoB}
If $(\SS, \nm)$ is a quadratic normalisation of class~$(3,4)$ or~$(4,3)$, then the associated rewriting system is convergent, with every rewriting sequence starting from a length-$\pp$ word having length at most $2^\pp - \pp - 1$.
\end{theoB}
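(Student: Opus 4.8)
The plan is to reduce the statement to a purely combinatorial termination bound and prove it by induction on the length~$\pp$. First I would record two elementary facts valid for every quadratic normalisation. Since $\nmr$ is length-preserving, every word occurring in a rewriting sequence issued from a length-$\pp$ word again has length~$\pp$, so I may work inside $\Pow\SS\pp$ and speak of rewrites at the factor-positions $1,\dots,\pp-1$. Since $\nmr$ is idempotent (Theorem~A), a rewrite at position~$i$ produces an $\nmr$-normal factor there; hence no two consecutive steps occur at the same position, and, more precisely, between any two steps at position~$i$ there must lie a step at position~$i-1$ or~$i+1$, the only ones that can alter the two letters of the factor at~$i$. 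As confluence and the normalising property are already built into the definition of the rewriting system, proving \emph{convergence} amounts exactly to showing that every rewriting sequence is finite, the quantitative claim following once the length bound is in hand. Finally, the mirror map $\nm\mapsto\nm'$ with $\nm'(\ww)=\overline{\nm(\overline\ww)}$ (word reversal) is again a quadratic normalisation, exchanges the two coordinates of the class, and induces a length-preserving bijection on rewriting sequences; so it suffices to treat the class $(4,3)$ case.

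The target rewrites as $f(\pp):=2^\pp-\pp-1=\card\{A\subseteq\{1,\dots,\pp\}:\card A\ge 2\}$, which satisfies $f(1)=0$ and the equivalent recurrences $f(\pp)=2f(\pp-1)+(\pp-1)$ and $f(\pp)=f(\pp-1)+(2^{\pp-1}-1)$. I would prove by induction on~$\pp$ that any rewriting sequence from a length-$\pp$ word has at most $f(\pp)$ steps. The cases $\pp\le 2$ are immediate, and $\pp=3$ is exactly the hypothesis read through the consecutive-step fact: on three letters the steps strictly alternate between positions~$1$ and~$2$, hence form a prefix of the alternating pattern $\nmr_{1212}$ (if they start at position~$1$) or $\nmr_{212}$ (if they start at position~$2$), both of which reach the normal form after at most $4=f(3)$ steps by Theorem~A. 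For the inductive step I would single out the rightmost position $\pp-1$ and cut the sequence at the steps performed there: each resulting block uses only positions $1,\dots,\pp-2$, so it acts on the length-$(\pp-1)$ prefix and, by the induction hypothesis, has at most $f(\pp-1)$ steps.

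The heart of the matter, and the step I expect to be hardest, is to control the interaction across this cut: a step at position $\pp-1$ alters the letter $\ww_{\pp-1}$ and can therefore re-open position $\pp-2$, so the naive count $(r+1)f(\pp-1)+r$ (with $r$ the number of right-end steps) is useless, since $r$ is not bounded a priori. I would bound instead the re-activation itself, using the class $(4,3)$ hypothesis in the sharp form $\nmr_{212}=\nmr_{2121}=\nmr_{1212}$ of Theorem~A: this identity forces the three-letter behaviour straddling position~$\pp-1$ to stabilise after a bounded number of returns, so that right-end oscillations cannot indefinitely un-normalise the factor at~$\pp-2$; the accounting is to be arranged so that the surplus beyond $2f(\pp-1)$ is at most~$\pp-1$, yielding $f(\pp)=2f(\pp-1)+(\pp-1)$. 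The delicate feature is precisely that propagation is genuinely two-sided, so no weight monotone in a single direction can work; the class $(4,3)$ identity is exactly what excludes the two-sided runaway that, for higher classes, produces non-terminating sequences (consistent with the examples announced in the introduction). Assembling the block bounds with this interaction estimate gives $f(\pp)=2^\pp-\pp-1$, and the class $(3,4)$ case follows by the mirror symmetry of the first paragraph.
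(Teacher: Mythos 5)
Your reduction to the class~$(4,3)$ case by word reversal is sound, your base cases are correct, and you have correctly identified where the difficulty lies --- but that is also where the proof stops. The entire content of the theorem is the ``interaction estimate'' that you describe only as something ``to be arranged so that the surplus beyond $2f(\pp-1)$ is at most $\pp-1$''. No mechanism is given for why the number~$r$ of steps at position~$\pp-1$ is controlled, nor why the identity $\nmr_{\!212}=\nmr_{\!2121}=\nmr_{\!1212}$ prevents the right-end oscillation from re-opening position~$\pp-2$ arbitrarily often. The obstruction is real: counting steps by the position at which they occur only yields circular inequalities of the form $N_\ii\le 1+N_{\ii-1}+N_{\ii+1}$ (a step at position~$\ii$ can only be repeated after a step at~$\ii-1$ or~$\ii+1$ has disturbed that factor), and nothing in your setup breaks the circularity. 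So this is a genuine gap, not a detail to be filled in.

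The paper breaks the circularity differently, and the device is worth knowing: each word~$\ww_\kk$ in the rewriting sequence is enriched to a word over $\SS\amalg\overline\SS$ by marking each letter as ``vertical'' or ``horizontal'', which identifies~$\ww_\kk$ with a path in the triangular grid of Figure~\ref{F:Grid}. The invariant maintained --- and this is exactly where the domino rule, i.e.\ class~$(4,3)$, enters --- is that any length-two factor whose first letter is horizontal is already $\nm$-normal. This rules out rewriting steps of type HV and HH, so every step is either a VH-step, which pushes the path across one of the $\pp(\pp-1)/2$ squares of the grid and hence can happen at most $\pp(\pp-1)/2$ times in total, or a VV-step in some column~$\jj$, and the $\VVV\jj$-steps project onto a rewriting sequence on a word of length at most~$\pp-\jj$, hence number at most $\FF(\pp-\jj)$ by induction. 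This yields the recurrence $\FF(\pp)\le\pp(\pp-1)/2+\FF(\pp-1)+\pdots+\FF(2)$, which solves to $2^\pp-\pp-1$; note that the induction calls on \emph{all} smaller lengths, not just $\pp-1$, so even the recurrence $f(\pp)=2f(\pp-1)+(\pp-1)$ you aim for is not the one the argument naturally produces. If you want to complete your own route, you would need to substitute some analogue of the horizontal/vertical marking to obtain a one-sided invariant; the class~$(4,3)$ identity alone, applied locally at the cut position, does not suffice.
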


The result can be compared with the easier result that, in class~$(3, 3)$, every rewriting sequence starting from a length-$\pp$ word has length at most $\pp(\pp-1)/2$, and it is optimal, in the sense that there exists a nonconvergent rewriting system of class~$(4, 4)$. The proof of Theorem~B is delicate and relies on a diagrammatic tool called the domino rule. Theorem~B exhibits a strong difference between the factorability structures of~\cite{HeOz} and normalisations of class~$(4,3)$, since the former can induce nonterminating rewriting systems, as witnessed by the counter-example of~\cite[Appendix, Prop.~7]{HeOz}. However, there is a connection between Theorem~B and \cite[Th.~7.3]{HeOz}, which states termination in the case of a factorability structure that obeys the domino rule, hence, as a normalisation, is of class~$(4,3)$. The arguments are different, and it is not clear how restrictive it is for a normalisation of class~$(4,3)$ to be associated with a factorability structure.

As mentioned above, Garside normalisation~\cite{Garside} integrates into quadratic normalisations, more precisely normalisations of class~$(3, 3)$ in the case of a bounded Garside family, and of class~$(4, 3)$ in the general case. It is natural to ask for a characterisation of Garside systems inside the family of all normalisations of class~$(4, 3)$. This is the last one of our main results:

\begin{theoC}
Call a normalisation~$(\SS,\nm)$ left-weighted if, for all~$\ss,\tt$ in~$\SS$, the element~$\ss$ left-divides the first entry of~$\nm(\ss\sep\tt)$ in the associated monoid. Then, for every normalisation $(\SS, \nm)$ such that the associated monoid~$\MM$ is left-cancellative and contains no nontrivial invertible element, the family~$\SS$ is a Garside family in~$\MM$ and $(\SS, \nm)$ is the derived normalisation if, and only if, $(\SS, \nm)$ is of class~$(4, 3)$ and left-weighted.
\end{theoC}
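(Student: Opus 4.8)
The plan is to prove the two implications separately, the forward one being essentially a consequence of the standard theory of Garside families and the backward one carrying the real content. For the forward implication, assume $\SS$ is a Garside family in $\MM$ and $\nm$ is the derived normalisation, so that $\nm(\ww)$ is the greedy ($\SS$-normal) decomposition of the element represented by~$\ww$. That $(\SS,\nm)$ is quadratic of class~$(4,3)$ is the fact, already recorded in the introduction, that Garside normalisation sits inside quadratic normalisations, of class~$(4,3)$ in the general (non-bounded) case. For left-weightedness, write $\nm(\ss\sep\tt)=\ss'\sep\tt'$; by definition of the greedy decomposition the first entry~$\ss'$ is the $\SS$-head of~$\ss\tt$, that is, the $\dive$-greatest left divisor of~$\ss\tt$ lying in~$\SS$. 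Since $\ss$ itself belongs to~$\SS$ and satisfies $\ss\dive\ss\tt$, maximality of the head gives $\ss\dive\ss'$, which is exactly left-weightedness.

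For the backward implication, assume $(\SS,\nm)$ is of class~$(4,3)$ and left-weighted, with $\MM$ left-cancellative and free of nontrivial invertible elements. I would reduce the statement to a single assertion about length-two words: \emph{if $\ss'\sep\tt'$ is $\nm$-normal, then $\ss'$ is the $\SS$-head of~$\ss'\tt'$, i.e. the pair $(\ss',\tt')$ is $\SS$-greedy.} Granting this claim, the conclusion follows quickly. Given $g\in\MM$, pick any word~$\ww$ over~$\SS$ representing~$g$; since $\nm$ is quadratic, $\nm(\ww)$ is $\nm$-normal if, and only if, all its length-two factors are, so $\nm(\ww)=\tt_1\sep\cdots\sep\tt_\pp$ with each $(\tt_i,\tt_{i+1})$ being $\nm$-normal, hence, by the claim, $\SS$-greedy. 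Thus $\nm(\ww)$ is an $\SS$-normal decomposition of~$g$; as $g$ is arbitrary and $\SS$ generates~$\MM$ by construction of the associated monoid, $\SS$ is a Garside family. Moreover $\SS$-normal decompositions are unique in the absence of nontrivial invertible elements, so the idempotent length-preserving map~$\nm$ must coincide with the greedy normal form, which says precisely that $(\SS,\nm)$ is the derived normalisation.

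It remains to prove the claim, and this is where I expect the main obstacle. Left-weightedness yields one inclusion easily: writing $\nm(\ss\sep\tt)=\ss'\sep\tt'$, the head candidate~$\ss'$ lies in~$\SS$, divides~$\ss\tt$, and by left-cancellation equals $\ss\,\uu$ for some $\uu\dive\tt$, so it is at least an $\SS$-left-divisor of~$\ss\tt$ containing~$\ss$. The hard direction is \emph{maximality}: for $\rr\in\SS$ with $\rr\dive\ss'\tt'$ one must prove $\rr\dive\ss'$, and this genuinely uses data beyond two letters, since the quantifier ranges over all of~$\SS$ while $\nm$ only controls two- and three-letter words over~$\SS$. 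I would fix a factorisation witnessing $\ss'\tt'=\rr\,v$, decompose~$v$ over~$\SS$, and read the equality as a two-row rewriting diagram whose top row is the $\nm$-normal pair $(\ss',\tt')$ and whose left edge is~$\rr$. The domino rule, the diagrammatic device behind Theorem~B, then propagates $\nm$-normality across the diagram one cell at a time, while the class-$(4,3)$ identity $\nmr_{212}=\nmr_{2121}=\nmr_{1212}$ of Theorem~A guarantees that the two ways of normalising each length-three corner agree; inducting on the number of cells forces~$\rr$ to left-divide~$\ss'$. The delicate point is organising this induction so that left-cancellativity and the absence of nontrivial invertible elements are invoked exactly where the domino diagrams require them, ensuring both that the diagram closes up and that the resulting divisibility is genuine rather than holding only up to an invertible factor.
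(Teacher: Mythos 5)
Your forward implication is correct and matches the paper: class~$(4,3)$ is Proposition~\ref{P:GarClass}, and your head-maximality argument for left-weightedness is essentially Lemma~\ref{L:LeftWeighted}.

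The backward implication has a genuine gap, and it is a gap of logic rather than of detail. You reduce everything to the claim that an $\nm$-normal pair $\ss'\sep\tt'$ has $\ss'$ equal to the $\SS$-head of $\ss'\tt'$, i.e.\ that every $\rr\in\SS$ with $\rr\dive\ss'\tt'$ satisfies $\rr\dive\ss'$. But Definition~\ref{D:Greedy} of $\SS$-normality quantifies over an arbitrary prefix $\ff\in\MM$: one needs $\rr\dive\ff\ss'\tt'\Rightarrow\rr\dive\ff\ss'$. The head property is strictly weaker in general, and the equivalence of the two (``one can skip the term $\ff$'', \cite[Cor.~IV.1.31]{Garside}) is only available \emph{after} $\SS$ is known to be a Garside family --- which is exactly what you are trying to prove. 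So your order of deduction (first greedy pairs, then Garside family) is circular as written. The paper breaks the circle by first proving that $\SS$ is closed under right-divisors in~$\MM$ (Lemma~\ref{L:RDClosed}, a nontrivial induction using the domino rule, $\simeq$-uniqueness of normal forms, and the absence of nontrivial invertible elements); together with $1\in\SS$ this makes $\SS$ \emph{solid}, and for solid generating families the existence of a head function $\HH$ with the property of Lemma~\ref{L:Head} certifies that $\SS$ is a Garside family by \cite[Prop.~IV.2.7]{Garside}. Only then does the head property of $\nm$-normal pairs upgrade to full $\SS$-greediness. Your proposal contains no analogue of the right-divisor-closure step, and without it (or some substitute) the claim you isolate does not yield the conclusion.

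On the claim itself, your sketch (two-row diagrams, induction on cells) is vaguer than necessary: the paper's Lemma~\ref{L:Head} gets it in one stroke. Given $\gg=\tt\hh$ with $\tt\in\SS$, take the $\nm$-normal decomposition $\ss_1\sep\pdots\sep\ss_\pp$ of~$\hh$; by the domino rule (valid in class~$(4,3)$ by Lemma~\ref{L:Domino}) the word obtained by pushing $\tt$ through it is the $\nm$-normal decomposition of~$\gg$, and left-weightedness applied to the single square $\nm(\tt\sep\ss_1)$ gives $\tt\dive\HH(\gg)$ directly. That part of your plan is recoverable; the missing solidity argument is not.
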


The proof relies on nontrivial properties of Garside families and, again, on the domino rule available in class~$(4, 3)$. A consequence of Theorems~B and~C is that the rewriting system derived from a Garside family is always convergent, which generalises the case of Artin--Tits monoids with the elements of the corresponding Coxeter group as generators~\cite[Th~3.1.3,Prop.~3.2.1]{GaussentGuiraudMalbos}.

The paper is organised in five sections after this one. Section~\ref{S:Normal} contains basic definitions about normal forms and normalisations in the general case. We explain how the adjunction of a dummy generator with specific properties extends the use of length-preserving normalisations to non-graded monoids. In Section~\ref{S:Quadratic}, we introduce quadratic normalisations as those normalisations whose map is determined by its restriction to length-two words, and we establish a bijective correspondence between the latter and a generalisation of convergent rewriting systems (with termination relaxed into normalisation). We also introduce the class, and its generalisation the $\pp$-class, as  measures of the complexity of normalisation, and establish their basic properties. In particular, we give counterexamples showing the independence of the $3$-class and of the $\pp$-class for $\pp \ge 4$. Section~\ref{S:Class3} is devoted to the specific case of quadratic normalisations of class~$(4,3)$. Such systems provide well-behaved normalisation processes; we establish in particular an explicit universal formula for the normalisation of length-$\pp$ words and, as an application, we show that being of class~$(4,3)$ implies being of $\pp$-class~$(4,3)$ for every~$\pp$. The section ends with Theorem~A. In Section~\ref{S:Termination}, we study the relationship between the class of a quadratic normalisation and the termination of the associated rewriting system, proving in particular Theorem~B. Finally, Section~\ref{S:Garside} is devoted to the connection with Garside families and the associated greedy normal forms, establishing Theorem~C. 

Note that almost all observations in this paper extend from the context of monoids to that of categories, seen as monoids with a partially defined product.

\subsection*{Thanks}
The authors warmly thank Viktoriya Ozornova for having sent a number of useful comments about the first version of this paper and shared her view of the subject.

\section{Normalisations and geodesic normal forms}\label{S:Normal}

In this introductory section, we define normalisations and connect them with geodesic normal forms of monoids (Subsection~\ref{SS:Normal}). We explain how to add a ``dummy'' generator to make the restriction to length-preserving maps innocuous (Subsection~\ref{SS:NonGraded}). 

\subsection{Normalisations}\label{SS:Normal}

If $\SS$ is a set, we denote by~$\SS^*$ the free monoid over~$\SS$ and call its elements \emph{$\SS$-words}, or simply \emph{words}. We write $\LG\ww$ for the length of an $\SS$-word~$\ww$, and $\ww \sep \ww'$, or simply~$\ww\ww'$, for the product of two $\SS$-words~$\ww$ and~$\ww'$.

Our aim is to investigate normal forms of a monoid~$\MM$ with respect to a generating family~$\SS$, that is, maps from~$\MM$ to~$\SS^*$ that choose, for every element~$\gg$ of~$\MM$, a distinguished expression of~$\gg$ by an $\SS$-word, or, equivalently, maps from~$\SS^*$ to itself that choose a distinguished element in each equivalence class. We shall privilege the latter approach, in which the primary object is the word map and the monoid is then derived from it. 

\begin{defi}\label{D:NormSys}
A \emph{normalisation} is a pair $(\SS,\nm)$, where~$\SS$ is a set and~$\nm$ is a map from~$\SS^*$ to itself satisfying, for all $\SS$-words~$\uu, \vv, \ww$,
\begin{gather}
\label{E:NormSys1}
\LG{\nm(\ww)} = \LG\ww, \\
\label{E:NormSys2}
\LG\ww = 1 \text{\ implies\ } \nm(\ww) = \ww, \\
\label{E:NormSys3}
\nm(\uu \sep \nm(\ww) \sep \vv)=\nm(\uu \sep \ww \sep \vv).
\end{gather}
An $\SS$-word~$\ww$ satisfying $\nm(\ww)=\ww$ is called \emph{$\nm$-normal}. If~$\MM$ is a monoid, we say that $(\SS, \nm)$ is a normalisation \emph{for~$\MM$} if~$\MM$ admits the presentation 
\begin{equation}\label{E:NormSys4}
\PRESp{\SS}{\{\ww = \nm(\ww) \mid \ww \in \SS^*\}}.
\end{equation}
\end{defi}

Note that~\eqref{E:NormSys3} implies that~$\nm$ is idempotent. The homogeneity condition~\eqref{E:NormSys1} is discussed (and partly skirted around) in Subsection~\ref{SS:NonGraded}.

\begin{exam}\label{X:Abelian}
Assume that $\SS$ is a set and $<$ is a linear order on~$\SS$. For~$\ww$ in~$\SS^*$, define~$\nm(\ww)$ to be the $\lLex$-minimal word obtained by permuting letters in~$\ww$, where~$\lLex$ is the lexicographic extension of~$<$ to~$\SS^*$. So, for instance, assuming $\tta, \ttb, \ttc \in \SS$ and $\tta < \ttb < \ttc$, we find $\nm(\ttb\ttc\tta\ttb\tta\ttc) = \tta\tta\ttb\ttb\ttc\ttc$. Then $(\SS, \nm)$ is a normalisation for the free commutative monoid~$\NNNN^{(\SS)}$ over~$\SS$. 
\end{exam}

The following fact is a direct consequence of the definition:

\begin{lemm}\label{L:Graded}
If $(\SS, \nm)$ is a normalisation for a monoid~$\MM$, then~$\MM$ admits a graduation such that all elements of~$\SS$ have degree one, that is, there exists a morphism~$\dd : \MM \to (\NNNN, +)$ such that $\ss \in \SS$ implies $\dd(\ss) = 1$.
\end{lemm}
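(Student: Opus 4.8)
The plan is to produce the grading directly from the length function, exploiting the homogeneity condition~\eqref{E:NormSys1}. First I would observe that the length map $\ww \mapsto \LG\ww$ is a monoid morphism from the free monoid~$\SS^*$ to~$(\NNNN, +)$, since length is additive under concatenation, and that it sends every generator~$\ss$ of~$\SS$ to~$1$. The whole task then reduces to descending this tautological morphism through the presentation~\eqref{E:NormSys4} defining~$\MM$.

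The cleanest way to carry this out is to consider the binary relation on~$\SS^*$ given by ``having equal length'', that is, the kernel congruence $\{(\uu,\vv) \mid \LG\uu = \LG\vv\}$ of the length morphism. Being the kernel of a morphism, this relation is a congruence on~$\SS^*$. By condition~\eqref{E:NormSys1} it contains every defining pair~$(\ww, \nm(\ww))$, hence it contains the congruence~$\equiv$ that these pairs generate, since $\equiv$ is by definition the smallest such congruence. In other words, the length map is constant on each $\equiv$-class, the key computation being simply $\LG{\uu \sep \ww \sep \vv} = \LG\uu + \LG\ww + \LG\vv = \LG\uu + \LG{\nm(\ww)} + \LG\vv = \LG{\uu \sep \nm(\ww) \sep \vv}$, so that rewriting a factor~$\ww$ into~$\nm(\ww)$ inside any word leaves the total length unchanged.

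From there I would invoke the universal property of the presentation: writing $\pi : \SS^* \to \MM$ for the canonical projection, the fact that the length morphism factors through~$\pi$ yields a unique morphism $\dd : \MM \to (\NNNN, +)$ with $\dd \comp \pi = \LG{\cdot}$. Evaluating on a generator gives $\dd(\ss) = \LG\ss = 1$ for every~$\ss$ in~$\SS$, which is exactly the asserted graduation.

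There is no genuine obstacle here; the entire content is the remark that~\eqref{E:NormSys1} makes the presentation~\eqref{E:NormSys4} length-homogeneous, so the length morphism survives the quotient. The only point meriting attention is that compatibility must be checked with the full congruence~$\equiv$ rather than merely with the basic relations~$\ww = \nm(\ww)$; phrasing the argument through the kernel congruence, as above, handles this automatically, since any congruence containing the defining pairs contains~$\equiv$.
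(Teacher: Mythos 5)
Your proposal is correct and is essentially the paper's own argument: the paper simply notes that by~\eqref{E:NormSys1} all $\SS$-words representing a given element of~$\MM$ have the same length and defines~$\dd(\gg)$ to be that common length, which is exactly your observation that the length morphism on~$\SS^*$ is compatible with the congruence generated by the pairs~$(\ww, \nm(\ww))$ and hence descends to~$\MM$. Your version merely makes the universal-property bookkeeping explicit.
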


\begin{proof}
For~$\gg$ in~$\MM$, all the $\SS$-words representing~$\gg$ must have the same length by~\eqref{E:NormSys1}: define $\dd(\gg)$ to be this common length. 
\end{proof}

The following result connects Definition~\ref{D:NormSys} with the alternative approach in which the monoid is given first. If a monoid~$\MM$ is generated by a set~$\SS$, we denote by~$\ev$ the canonical projection from~$\SS^*$ to~$\MM$. 

\begin{lemm}\label{L:NormSysCharacterisation}
Assume that~$\MM$ is a monoid and~$\SS$ is a generating subfamily of~$\MM$. If~$\nm$ is a length-preserving map from~$\SS^*$ to itself, then $(\SS,\nm)$ is a normalisation for~$\MM$ if, and only if, for all $\SS$-words~$\ww, \ww'$, the following conditions hold:
\begin{gather}
\label{E:NormMap1}
\ev(\nm(\ww)) = \ev(\ww),\\
\label{E:NormMap2}
\ev(\ww) = \ev(\ww') \text{ implies } \nm(\ww) = \nm(\ww').
\end{gather}
\end{lemm}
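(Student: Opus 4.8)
The plan is to prove both directions by unwinding the definition of the presentation~\eqref{E:NormSys4} and relating it to the canonical projection~$\ev$.

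For the forward direction, I assume $(\SS,\nm)$ is a normalisation for~$\MM$, so $\MM$ has the presentation $\PRESp{\SS}{\{\ww=\nm(\ww)\}}$. This means $\ev$ factors through the congruence~$\equiv$ generated by the defining relations, and in fact~$\MM \cong \SS^*/{\equiv}$ with $\ev$ being the quotient map. Condition~\eqref{E:NormMap1} is then immediate: since $\ww=\nm(\ww)$ is one of the defining relations, we have $\ww \equiv \nm(\ww)$, hence $\ev(\ww)=\ev(\nm(\ww))$. For~\eqref{E:NormMap2}, suppose $\ev(\ww)=\ev(\ww')$, i.e.\ $\ww \equiv \ww'$. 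I would argue that for \emph{every} pair related by a single defining relation, the two words have the same $\nm$-image, using~\eqref{E:NormSys3}: if $\uu\ww_0\vv$ and $\uu\nm(\ww_0)\vv$ are the two sides of an elementary relation applied inside context $\uu,\vv$, then $\nm(\uu\ww_0\vv)=\nm(\uu\nm(\ww_0)\vv)$ by~\eqref{E:NormSys3}. Since $\equiv$ is the congruence generated by such single steps (and is symmetric/transitive), $\nm$ is constant on each $\equiv$-class, giving $\nm(\ww)=\nm(\ww')$.

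For the converse, I assume $\nm$ is length-preserving and satisfies~\eqref{E:NormMap1}--\eqref{E:NormMap2}, and I must show $\MM$ has the presentation~\eqref{E:NormSys4}. Equivalently, writing $\approx$ for the congruence on $\SS^*$ generated by the relations $\{\ww=\nm(\ww)\}$, I must show that $\approx$ coincides with $\ker(\ev)$, the congruence $\ww\sim\ww'\iff\ev(\ww)=\ev(\ww')$ (this suffices because $\SS$ generates $\MM$, so $\ev$ is surjective and $\MM\cong\SS^*/\ker(\ev)$). The inclusion $\approx\ \subseteq\ \ker(\ev)$ follows from~\eqref{E:NormMap1}: each generating relation $\ww=\nm(\ww)$ is $\ev$-preserving, and $\ker(\ev)$ is a congruence, so it contains the congruence generated by these relations. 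For the reverse inclusion $\ker(\ev)\subseteq\ \approx$, suppose $\ev(\ww)=\ev(\ww')$; then~\eqref{E:NormMap2} gives $\nm(\ww)=\nm(\ww')$, while each of $\ww$ and $\ww'$ is $\approx$-related to its own normal form via a defining relation, so $\ww\approx\nm(\ww)=\nm(\ww')\approx\ww'$, whence $\ww\approx\ww'$.

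I expect the main subtlety to be bookkeeping about what the presentation~\eqref{E:NormSys4} formally asserts, namely the identification $\MM\cong\SS^*/{\approx}$ compatibly with the generators; once that is pinned down, both inclusions are short. The only genuinely delicate point is the forward direction's claim that $\nm$ is constant on $\equiv$-classes: this requires that a single rewrite step inside an arbitrary context preserves the $\nm$-image, which is exactly the content of~\eqref{E:NormSys3}, and then an induction on the length of a derivation $\ww\equiv\ww'$ together with the idempotence of~$\nm$ (noted after Definition~\ref{D:NormSys}) closes the argument.
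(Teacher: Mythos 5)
Your forward direction is essentially the paper's own argument: \eqref{E:NormMap1} because each defining relation of~\eqref{E:NormSys4} holds in~$\MM$, and \eqref{E:NormMap2} because a single elementary rewrite $\uu\sep\ww_0\sep\vv\leftrightarrow\uu\sep\nm(\ww_0)\sep\vv$ preserves the $\nm$-image by~\eqref{E:NormSys3}, so $\nm$ is constant on $\equiv$-classes. That part is fine.

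The converse direction has a genuine gap: you prove only that $\MM$ admits the presentation~\eqref{E:NormSys4}, but the conclusion ``$(\SS,\nm)$ is a normalisation for~$\MM$'' also requires, by Definition~\ref{D:NormSys}, that $(\SS,\nm)$ be a normalisation in the first place, i.e.\ that it satisfy \eqref{E:NormSys2} and~\eqref{E:NormSys3}. Neither is part of your hypotheses (you only assume $\nm$ length-preserving plus \eqref{E:NormMap1}--\eqref{E:NormMap2}), and neither is addressed in your argument; moreover your identification of $\approx$ with $\ker(\ev)$ implicitly uses that the generating relations can be applied inside arbitrary contexts, which is unproblematic, but the axioms themselves still need checking. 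Both do follow from your hypotheses, and the paper supplies the verification: for \eqref{E:NormSys2}, length-preservation gives $\nm(\ss)\in\SS$ for $\ss\in\SS$, and since $\SS\subseteq\MM$ forces $\ev(\ss)=\ss$ and $\ev(\nm(\ss))=\nm(\ss)$, condition \eqref{E:NormMap1} yields $\nm(\ss)=\ss$; for \eqref{E:NormSys3}, multiplicativity of~$\ev$ together with \eqref{E:NormMap1} gives $\ev(\uu\sep\nm(\ww)\sep\vv)=\ev(\uu\sep\ww\sep\vv)$, and then \eqref{E:NormMap2} gives $\nm(\uu\sep\nm(\ww)\sep\vv)=\nm(\uu\sep\ww\sep\vv)$. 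With these two short checks inserted, your converse (whose presentation part, the two inclusions $\approx\ \subseteq\ \ker(\ev)$ and $\ker(\ev)\subseteq\ \approx$, matches the paper's closing argument) is complete.
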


\begin{proof}
Assume that $(\SS, \nm)$ is a normalisation for~$\MM$. As~\eqref{E:NormSys4} is a presentation of~$\MM$, each relation $\nm(\ww) = \ww$ is valid in~$\MM$ and, therefore, \eqref{E:NormMap1} holds. Next, assume that $\ww, \ww'$ are $\SS$-words satisfying $\ev(\ww) = \ev(\ww')$. As~\eqref{E:NormSys4} is a presentation of~$\MM$, \eqref{E:NormMap2} follows from $\nm(\uu\sep\ww\sep\vv)=\nm(\uu\sep\nm(\ww)\sep\vv)$, which is~\eqref{E:NormSys3}.

Conversely, assume that~\eqref{E:NormMap1} and~\eqref{E:NormMap2} are satisfied. By assumption on~$\nm$, \eqref{E:NormSys1} is satisfied and, for $\ss$ in~$\SS$, we have $\nm(\ss)\in\SS$, so that $\SS\subseteq\MM$ implies $\ev(\ss) = \ss$ and $\ev(\nm(\ss)) = \nm(\ss)$, whence~\eqref{E:NormSys2} by~\eqref{E:NormMap1}. Then, for $\SS$-words $\uu, \vv, \ww$, we have $\ev(\uu\sep\nm(\ww)\sep\vv) = \ev(\uu\sep\ww\sep\vv)$ by~\eqref{E:NormMap1}, whence~\eqref{E:NormSys3} by~\eqref{E:NormMap2}. So~$(\SS, \nm)$ is a normalisation. Finally, \eqref{E:NormSys4} is a presentation of~$\MM$ because, on the one hand, all relations $\nm(\ww) = \ww$ are valid in~$\MM$ by~\eqref{E:NormMap1} and, on the other hand, $\ev(\ww) = \ev(\ww')$ implies $\nm(\ww) = \nm(\ww')$ by~\eqref{E:NormMap2}, hence~\eqref{E:NormMap1} implies that~$\ww$ and~$\ww'$ are equivalent to~$\nm(\ww)$ modulo the relations of~\eqref{E:NormSys4}.
\end{proof}

We now connect normalisations with the usual notion of a normal form.

\begin{defi}\label{D:NF}
If $\MM$ is a monoid and $\SS$ is a generating subfamily of~$\MM$, a \emph{normal form on $(\MM,\SS)$} is a (set-theoretic) section of the canonical projection~$\ev$ of~$\SS^*$ onto~$\MM$. A normal form~$\NF$ on $(\MM,\SS)$ is called \emph{geodesic} if, for every~$\gg$ in~$\MM$, we have $\LG{\NF(\gg)} \le \LG\ww$ for every $\SS$-word~$\ww$ representing~$\gg$. 
\end{defi}

For graded monoids, normalisations are equivalent to normal forms:

\begin{prop}\label{P:NF}
\ITEM1 If $(\SS,\nm)$ is a normalisation for a monoid~$\MM$, we obtain a normal form on~$(\MM,\SS)$ by putting 
\begin{equation}\label{E:NF1}
\NF(\gg) = \nm(\ww), \quad\text{where $\ww$ is any representative of~$\gg$}.
\end{equation}

\ITEM2 Conversely, assume that $\MM$ is a graded monoid, $\SS$ is a generating subfamily of~$\MM$ whose elements have degree~$1$, and $\NF$ is normal form on~$(\MM,\SS)$. Then we obtain a normalisation~$(S,\nm)$ for~$\MM$ by putting 
\begin{equation}\label{E:NF2}
\nm(\ww) = \NF(\ev(\ww)).
\end{equation}

\ITEM3 The correspondences of~\ITEM1 and~\ITEM2 are inverses of one another.
\end{prop}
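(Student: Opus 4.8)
The plan is to prove the three parts of Proposition~\ref{P:NF} more or less in the order stated, verifying in each case that the maps produced actually satisfy the required axioms, and finally checking that the two constructions are mutually inverse. Throughout I would use Lemma~\ref{L:NormSysCharacterisation} as the main bridge, since it reduces the verification that a length-preserving map is a normalisation to the two conditions~\eqref{E:NormMap1} and~\eqref{E:NormMap2}, both phrased directly in terms of~$\ev$.

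For part~\ITEM1, I first need to check that~\eqref{E:NF1} is well-defined, i.e.\ independent of the chosen representative~$\ww$ of~$\gg$. This is exactly where condition~\eqref{E:NormMap2} (equivalently, the original axiom~\eqref{E:NormSys3}) is used: if~$\ww$ and~$\ww'$ both represent~$\gg$, then $\ev(\ww)=\ev(\ww')$, so $\nm(\ww)=\nm(\ww')$, and $\NF(\gg)$ is unambiguous. Then I must show $\NF$ is a section of~$\ev$, that is, $\ev(\NF(\gg))=\gg$ for all~$\gg$; this follows from $\ev(\nm(\ww))=\ev(\ww)$, which is~\eqref{E:NormMap1}. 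Here I would invoke Lemma~\ref{L:NormSysCharacterisation} to guarantee that~\eqref{E:NormMap1} and~\eqref{E:NormMap2} hold, given that $(\SS,\nm)$ is a normalisation for~$\MM$.

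For part~\ITEM2, conversely, I must verify that the map $\nm(\ww)=\NF(\ev(\ww))$ is genuinely a normalisation for~$\MM$. Length-preservation~\eqref{E:NormSys1} is where the grading hypothesis enters: since the elements of~$\SS$ have degree~$1$, every $\SS$-word~$\ww$ has $\dd(\ev(\ww))=\LG\ww$, and likewise $\LG{\NF(\ev(\ww))}=\dd(\ev(\ww))$ because $\NF(\ev(\ww))$ is a word representing the same element of degree $\dd(\ev(\ww))$; hence $\LG{\nm(\ww)}=\LG\ww$. With length-preservation in hand, I would again apply Lemma~\ref{L:NormSysCharacterisation}: condition~\eqref{E:NormMap1} holds because $\ev(\nm(\ww))=\ev(\NF(\ev(\ww)))=\ev(\ww)$ using that~$\NF$ is a section of~$\ev$, and condition~\eqref{E:NormMap2} holds trivially since $\ev(\ww)=\ev(\ww')$ forces $\NF(\ev(\ww))=\NF(\ev(\ww'))$, i.e.\ $\nm(\ww)=\nm(\ww')$. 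Thus $(\SS,\nm)$ is a normalisation for~$\MM$.

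For part~\ITEM3, I would compute both round-trips. Starting from a normalisation $(\SS,\nm)$, forming~$\NF$ by~\eqref{E:NF1} and then~$\nm'$ by~\eqref{E:NF2} gives $\nm'(\ww)=\NF(\ev(\ww))=\nm(\ww)$ (taking $\ww$ itself as the representative of $\ev(\ww)$), so $\nm'=\nm$. Starting from a normal form~$\NF$, forming~$\nm$ by~\eqref{E:NF2} and then~$\NF'$ by~\eqref{E:NF1} gives $\NF'(\gg)=\nm(\ww)=\NF(\ev(\ww))=\NF(\gg)$ for any representative~$\ww$ of~$\gg$, so $\NF'=\NF$. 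The main obstacle, such as it is, is conceptual rather than technical: one must keep straight the two descriptions of the grading in part~\ITEM2 and confirm that the degree of an element coincides with the length of any of its representatives (Lemma~\ref{L:Graded} already records the forward half of this for normalisations, and the grading hypothesis supplies the converse here). Everything else is a matter of unwinding definitions against Lemma~\ref{L:NormSysCharacterisation}.
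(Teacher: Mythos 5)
Your proposal is correct and follows essentially the same route as the paper: both parts \ITEM1 and \ITEM2 are reduced to the conditions~\eqref{E:NormMap1} and~\eqref{E:NormMap2} of Lemma~\ref{L:NormSysCharacterisation}, with the grading hypothesis supplying length-preservation in~\ITEM2, and \ITEM3 is the same two round-trip computations. The only quibble is the parenthetical claiming~\eqref{E:NormMap2} is ``equivalently'' the axiom~\eqref{E:NormSys3} — it is rather a consequence of the characterisation lemma — but your actual argument invokes the lemma correctly, so nothing is affected.
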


\begin{proof}
\ITEM1 First the definition makes sense, since, if $\ww, \ww'$ are two representatives of~$\gg$, then \eqref{E:NormMap2} implies $\nm(\ww) = \nm(\ww')$. Next, assuming $\ev(\ww) = \gg$, we obtain $\ev(\NF(\gg)) = \ev(\nm(\ww)) = \ev(\ww) = \gg$ using~\eqref{E:NormMap1}, so~$\NF$ is a section of~$\ev$. 

\ITEM2 The assumption that~$\MM$ is graded implies $\LG{\nm(\ww)} = \LG{\ww}$ for every~$\SS$-word~$\ww$. Then, \eqref{E:NF2} implies $\ev(\nm(\ww)) = \ev(\NF(\ev(\ww))) = \ev(\ww)$ because $\NF$ is a section of~$\ev$, so~\eqref{E:NormMap1} holds. Finally, $\ev(\ww)=\ev(\ww')$ implies $\NF(\ev(\ww))=\NF(\ev(\ww'))$, whence~\eqref{E:NormMap2}. So, by Lemma~\ref{L:NormSysCharacterisation}, $(\SS,\nm)$ is a normalisation for~$\MM$.

\ITEM3 If~$(\SS,\nm)$ is a normalisation for~$\MM$, and~$\NF$ is defined by~\eqref{E:NF1} and~$\nm'$ by~\eqref{E:NF2}, then $\nm'(\ww) = \NF(\ev(\ww)) = \nm(\ww)$ holds, since~$\ww$ is a representative of~$\ev(\ww)$. Conversely, if~$\NF$ is a normal form on~$(\MM,\SS)$, and~$\nm$ is defined by~\eqref{E:NF2} and~$\NF'$ by~\eqref{E:NF1}, then $\ev(\ww) = \gg$ implies $\NF'(\gg) = \nm(\ww) = \NF(\gg)$. Hence the correspondences of~\ITEM1 and~\ITEM2 are inverses of one another.
\end{proof}

\subsection{The non-graded case}\label{SS:NonGraded}

So far, according to Lemma~\ref{L:Graded}, only graded monoids are eligible. We explain how to adapt our approach to arbitrary monoids. 

\begin{defi}\label{D:Neutral}
If $(\SS, \nm)$ is a normalisation, an element~$\ee$ of~$\SS$ is called \emph{$\nm$-neutral} if 
\begin{equation}\label{E:Neutral1}
\nm(\ww \sep \ee) = \nm(\ee \sep \ww) = \nm(\ww) \sep \ee
\end{equation}
hold for every~$\SS$-word~$\ww$. If~$\MM$ is a monoid, we say that $(\SS, \nm)$ is a normalisation \emph{mod~$\ee$} for~$\MM$ if $\ee$ is an $\nm$-neutral element of~$\SS$ and $\MM$ admits the presentation 
\begin{equation}\label{E:Neutral2}
\PRESp{\SS}{\{\ww = \nm(\ww) \mid \ww \in \SS^*\} \cup \{\ee = 1\}}.
\end{equation}
We then put $\SSm = \SS \setminus \{\ee\}$, and write~$\ev_{\ee}$ for the canonical projection of~$\SSm^*$ onto~$\MM$.
\end{defi}

If $(\SS,\nm)$ is a normalisation for a monoid~$\MM$, \eqref{E:Neutral1} implies that there exists at most one $\nm$-neutral element in~$\SS$. Then, if~$\ee$ is such an element, $(\SS, \nm)$ is a normalisation mod~$\ee$ for the monoid obtained by collapsing~$\ee$ in~$\MM$.

\begin{lemm}\label{L:Neutral}
Assume that $(\SS, \nm)$ is a normalisation mod~$\ee$ for a monoid~$\MM$, and let~$\pi_\ee$ be the canonical projection from~$\SS^*$ onto~$\SSm^*$.

\ITEM1 The monoid~$\MM$ admits the presentation
\begin{equation}\label{E:Neutral3}
\PRESp{\SSm}{\{\ww = \pi_\ee(\nm(\ww)) \mid \ww \in \SSm^*\}}.
\end{equation}

\ITEM2 For all $\SS$-words $\ww_0 \wdots \ww_\ell$, we have
\begin{equation}\label{E:Neutral4}
\nm(\ww_0 \sep \ee \sep \ww_1 \sep \pdots \sep \ww_{\ell-1} \sep \ee \sep \ww_\ell) = \nm(\ww_0 \sep \pdots \sep \ww_\ell) \sep \ee^\ell.
\end{equation}

\ITEM3 For every $\SS$-word~$\ww$, we have $\nm(\ww) = \ww' \sep \ee^{\ell}$, where~$\ww'$ is an $\SS_{\ee}$-word and~$\ell$ is an upper bound of the number of occurrences of~$\ee$ in~$\ww$. 
\end{lemm}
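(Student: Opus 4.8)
The plan is to prove the three claims of Lemma~\ref{L:Neutral} in order, using only the defining properties of an $\nm$-neutral element~\eqref{E:Neutral1} and the general normalisation axioms, reserving the real work for part~\ITEM2.

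For part~\ITEM1, I would start from the presentation~\eqref{E:Neutral2} of~$\MM$ on the generating set~$\SS$ and eliminate the generator~$\ee$ using the relation $\ee = 1$. A Tietze transformation replaces every occurrence of~$\ee$ by the empty word, which is exactly what applying~$\pi_\ee$ does; so each relation $\ww = \nm(\ww)$ becomes $\pi_\ee(\ww) = \pi_\ee(\nm(\ww))$. For an $\SSm$-word~$\ww$ we have $\pi_\ee(\ww) = \ww$, and as~$\ww$ ranges over all $\SS$-words its image~$\pi_\ee(\ww)$ ranges over all $\SSm$-words, so the surviving relations are precisely those in~\eqref{E:Neutral3}. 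The only point needing care is that the relations indexed by $\SS$-words containing~$\ee$ add nothing new once~$\ee=1$ is imposed, which follows because $\pi_\ee$ is a monoid homomorphism compatible with the relation $\ee=1$.

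For part~\ITEM2, the plan is to induct on~$\ell$, the number of marked~$\ee$'s displayed between the blocks. The base case $\ell = 0$ is trivial. For the inductive step I would isolate the rightmost displayed~$\ee$ and use the neutrality relations~\eqref{E:Neutral1} together with the locality axiom~\eqref{E:NormSys3} to slide that~$\ee$ to the far right, collecting one factor of~$\ee$. Concretely, writing the argument as $\uu \sep \ee \sep \ww_\ell$ with $\uu = \ww_0 \sep \ee \sep \pdots \sep \ee \sep \ww_{\ell-1}$, I would apply~\eqref{E:Neutral1} in the form $\nm(\uu \sep \ee \sep \ww_\ell)$; the subtlety is that~\eqref{E:Neutral1} governs an~$\ee$ adjacent to a single word on one side, so one first commutes the~$\ee$ past~$\ww_\ell$ to obtain a trailing~$\ee$, then peels it off. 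This is the step I expect to be the main obstacle: the neutrality axiom as stated moves an~$\ee$ only across one adjacent factor, whereas here an interior~$\ee$ must be transported across the entire suffix~$\ww_\ell$, so I would likely need an auxiliary sub-induction (or a preliminary observation that~\eqref{E:Neutral1} iterates to give $\nm(\uu \sep \ee \sep \vv) = \nm(\uu\sep\vv)\sep\ee$ for arbitrary~$\uu,\vv$) before the main induction on~$\ell$ goes through cleanly.

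Part~\ITEM3 then follows as a corollary: given an arbitrary $\SS$-word~$\ww$, factor it as a product of $\SSm$-words separated by the~$\ell$ occurrences of~$\ee$ it contains, apply~\eqref{E:Neutral4} to push all~$\ee$'s to the right, obtaining $\nm(\ww) = \nm(\ww_0\sep\pdots\sep\ww_\ell)\sep\ee^\ell$, and set $\ww' = \nm(\ww_0\sep\pdots\sep\ww_\ell)$. By~\eqref{E:NormSys1} the word~$\ww'$ has the same length as $\ww_0\sep\pdots\sep\ww_\ell$, an $\SSm$-word, and since normalisation of an $\SSm$-word can only produce further~$\ee$'s at the cost of neutrality, I would argue that~$\ww'$ is itself an $\SS_\ee$-word, giving the stated bound~$\ell$ on the number of~$\ee$'s; the phrase ``upper bound'' in the statement accommodates the case where~$\ww'$ happens to contain no~$\ee$, so no sharper count is required.
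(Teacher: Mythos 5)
Your treatment of parts~\ITEM1 and~\ITEM2 follows essentially the paper's own route. For~\ITEM2, the ``preliminary observation'' you anticipate needing, namely $\nm(\uu \sep \ee \sep \vv) = \nm(\uu \sep \vv) \sep \ee$, is exactly what the paper uses, and it requires no auxiliary sub-induction: it is immediate from \eqref{E:NormSys3} and~\eqref{E:Neutral1}, since
$\nm(\uu \sep \ee \sep \vv) = \nm(\uu \sep \nm(\ee\sep\vv)) = \nm(\uu \sep \nm(\vv) \sep \ee) = \nm(\uu\sep\vv\sep\ee) = \nm(\uu\sep\vv)\sep\ee$.
The axiom~\eqref{E:Neutral1} already moves~$\ee$ across an \emph{arbitrary} $\SS$-word in one step, so no letter-by-letter transport is needed; the obstacle you flag is not one.

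The genuine gap is in part~\ITEM3. After applying~\eqref{E:Neutral4} you set $\ww' = \nm(\ww_0\sep\pdots\sep\ww_\ell) = \nm(\pi_\ee(\ww))$ and assert that this word lies in~$\SSm^*$. That is unjustified and in general false: $\nm$ maps $\SS^*$ to~$\SS^*$, and nothing prevents the normal form of an $\SSm$-word from containing the letter~$\ee$ --- this is in fact the typical situation (for the normalisation derived from a Garside family, $\nm(\ss\sep\tt)$ is $\ss\tt\sep 1$ whenever $\ss\tt$ lies in~$\SS$). This is precisely why the statement only claims that~$\ell$ is an \emph{upper bound} for the number of occurrences of~$\ee$ in~$\ww$: normalising $\pi_\ee(\ww)$ may create additional copies of~$\ee$, so $\ell$ can strictly exceed that number, contrary to your reading. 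What is missing is a proof that all occurrences of~$\ee$ in $\nm(\pi_\ee(\ww))$ sit at the end. The paper obtains this by applying~\ITEM2 a \emph{second} time, to $\vv = \nm(\pi_\ee(\ww))$ itself: this gives $\nm(\vv) = \nm(\pi_\ee(\vv))\sep\ee^{\qq}$ with~$\qq$ the number of occurrences of~$\ee$ in~$\vv$; idempotence of~$\nm$ yields $\vv = \nm(\vv)$, and counting occurrences of~$\ee$ on both sides of $\vv = \nm(\pi_\ee(\vv))\sep\ee^{\qq}$ forces $\nm(\pi_\ee(\vv))$ to contain none. Without this second pass (or an equivalent argument), the conclusion of~\ITEM3 does not follow.
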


\begin{proof}
\ITEM1 By definition, $\MM$ is generated by~$\SS$, hence by~$\SS_\ee$. Next, for every $\SSm$-word~$\ww$, the relation $\ww = \pi_{\ee}(\nm(\ww))$ is valid in~$\MM$ owing to $\pi_{\ee}(\ww)=\ww$. As~$\MM$ admits the presentation~\eqref{E:Neutral2}, it remains to check that all relations of~\eqref{E:Neutral2} can be derived from those of~\eqref{E:Neutral3} plus $\ee = 1$: this is because the $\SS$-word~$\nm(\ww)$ is obtained from~$\pi_\ee(\nm(\ww))$ by inserting copies of~$\ee$ thanks to the relation $\ee = 1$.

\ITEM2 Use induction on~$\ell\ge 0$. For~$\ell = 0$, the result is immediate. For~$\ell \ge 1$, we~find
\begin{align*}
\nm(\ww_0 \sep \ee \sep \ww_1 \sep \pdots \sep \ww_{\ell-1} \sep \ee \sep \ww_\ell)
&= \nm(\ww_0 \sep \ee \sep \ww_1 \sep \pdots \sep \ww_{\ell-1} \sep \nm(\ee \sep \ww_\ell))
&&\text{by~\eqref{E:NormSys3}}\\
&= \nm(\ww_0 \sep \ee \sep \ww_1 \sep \pdots \sep \ww_{\ell-1} \sep \ww_\ell \sep \ee)
&&\text{by~\eqref{E:Neutral1}}\\
&= \nm(\ww_0 \sep \ee \sep \ww_1 \sep \pdots \sep \ww_{\ell-1} \sep \ww_\ell) \sep \ee
&&\text{by~\eqref{E:Neutral1}}\\
&= \nm(\ww_0 \sep \pdots \sep \ww_{\ell-1} \sep \ww_\ell) \sep \ee^{\ell}
&&\text{by induction hypothesis.}
\end{align*}

\ITEM3 By~\ITEM2, we have $\nm(\ww)=\vv|\ee^{\pp}$, where $\vv$ is $\nm(\pi_{\ee}(\ww))$ and~$\pp$ is the number of occurrences of~$\ee$ in~$\ww$. By the same argument, we find that $\nm(\vv)$ is $\ww'\sep\ee^{\qq}$, where~$\ww'$ is $\nm(\pi_{\ee}(\vv))$ and~$\qq$ is the number of occurrences of~$\ee$ in~$\vv$. Since~$\nm$ is idempotent, we have $\vv=\nm(\vv)=\ww'\sep\ee^{\qq}$. We deduce that~$\ww'$ contains no~$\ee$ and that $\nm(\ww)=\ww'\sep\ee^{\pp+\qq}$ holds. \end{proof}

The following variation of Proposition~\ref{P:NF} requires no graduation assumption: the solution is to add a dummy generator to preserve word length.

\begin{prop}\label{P:GenNF}
\ITEM1 If $(\SS,\nm)$ is a normalisation mod~$\ee$ for a monoid~$\MM$, we obtain a geodesic normal form on~$(\MM,\SSm)$ by putting
\begin{equation}\label{E:GenNF1}
\NF(\gg) = \pi_\ee(\nm(\ww)), \quad\text{where $\ww$ is any representative of~$\gg$ in~$\SS^*$}. 
\end{equation}

\ITEM2 Conversely, assume that~$\MM$ is a monoid, $\SS$ is a generating subfamily of~$\MM$, and~$\NF$ is a geodesic normal form on~$(\MM,\SS)$. Put $\SSp = \SS \amalg \{\ee\}$ and write~$\ev^{\ee}$ for the canonical projection of~$\SS^*$ onto~$\MM$ extended to~$(\SSp)^*$ by $\ev^\ee(\ee) = 1$. Then we obtain a normalisation~$(\SSp,\nm)$ mod~$\ee$ for~$\MM$ by putting 
\begin{equation}\label{E:GenNF2}
\nm(\ww) = \NF(\ev^{\ee}(\ww))  \sep \ee^\mm, \quad \text{with $\mm = \LG\ww - \LG{\NF(\ev^{\ee}(\ww))}$.}
\end{equation}

\ITEM3 The correspondences of~\ITEM1 and~\ITEM2 are inverses of one another.
\end{prop}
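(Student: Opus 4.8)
The plan is to treat the three claims in turn, establishing \ITEM1 and \ITEM2 as genuine constructions and then checking in \ITEM3 that the two formulas compose to the identity. Throughout I shall lean on Lemma~\ref{L:Neutral}, which already records the bookkeeping of the dummy generator~$\ee$, and on the fact that~$\MM$ carries the presentation~\eqref{E:Neutral2}.

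For \ITEM1, the first and main task is to show that~\eqref{E:GenNF1} does not depend on the chosen representative~$\ww$ of~$\gg$. Since~$\MM$ has presentation~\eqref{E:Neutral2}, two $\SS$-words~$\ww,\ww'$ with $\ev(\ww)=\ev(\ww')$ are linked by a finite chain of elementary transformations, each of which either replaces a factor~$\uu$ by~$\nm(\uu)$ (or conversely) or inserts/deletes a letter~$\ee$. I would check that $\pi_\ee \comp \nm$ is invariant under both kinds: invariance under the first is exactly~\eqref{E:NormSys3}, and invariance under the second follows from the $\ell=1$ instance of~\eqref{E:Neutral4}, namely $\nm(\uu_1 \sep \ee \sep \uu_2) = \nm(\uu_1 \sep \uu_2)\sep\ee$, so that applying~$\pi_\ee$ erases the trailing~$\ee$. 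Once well-definedness is secured, the remaining points are short: that $\NF$ is a section of~$\ev_\ee$ comes from $\ev_\ee(\pi_\ee(\nm(\ww))) = \ev(\nm(\ww)) = \ev(\ww)$, the first equality because deleting~$\ee$'s does not change the image under~$\ev$ and the last because $\ww=\nm(\ww)$ holds in~$\MM$; and the geodesic inequality follows because any $\SSm$-word~$\vv$ representing~$\gg$ may serve as the representative, whence $\LG{\NF(\gg)} = \LG{\pi_\ee(\nm(\vv))} \le \LG{\nm(\vv)} = \LG\vv$, using that~$\pi_\ee$ never increases length and that~$\nm$ preserves it by~\eqref{E:NormSys1}.

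For \ITEM2, I would first note that the exponent~$\mm$ in~\eqref{E:GenNF2} is nonnegative: deleting the~$\ee$'s from~$\ww$ yields an $\SS$-word of length $\le\LG\ww$ representing $\ev^\ee(\ww)$, so the geodesic hypothesis gives $\LG{\NF(\ev^\ee(\ww))} \le \LG\ww$. Then~\eqref{E:NormSys1} is immediate from the definition of~$\mm$, and $\ee$-neutrality~\eqref{E:Neutral1} holds because prepending or appending~$\ee$ leaves $\ev^\ee(\ww)$ unchanged while raising $\mm$ by one. For~\eqref{E:NormSys3}, the key observation is that $\ev^\ee(\nm(\ww)) = \ev^\ee(\ww)$ since~$\NF$ is a section, so $\ev^\ee(\uu\sep\nm(\ww)\sep\vv) = \ev^\ee(\uu\sep\ww\sep\vv)$; the two sides of~\eqref{E:NormSys3} therefore share the same $\NF$-value, and having the same total length they carry the same power of~$\ee$. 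The presentation~\eqref{E:Neutral2} is then obtained by showing that every $\SSp$-word~$\ww$ is equivalent, modulo the relations $\ww=\nm(\ww)$ and $\ee=1$, to~$\NF(\ev^\ee(\ww))$, so that $\ev^\ee(\ww)=\ev^\ee(\ww')$ forces $\ww\equiv\ww'$. The delicate point is axiom~\eqref{E:NormSys2}: for $\ww=\ee$ it holds because $\NF(1)$ is empty and $\mm=1$, while for a generator $\ss\in\SS$ it relies on the geodesic property forcing $\LG{\NF(\ss)}=1$ together with $\SS\subseteq\MM$, so that the unique length-one word representing~$\ss$ is~$\ss$ itself; this is where the standing convention $1\notin\SS$ is used.

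Finally, \ITEM3 is a pair of direct computations. Starting from a normalisation mod~$\ee$, forming~$\NF$ by~\eqref{E:GenNF1} and then~$\nm'$ by~\eqref{E:GenNF2}, I would use Lemma~\ref{L:Neutral}\ITEM3 to write $\nm(\ww)=\ww'\sep\ee^\ell$ with~$\ww'$ an $\SSm$-word, so that $\NF(\ev(\ww))=\pi_\ee(\nm(\ww))=\ww'$ and, comparing lengths via~\eqref{E:NormSys1}, the reconstructed exponent equals~$\ell$, giving $\nm'=\nm$. In the other direction, feeding $\NF(\gg)$ itself into~\eqref{E:GenNF2} gives $\mm=0$, and then~\eqref{E:GenNF1} returns $\pi_\ee(\NF(\gg))=\NF(\gg)$. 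I expect the genuine obstacle to be the well-definedness step in~\ITEM1, as it is the only place that really uses the structure of the presentation~\eqref{E:Neutral2} rather than formal manipulation; everything else reduces to length bookkeeping once Lemma~\ref{L:Neutral} is invoked.
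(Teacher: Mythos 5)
Your proposal is correct and follows essentially the same route as the paper: well-definedness of~$\NF$ via the two kinds of elementary relations together with Lemma~\ref{L:Neutral}~\ITEM2, the section and geodesic properties by length bookkeeping, \eqref{E:NormSys3} from $\NF$ being a section plus length preservation, $\ee$-neutrality by the shift $\mm\mapsto\mm+1$, and \ITEM3 via Lemma~\ref{L:Neutral}~\ITEM3. The only point where you are more explicit than the paper is the verification of~\eqref{E:NormSys2} for $\ss\in\SS$ (the paper simply asserts $\NF(\ss)=\ss$, which indeed tacitly uses that no generator represents the unit), and that extra care is harmless.
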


\begin{proof}
\ITEM1 Let $\ww,\ww'$ be two representatives of~$\gg$ in~$\SS^*$. Then one can be obtained from the other by applying relations $\vv=\vv'$ with either $\vv=\uu_1\sep\nm(\uu_2)\sep\uu_3$ and $\vv'=\uu_1\sep\uu_2\sep\uu_3$, or $\vv=\uu_1\sep\ee\sep\uu_2$ and $\vv'=\uu_1\sep\uu_2$. In the first case, \eqref{E:NormSys3} gives~$\nm(\vv)=\nm(\vv')$. In the second case, Lemma~\ref{L:Neutral}~\ITEM2 gives $\nm(\vv)=\nm(\vv')\sep\ee$. Thus we have $\pi_{\ee}(\nm(\ww))=\pi_{\ee}(\nm(\ww'))$ and $\NF_{\ee}(\gg)$ is well defined. 

Now, let~$\MM^{\ee}$ be the monoid presented by~\eqref{E:NormSys4}, $\pi:\MM^{\ee}\twoheadrightarrow\MM$ and~$\ev_{\ee}:\SSm^*\twoheadrightarrow\MM$ be the canonical projections. For~$\gg$ in~$\MM$ and~$\ww$ be a representative of~$\gg$ in~$\SS^*$, the relation $\pi\comp\ev = \ev_{\ee}\comp\pi_{\ee}$ and~\eqref{E:NormMap1} imply 
$$\ev_{\ee}(\NF(\gg)) = \ev_{\ee}(\pi_{\ee}(\nm(\ww)) = \pi(\ev(\nm(\ww))) = \pi(\ev(\ww)) = \gg,$$
so $\NF$ is a normal form on $(\MM,\SS_{\ee})$. Moreover, we have $\LG{\pi_{\ee}(\nm(\ww))} \le \LG{\nm(\ww)} = \LG{\ww}$, so $\NF$ is geodesic.

\ITEM2 Since~$\NF$ is geodesic, we have $\LG{\NF(\ev^{\ee}(\ww))} \le \LG\ww$ for every $\SSp$-word~$\ww$. So~\eqref{E:GenNF2} makes sense and~$\nm$ is length-preserving. Next, for $\ss\in\SSp$, we have either $\ss\in\SS$ and $\nm(\ss)=\NF(\ss)=\ss$, or $\ss=\ee$ and  $\nm(\ee) = {\NF(1)\sep\ee=\ee}$. Then, since~$\NF$ is a section of~$\ev$, \eqref{E:GenNF2} gives $\ev^{\ee}(\nm(\ww)) = \ev(\NF(\ev^{\ee}(\ww)) = \ev^{\ee}(\ww)$, yielding $\NF(\ev^{\ee}(\uu\sep\nm(\ww)\sep\vv)) = \NF(\ev^{\ee}(\uu\sep\ww\sep\vv))$ and, since~$\nm$ is length-preserving, $\nm(\uu\sep\nm(\ww)\sep\vv)=\nm(\uu\sep\ww\sep\vv)$. Thus $(\SSp,\nm)$ is a normalisation. 

Now, let~$\ww$ be an $\SSp$-word, with $\mm = \LG\ww - \LG{\NF(\ev^{\ee}(\ww))}$. Then we have $\ev^{\ee}(\ww \sep \ee) = \ev^{\ee}(\ee \sep \ww) = \ev^{\ee}(\ww)$, whence $\nm(\ww \sep \ee) =\NF(\ev^{\ee}(\ww)) \sep \ee^{\mm+1} = \nm(\ww) \sep \ee$, and, similarly, $\nm(\ee \sep \ww) = \nm(\ww) \sep \ee$, so $\ee$ is $\nm$-neutral. Finally, by Lemma~\ref{L:Neutral}\ITEM1, the monoid~$\MM$ admits the presentation $\PRESp{(\SSp)_{\ee}}{\{\ww = \pi_{\ee}(\nm(\ww)) \mid \ww \in (\SSp)_{\ee}^*\}}$. Owing to the equalities $(\SSp)_{\ee}=\SS$ and $\pi_{\ee}(\nm(\ww))=\pi_{\ee}(\NF(\ev^{\ee}(\ww)) \sep \ee^m)=\NF(\ev(\ww))$, the monoid~$\MM$ also admits the presentation $\PRESp{\SS}{\{\ww=\NF(\ev(\ww)) \mid \ww\in\SS^*\}}$. 

\ITEM3 Starting from~\ITEM1, let $(\SSp, \nm')$ be the normalisation mod~$\ee$ derived from~$\NF$ using~\ITEM2. Then we have $(\SS_{\ee})^{\ee}=\SS$ and $\nm'(\ww) = \NF(\ev(\ww)) \sep \ee^m = \pi_{\ee}(\nm(\ww)) \sep \ee^m$, whence $\nm'(\ww)=\nm(\ww)$ by Lemma~\ref{L:Neutral}~\ITEM3. Conversely, starting from~\ITEM2, let $\NF'$ be the normal form derived from~$\nm$ using~\ITEM1. Then we have $\NF'(\gg) = \pi_{\ee}(\nm(\ww)) = \pi_{\ee}(\NF(\ev^{\ee}(\ww))\sep\ee^{\mm}) = \NF(\gg)$ for every~$\gg$ in~$\MM$ with representative $\SS$-word~$\ww$.
\end{proof}

\begin{rema*}
If a monoid~$\MM$ is graded with respect to a generating family~$\SS$ and~$\NF$ is a normal form on~$(\MM, \SS)$, then two normalisations come associated with~$\MM$ and~$\SS$: the one $(\SS, \nm)$ provided by Proposition~\ref{P:NF}~\ITEM2, and the one $(\SSp,\nm^\ee)$ provided by Proposition~\ref{P:GenNF}~\ITEM2. The connection between these systems is given, for every $\SSp$-word~$\ww$, by the equality $\pi_{\ee}(\nm^\ee(\ww)) = \nm(\pi_{\ee}(\ww))$.
\end{rema*}

\section{Quadratic normalisations and their class}\label{S:Quadratic}

We now restrict our study to particular normalisations that are, in a convenient sense, generated by transformations of length-two words. After basic definitions and examples (Subsection~\ref{SS:Quadratic}), we relate those normalisations with rewriting systems (Subsection~\ref{SS:Rewrite}). Then we introduce the class of such a normalisation as a pair of elements of~$\NNNN \cup \{\infty\}$ that gives an upper bound on the complexity of normalisation for length-three words (Subsection~\ref{SS:Class}). Finally, we consider the $\pp$-class, an analogue involving length-$\pp$ words (Subsection~\ref{SS:pClass}).

\subsection{Quadratic normalisations}\label{SS:Quadratic}

\begin{nota}\label{N:Local}
\ITEM1 If~$\SS$ is a set and~$\ft$ is a map from the set~$\Pow\SS\pp$ of length-$\pp$ $\SS$-words to itself, then, for $\ii\ge 1$, we denote by~$\ft_\ii$ the (partial) map of~$\SS^*$ to itself that consists in applying~$\ft$ to the entries in position~$\ii\wdots\ii+p-1$. If $\uu= \ii_1 \sep \pdots \sep \ii_{\nn}$ is a finite sequence of positive integers, we write~$\ft_{\uu}$ for the composite map $\ft_{\ii_{\nn}}\comp \pdots \comp \ft_{\ii_1}$. 

\ITEM2 If $(\SS, \nm)$ is a normalisation, we denote by~$\nmr$ the restriction of~$\nm$ to~$\Pow\SS2$.
\end{nota}

Here is the main notion investigated in this paper:

\begin{defi}\label{D:Quad}
A normalisation $(\SS,\nm)$ is called \emph{quadratic} if the following conditions hold:
\begin{gather}\label{E:Quad1}
\parbox[c]{130mm}{An $\SS$-word~$\ww$ is $\nm$-normal if, and only if, every length-two factor of~$\ww$ is.}\\ 
\label{E:Quad2}
\parbox[c]{130mm}{For every $\SS$-word~$\ww$, there exists a finite sequence~$\uu$ of positions, depending on~$\ww$, such that $\nm(\ww)$ \VR(3,0) is equal to~$\nmr_{\!\uu}(\ww)$.}
\end{gather}
\end{defi}

So, a normalisation~$(\SS, \nm)$ is quadratic if $\nm$-normality only depends on length-two factors and if one can go from an $\SS$-word~$\ww$ to the $\SS$-word~$\nm(\ww)$ in finitely many steps, each of which consists in applying~$\nm$ to some length-two factor. Note that, provided $\SS$ is finite, \eqref{E:Quad1} implies that the language of all $\nm$-normal $\SS$-words is regular.

\begin{exam}
The normalisation~$(\SS, \nm)$ of Example~\ref{X:Abelian} is quadratic. Indeed, an $\SS$-word is $\nm$-normal if, and only if, all its length-two subfactors are of the form~$\ss\sep\tt$ with $\ss \le \tt$, so~\eqref{E:Quad1} is satisfied. Moreover, \eqref{E:Quad2} holds, since every $\SS$-word~$\ww$ can be transformed into the equivalent $\nm$-normal $\SS$-word~$\nm(\ww)$ by switching adjacent letters that are not in the expected order: for instance, if $\tta < \ttb < \ttc$, one has $\nm( \ttc\ttb\ttb\tta) = \tta\ttb\ttb\ttc = \nmr_{\!31213}(\ttc\ttb\ttb\tta)$. Note that the sequence of length-two normalisations is not unique, and depends on the initial word.
\end{exam}

Definition~\ref{D:Quad} gathers two locality conditions, which, taken separately, do not seem to have interesting consequences in our approach: \eqref{E:Quad1} is a static characterisation of normal words, whereas~\eqref{E:Quad2} is dynamical in that it involves transformations into normal words. As~\eqref{E:Quad2} implies that a length-two word is $\nm$-normal if, and only if, it is $\nmr$-invariant, it induces the right-to-left implication in~\eqref{E:Quad1}. The next two counterexamples show that this is the only general connection between~\eqref{E:Quad1} and~\eqref{E:Quad2}.

\begin{exam}\label{X:LocNotQuad}
Let $\SS=\{\tta,\ttb,\ttc\}$ and~$\nm:\SS^*\fl\SS^*$ be defined as follows: starting from~$\ww$ in~$\SS^*$, we first replace every factor $\tta\ttb\tta$ or $\tta\ttc\tta$ with~$\tta^3$, and then, in the resulting word, we replace every factor~$\tta\ttb$ with~$\tta\ttc$ and every factor~$\ttc\tta$ with~$\ttb\tta$. Then $(\SS, \nm)$ is a normalisation (for~\eqref{E:NormSys3}, observe for instance that $\nm(\uu\tta\ttb\vv) = \nm(\uu\tta\ttc\vv)$ holds both when $\vv$ begins with~$\tta$ and when it does not), it satisfies~\eqref{E:Quad1} since a word is $\nm$-normal if and only if it contains no factor~$\tta\ttb$ or~$\ttc\tta$, but it does not satisfy~\eqref{E:Quad2}: neither $\tta\ttb\tta$ nor $\tta\ttc\tta$ is $\nm$-normal, but the only $\SS$-words that can be obtained from~$\tta\ttb\tta$ and~$\tta\ttc\tta$ using~$\nmr_{\!1}$ and $\nmr_{\!2}$ are~$\tta\ttb\tta$ and~$\tta\ttc\tta$ themselves. Hence \eqref{E:Quad1} does not imply~\eqref{E:Quad2}.
\end{exam}

\begin{exam}\label{X:QuadNotLoc}
Let $\SS = \{\tta, \ttb\}$ and $\nm : \SS^* \to \SS^*$ be defined by $\nm(\ww) = \ww$ for $\Vert\ww\Vert \le 1$, and $\nm(\ww) = \tta^{\Vert\ww\Vert-1}\ttb$ for $\Vert\ww\Vert \ge 2$. Then $(\SS, \nm)$ is a normalisation for the monoid~$\PRESp\SS{\tta\ttb = \ttb\tta = \ttb^2 = \tta^2}$. Now~\eqref{E:Quad1} fails, since $\tta\tta\ttb$, which is $\nm$-normal, contains the non $\nm$-normal factor~$\tta\tta$. But~\eqref{E:Quad2} is satisfied, since a straightforward induction gives $\nm(\ww) = \nmr_ {1\sep\pdots \sep\pp-1} (\ww)$ for~$\ww$ of length~$\pp \ge 2$. Hence~\eqref{E:Quad2} does not imply~\eqref{E:Quad1}.
\end{exam}

When a normalisation~$(\SS, \nm)$ is quadratic, the restriction~$\nmr$ of~$\nm$ to~$\Pow\SS2$ is crucial. Here are first general properties.

\begin{prop}\label{P:QuadPres}
\ITEM1 If $(\SS,\nm)$ is a quadratic normalisation for a monoid~$\MM$, then~$\nmr$ is idempotent and~$\MM$ admits the presentation 
\begin{equation}\label{E:QuadPres}
\PRESp{\SS}{\{\ss\sep\tt = \nmr(\ss\sep\tt) \mid \ss, \tt \in \SS\}};
\end{equation}

\ITEM2 If $(\SS,\nm)$ is a quadratic normalisation, then an element~$\ee$ of~$\SS$ is $\nm$-neutral if, and only if, it satisfies
\begin{equation}\label{E:QuadNeutral1}
\nmr(\ee\sep\ss) = \nmr(\ss\sep\ee) = \ss\sep\ee \qquad \text{for every~$\ss$ in~$\SS$.}
\end{equation}

\ITEM3 If $(\SS,\nm)$ is a quadratic normalisation mod~$\ee$ for a monoid~$\MM$, then~$\MM$ admits the presentation
\begin{equation}\label{E:QuadNeutral2}
\PRESp{\SSm}{\{\ss\sep\tt = \pi_{\ee} (\nmr(\ss\sep\tt)) \mid \ss, \tt \in \SSm\}}. 
\end{equation}
\end{prop}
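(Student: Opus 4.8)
The plan is to handle the three items in turn, deriving each from the general properties of normalisations together with the two locality conditions~\eqref{E:Quad1} and~\eqref{E:Quad2}. For~\ITEM1, idempotence of~$\nmr$ is immediate: as~$\nm$ is length-preserving and idempotent (the latter from~\eqref{E:NormSys3}), a length-two word~$\ww$ satisfies $\nmr(\nmr(\ww)) = \nm(\nm(\ww)) = \nm(\ww) = \nmr(\ww)$. For the presentation I would compare the relation family $\{\ss\sep\tt = \nmr(\ss\sep\tt)\}$ with the family $\{\ww = \nm(\ww)\}$ defining~$\MM$ via~\eqref{E:NormSys4}. Each quadratic relation is the special case $\ww = \ss\sep\tt$ of a defining relation, hence valid in~$\MM$. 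Conversely, \eqref{E:Quad2} provides a sequence~$\uu$ of positions with $\nm(\ww) = \nmr_{\!\uu}(\ww)$; reading it step by step expresses $\ww = \nm(\ww)$ as a finite chain of applications of the quadratic relations, since each $\nmr_{\!\ii}$ rewrites a single length-two factor. The two families are therefore equivalent and~\eqref{E:QuadPres} follows.

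For~\ITEM2, the forward implication specialises~\eqref{E:Neutral1} to a single letter~$\ss$ and restricts to length two, using $\nm(\ss) = \ss$ from~\eqref{E:NormSys2}. The substantial direction is the converse. Writing $\vv = \nm(\ww)$ and using $\nm(\ww\sep\ee) = \nm(\vv\sep\ee)$ and $\nm(\ee\sep\ww) = \nm(\ee\sep\vv)$ from~\eqref{E:NormSys3}, I would argue separately. For $\nm(\vv\sep\ee)$ I claim $\vv\sep\ee$ is already $\nm$-normal: its length-two factors are those of the normal word~$\vv$ together with the final factor formed by the last letter of~$\vv$ and~$\ee$, which is $\nmr$-invariant by hypothesis, so~\eqref{E:Quad1} applies. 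For $\nm(\ee\sep\vv)$ I would migrate the~$\ee$ rightward, applying $\nmr$ successively in positions $1, 2, \ldots, \LG\vv$ and using $\nmr(\ee\sep\ss) = \ss\sep\ee$ at each step to transform $\ee\sep\vv$ into $\vv\sep\ee$; since every such elementary step preserves the $\nm$-image by~\eqref{E:NormSys3}, we obtain $\nm(\ee\sep\vv) = \nm(\vv\sep\ee) = \vv\sep\ee$. Both computations give $\nm(\ww)\sep\ee$, establishing $\nm$-neutrality.

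For~\ITEM3 I would run a Tietze argument. By~\eqref{E:Neutral2} the monoid admits $\PRESp\SS{\{\ww = \nm(\ww)\} \cup \{\ee = 1\}}$, and by the equivalence of relation families proved in~\ITEM1 this rewrites as $\PRESp\SS{\{\ss\sep\tt = \nmr(\ss\sep\tt) \mid \ss,\tt \in \SS\} \cup \{\ee = 1\}}$. Eliminating the generator~$\ee$ through $\ee = 1$ amounts to substituting the empty word for~$\ee$ in every relation: a relation with $\ss = \ee$ or $\tt = \ee$ becomes trivial, because~\ITEM2 forces $\nmr(\ee\sep\tt) = \tt\sep\ee$ and $\nmr(\ss\sep\ee) = \ss\sep\ee$, whereas a relation with $\ss, \tt \in \SSm$ becomes $\ss\sep\tt = \pi_\ee(\nmr(\ss\sep\tt))$. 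What survives is exactly~\eqref{E:QuadNeutral2}.

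The main obstacle I anticipate is the converse in~\ITEM2, and specifically the treatment of $\nm(\ee\sep\ww)$. Appending~$\ee$ on the right is settled \emph{statically} through~\eqref{E:Quad1}, but prepending it on the left is genuinely \emph{dynamical}: it requires the migration of~$\ee$ across the whole word by iterated length-two normalisations, and one must verify that each elementary step leaves the global $\nm$-value unchanged, which is precisely what~\eqref{E:NormSys3} guarantees. Everything else reduces to bookkeeping with the presentations.
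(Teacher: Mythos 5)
Your proposal is correct and follows essentially the same route as the paper: idempotence of~$\nmr$ from~\eqref{E:NormSys3}, the presentation in~\ITEM1 by comparing the quadratic relations with those of~\eqref{E:NormSys4} via~\eqref{E:Quad2}, the converse of~\ITEM2 by showing $\nm(\ww)\sep\ee$ is $\nm$-normal through~\eqref{E:Quad1} and then moving~$\ee$ across the word, and~\ITEM3 by the Tietze collapse of~$\ee$. The only (cosmetic) difference is in the left-hand case of~\ITEM2, where the paper runs an induction on~$\LG\ww$ peeling off the first letter while you migrate~$\ee$ rightward through the already normalised word~$\vv=\nm(\ww)$; both rest on the same repeated use of $\nmr(\ee\sep\ss)=\ss\sep\ee$ together with~\eqref{E:NormSys3}.
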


\begin{proof}
\ITEM1 By~\eqref{E:NormSys3}, $\nm$ is idempotent, hence so is its restriction~$\nmr$. The monoid~$\MM$ admits the presentation~\eqref{E:QuadPres} because it admits the presentation~\eqref{E:NormSys4} with the same generators, because the relations~\eqref{E:QuadPres} are contained into the ones of~\eqref{E:NormSys4}, and because~\eqref{E:Quad2} implies that every relation of~\eqref{E:NormSys4} is a consequence of finitely many relations of~\eqref{E:Quad2}. 

\ITEM2 The relations~\eqref{E:QuadNeutral1} are particular instances of~\eqref{E:Neutral1}, so they hold if~$\ee$ is $\nm$-neutral. Conversely, assume~\eqref{E:QuadNeutral1} and let~$\ww$ be an $\SS$-word. 

We first prove $\nm(\ww \sep \ee)=\nm(\ww) \sep \ee$. We have $\nm(\ww \sep \ee)=\nm(\nm(\ww) \sep \ee)$ by~\eqref{E:NormSys3}. Moreover, every length-two factor of $\nm(\ww) \sep \ee$ is $\nm$-normal. Indeed, for $\LG{\ww}\ge 1$, writing $\nm(\ww)=\ww' \sep \ss$ with $\ww'\in\SS^*$ and $\ss\in\SS$, the length-two factors of $\nm(\ww) \sep \ee$ are those of $\nm(\ww)$, which are $\nm$-normal by~\eqref{E:Quad1}, and $s \sep \ee$, which is $\nm$-normal by~\eqref{E:QuadNeutral1}. Hence $\nm(\ww) \sep \ee$ is $\nm$-normal by~\eqref{E:Quad1}, which implies $\nm(\ww \sep \ee)=\nm(\ww) \sep \ee$.

Now, we prove $\nm(\ee \sep \ww)=\nm(\ww \sep \ee)$ by induction on~$\LG\ww$. The result is immediate for $\LG\ww = 0$. Otherwise, write $\ww=\ss \sep \ww'$, with $\ss$ in~$\SS$ and $\ww'$ satisfying~$\nm(\ee \sep \ww')=\nm(\ww' \sep \ee)$. Using~\eqref{E:NormSys3},~\eqref{E:QuadNeutral1} and the induction hypothesis on~$\ww'$, we find $
\nm(\ee \sep \ss \sep \ww')
	= \nm(\nm(\ee \sep \ss) \sep \ww')
	= \nm(\ss \sep \ee \sep \ww')
	= \nm(\ss \sep \nm(\ee \sep \ww'))
	= \nm(\ss \sep \nm(\ww' \sep \ee))
	= \nm(\ss \sep \ww' \sep \ee)
$, so $\ee$ is $\nm$-neutral. 

\ITEM3 By~\ITEM1, $\MM$ is presented by $\PRESp{\SS}{\{\ss\sep\tt = \nmr(\ss\sep\tt) \mid \ss, \tt \in \SS\}\cup\{\ee=1\}}$. Applying the Tietze transformation that collapses~$\ee$ onto~$1$, we obtain the presentation $\PRESp{\SS_{\ee}}{\{\pi_{\ee}(\ss\sep\tt) = \pi_{\ee}(\nmr(\ss\sep\tt)) \mid \ss, \tt \in \SS\}}$ for~$\MM$. If at least one of~$\ss$ or~$\tt$ is~$\ee$, then by~\eqref{E:QuadNeutral1}, the corresponding relation boils down to $\ss=\ss$, $\tt=\tt$ or $1=1$, so that we can remove it. Otherwise, we have~$\pi_{\ee}(s \sep t)=s \sep t$, yielding~\eqref{E:QuadNeutral2}.
\end{proof}

\subsection{Quadratic normalisations and rewriting}\label{SS:Rewrite}

We recall that a \emph{(word) rewriting system} is a pair $(\SS,\RR)$ consisting of a set~$\SS$ and a binary relation~$\RR$ on~$\SS^*$ whose elements $(\ww,\ww')$ are written $\ww\fl\ww'$ and called \emph{rewriting rules}. 

Assume that~$(\SS,\RR)$ is a rewriting system. We denote by~$\fl_{\RR}$ the closure of~$\RR$ with respect to the product of~$\SS^*$ and by~$\fl_{\RR}^*$ the reflexive-transitive closure of~$\fl_{\RR}$. An $\SS$-word~$\ww$ is \emph{$\RR$-normal} if~$\ww\fl_{\RR}^*\ww'$ implies $\ww'=\ww$. If $\ww,\ww'$ are $\SS$-words, $\ww'$ is an \emph{$\RR$-normal form of~$\ww$} if~$\ww\fl_{\RR}^*\ww'$ and~$\ww'$ is $\RR$-normal. One says that $(\SS,\RR)$ is \emph{quadratic} if $\ww\fl\ww'\in\RR$ implies $\LG{\ww}=\LG{\ww'} = 2$; \emph{reduced} if $\ww\fl\ww'\in\RR$ implies that~$\ww'$ is $\RR$-normal and $\ww$ is $\RR\setminus\{\ww\fl\ww'\}$-normal; \emph{normalising} if every $\SS$-word admits at least an $\RR$-normal form; and \emph{confluent} if the conjunction of $\ww\fl_{\RR}^*\ww_1$ and $\ww\fl_{\RR}^*\ww_2$ implies $\ww_1\fl_{\RR}^*\ww'$ and $\ww_2\fl_{\RR}^*\ww'$ for some~$\ww'$. As a rewriting rule is a pair of words, there is no ambiguity in speaking of the monoid \emph{presented by~$(\SS, \RR)$}. 

\begin{prop}
\label{P:QuadNormRewr}
\ITEM1 If~$(\SS,\nm)$ is a quadratic normalisation for a monoid~$\MM$, then we obtain a quadratic, reduced, normalising and confluent rewriting system~$(\SS, \RR)$ presenting~$\MM$ by putting 
\begin{equation}
\label{E:QuadRewrOfNorm}
\RR = \{\ss\sep\tt \fl \nmr(\ss\sep\tt) \mid \ss,\tt\in\SS, \ss\sep\tt\neq\nmr(\ss\sep\tt)\}.
\end{equation}

\ITEM2 Conversely, if $(\SS,\RR)$ is a quadratic, reduced, normalising and confluent rewriting system presenting a monoid~$\MM$, we obtain a quadratic normalisation~$(\SS, \nm)$ for~$\MM$ by putting 
\begin{equation}
\label{E:NormOfRewr}
\nm(\ww) = \ww' \quad \text{where~$\ww'$ is the $\RR$-normal form of~$\ww$.}
\end{equation}

\ITEM3 The correspondences of \ITEM1 and \ITEM2 are inverses of one another.
\end{prop}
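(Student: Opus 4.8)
The plan is to establish the three parts in order, leaning throughout on Proposition~\ref{P:QuadPres} and the defining identity~\eqref{E:NormSys3}. For~\ITEM1, I would first dispatch the static properties of~$\RR$. It is \emph{quadratic} by construction, each rule $\ss\sep\tt\fl\nmr(\ss\sep\tt)$ having both sides of length~$2$ since $\nmr$ is length-preserving. It is \emph{reduced}: Proposition~\ref{P:QuadPres}~\ITEM1 gives idempotence of~$\nmr$, so $\nmr(\nmr(\ss\sep\tt))=\nmr(\ss\sep\tt)$ shows every right-hand side is $\RR$-normal, while the rules are indexed injectively by their length-two left-hand sides, so deleting a rule $\ww\fl\ww'$ leaves its source $\RR$-normal. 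It presents~$\MM$ because~\eqref{E:QuadPres} is precisely~$\RR$ once the trivial relations $\ss\sep\tt=\ss\sep\tt$ are discarded. For \emph{normalising}, I would use both quadratic conditions: by~\eqref{E:Quad2} we have $\nm(\ww)=\nmr_{\!\uu}(\ww)$ for some position sequence~$\uu$, and each entry of~$\uu$ that actually changes the word is a rule of~$\RR$, so $\ww\fl_\RR^*\nm(\ww)$; by~\eqref{E:Quad1} the word~$\nm(\ww)$, all of whose length-two factors are $\nmr$-invariant, is $\RR$-normal. Hence $\nm(\ww)$ is an $\RR$-normal form of~$\ww$.

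The one genuinely dynamical point in~\ITEM1 is \emph{confluence}, and the key lemma I would isolate is that a single rewriting step preserves~$\nm$: if $\ww=\uu\sep\ss\sep\tt\sep\vv$ and $\ww_1=\uu\sep\nmr(\ss\sep\tt)\sep\vv$, then, as $\nmr(\ss\sep\tt)=\nm(\ss\sep\tt)$, equation~\eqref{E:NormSys3} yields $\nm(\ww_1)=\nm(\ww)$, and by induction $\ww\fl_\RR^*\ww'$ implies $\nm(\ww')=\nm(\ww)$. Consequently, whenever $\ww\fl_\RR^*\ww_1$ and $\ww\fl_\RR^*\ww_2$, we get $\nm(\ww_1)=\nm(\ww_2)=\nm(\ww)$, and by the normalising property both $\ww_1$ and~$\ww_2$ reduce to this common word. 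This argument simultaneously shows that $\nm(\ww)$ is the \emph{unique} $\RR$-normal form of~$\ww$, a fact I reuse in~\ITEM2 and~\ITEM3.

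For~\ITEM2, I would first observe that \emph{normalising} and \emph{confluent} together make the $\RR$-normal form of each word unique, so~\eqref{E:NormOfRewr} genuinely defines a map~$\nm$. Checking that $(\SS,\nm)$ is a normalisation is then routine: $\nm$ is length-preserving since $\RR$ is quadratic, giving~\eqref{E:NormSys1}; words of length~$\le 1$ carry no rule, giving~\eqref{E:NormSys2}; and closure of $\fl_\RR$ under context gives $\uu\sep\ww\sep\vv\fl_\RR^*\uu\sep\nm(\ww)\sep\vv$, so both sides share a normal form, which is~\eqref{E:NormSys3}. To see it is a normalisation \emph{for~$\MM$}, I would invoke Lemma~\ref{L:NormSysCharacterisation}: \eqref{E:NormMap1} holds because each rule is a defining relation, so $\fl_\RR$ preserves~$\ev$ and $\ev(\nm(\ww))=\ev(\ww)$; \eqref{E:NormMap2} is the delicate one, where $\ev(\ww)=\ev(\ww')$ means $\ww$ and~$\ww'$ are linked by the congruence generated by~$\RR$, namely $\leftrightarrow_\RR^*$, whence confluence gives the Church--Rosser property, so they admit a common reduct and, by uniqueness of normal forms, $\nm(\ww)=\nm(\ww')$. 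The quadratic conditions follow directly: a word is $\nm$-normal iff $\RR$-normal iff each length-two factor is $\nmr$-invariant, which is~\eqref{E:Quad1}, and reading off the positions of a reduction $\ww\fl_\RR^*\nm(\ww)$ supplies the sequence~$\uu$ with $\nm(\ww)=\nmr_{\!\uu}(\ww)$ demanded by~\eqref{E:Quad2}.

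Finally, for~\ITEM3: starting from a quadratic normalisation, the system~$\RR$ of~\ITEM1 has $\nm(\ww)$ as its unique $\RR$-normal form, so~\eqref{E:NormOfRewr} reconstructs exactly~$\nm$. Starting from a system~$\RR$, a rule $\ss\sep\tt\fl\ww'$ being reduced forces $\ww'$ to be $\RR$-normal and hence equal to the normal form $\nm(\ss\sep\tt)=\nmr(\ss\sep\tt)$ of its source, with $\ss\sep\tt\neq\ww'$; conversely any pair with $\ss\sep\tt\neq\nmr(\ss\sep\tt)$ is non-$\RR$-normal and so carries the unique rule $\ss\sep\tt\fl\nmr(\ss\sep\tt)$, so~\eqref{E:QuadRewrOfNorm} reconstructs exactly~$\RR$. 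The main obstacle throughout is the confluence/Church--Rosser bridge underlying both~\ITEM1 and~\ITEM2: the rest is bookkeeping with~\eqref{E:NormSys3} and Proposition~\ref{P:QuadPres}, but passing between ``$\ev$-equal'' and ``same normal form'' genuinely needs that reductions both preserve~$\nm$ and can always be completed to~$\nm(\ww)$.
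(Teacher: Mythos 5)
Your proof is correct and follows essentially the same route as the paper's: normalisation and confluence of~$\RR$ via~\eqref{E:Quad2} together with the fact that rewriting steps preserve~$\nm$ by~\eqref{E:NormSys3}, the presentation via~\eqref{E:QuadPres}, and uniqueness of $\RR$-normal forms for the converse. You simply spell out some steps the paper leaves terse (the reducedness check, the Church--Rosser bridge through Lemma~\ref{L:NormSysCharacterisation} in~\ITEM2, and the details of~\ITEM3, which the paper calls straightforward).
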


\begin{proof}
\ITEM1 By definition, $(\SS,\RR)$ is quadratic and reduced, and, for all $\SS$-words~$\ww$ and~$\ww'$, we have $\ww\fl_{\RR}^*\ww'$ if, and only if, $\ww'=\nmr_{\!\uu}(\ww)$ holds for some sequence~$\uu$ of positions. Thus, by~\eqref{E:Quad2}, $\RR$ is normalising. Moreover, the conjunction of $\ww\fl_{\RR}^*\ww_1$ and $\ww\fl_{\RR}^*\ww_2$ implies $\nm(\ww_1)=\nm(\ww_2)$, hence $(\SS,\RR)$ is confluent by~\eqref{E:Quad2}. Finally, $(\SS, \RR)$ is a presentation of~$\MM$ by~\eqref{E:QuadPres}.

\ITEM2 Since~$(\SS,\RR)$ is normalising and confluent, every $\SS$-word admits exactly one $\RR$-normal form, so~\eqref{E:NormOfRewr} makes sense and implies that~$\MM$ admits the presentation~\eqref{E:NormSys4}. Next, since~$\RR$ is quadratic, $\nm$ is length-preserving and preserves generators. Moreover, the $\RR$-normal forms of $\uu\sep\ww\sep\vv$ and of $\uu\sep\nm(\ww)\sep\vv$ are equal, whence $\nm(\uu\sep\ww\sep\vv)=\nm(\uu\sep\nm(\ww)\sep\vv)$. So~$(\SS,\nm)$ is a normalisation for~$\MM$. Moreover, the definition of~$\nm$ implies that it satisfies both~\eqref{E:Quad1} and~\eqref{E:Quad2}. 

\ITEM3 The proof is straightforward.
\end{proof}

Note that the rewriting system associated to a quadratic normalisation does not always \emph{terminate}, meaning that there may exist infinite rewriting sequences $\ww_0\fl_{\RR}\ww_1\fl_{\RR}\ww_2\fl_{\RR}\pdots$, as shown in Section~\ref{S:Termination}.

\begin{exam}\label{X:AbelianRewr}
If $(\SS,\nm)$ is the quadratic normalisation for the free commutative monoid~$\NNNN^{(\SS)}$ of Example~\ref{X:Abelian}, the associated quadratic rewriting system~$(\SS, \RR)$ contains one rule $\tt\ss\fl\ss\tt$ for all $\ss, \tt$ in~$\SS$ with $\tt>\ss$. By Proposition~\ref{P:QuadNormRewr}~\ITEM1, this rewriting system is normalising and confluent.
\end{exam}

Proposition~\ref{P:QuadNormRewr}\ITEM1 can be declined to account for a neutral element and the termination properties of the corresponding rewriting systems are related.

\begin{prop}\label{P:NeutralQuadRewr}
\ITEM1 If $(\SS,\nm)$ is a normalisation mod~$\ee$ for a monoid~$\MM$, then we obtain a reduced, normalising and confluent rewriting system~$(\SS_{\ee}, \RR_\ee)$ presenting~$\MM$ by putting 
\begin{equation}
\RR_\ee = \{\ss\sep\tt \fl \pi_{\ee}(\nmr(\ss\sep\tt)) \mid \ss,\tt\in \SS_{\ee}, \ss\sep\tt\neq\nmr(\ss\sep\tt)\}.
\end{equation}

\ITEM2 If the rewriting system~$(\SS,\RR)$ of~\eqref{E:QuadRewrOfNorm} terminates, then so does~$(\SS_\ee,\RR_\ee)$.
\end{prop}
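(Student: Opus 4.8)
The plan is to handle the two items separately, relying throughout on the presentation~\eqref{E:QuadNeutral2} of~$\MM$ supplied by Proposition~\ref{P:QuadPres}\ITEM3 and on the fact, recorded in Lemma~\ref{L:Neutral}\ITEM3, that $\nm$ always collects the copies of~$\ee$ at the right end of a word. For item~\ITEM1, the presentation claim is immediate, since the rules of~$\RR_\ee$ are exactly the nontrivial relations of~\eqref{E:QuadNeutral2}, the trivial ones being those with $\ss\sep\tt = \nmr(\ss\sep\tt)$. That $\RR_\ee$ is reduced is then a direct check: a length-two word is its own only length-two factor, so the single rule applicable to a left-hand side~$\ss\sep\tt$ is that very rule, while each right-hand side $\pi_\ee(\nmr(\ss\sep\tt)) = \pi_\ee(\nm(\ss\sep\tt))$ is $\RR_\ee$-normal by the argument given next.

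To secure normalisation and confluence together, I would study the map $\Phi(\ww) = \pi_\ee(\nm(\ww))$ on $\SSm$-words and establish three facts. (a)~$\Phi(\ww)$ is $\RR_\ee$-normal: by Lemma~\ref{L:Neutral}\ITEM3 it is a prefix of the $\nm$-normal word~$\nm(\ww)$, so all its length-two factors are $\nm$-normal by~\eqref{E:Quad1} and no rule applies. (b)~$\ww \fl_{\RR_\ee}^* \Phi(\ww)$: using~\eqref{E:Quad2}, write $\nm(\ww) = \nmr_{\!\uu}(\ww)$ and follow the intermediate words $\ww = \vv_0, \vv_1 \wdots \vv_n = \nm(\ww)$; after projecting by~$\pi_\ee$, a step rewriting an $\SSm$-factor gives an $\RR_\ee$-step or nothing, while a step touching a copy of~$\ee$ leaves $\pi_\ee$ unchanged by neutrality~\eqref{E:QuadNeutral1}, so $\pi_\ee(\vv_{k-1}) \fl_{\RR_\ee}^* \pi_\ee(\vv_k)$ at each stage. (c)~$\Phi$ is $\RR_\ee$-invariant: if $\ww \fl_{\RR_\ee} \ww'$ via $\ss\sep\tt \fl \pi_\ee(\nmr(\ss\sep\tt))$, then writing $\nmr(\ss\sep\tt) = \pi_\ee(\nmr(\ss\sep\tt)) \sep \ee^m$ and applying~\eqref{E:NormSys3}, then Lemma~\ref{L:Neutral}\ITEM2 to push~$\ee^m$ to the right, yields $\nm(\ww) = \nm(\ww') \sep \ee^m$, whence $\Phi(\ww) = \Phi(\ww')$. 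From (a)--(c), every~$\ww$ reduces to the $\RR_\ee$-normal word $\Phi(\ww)$, and any $\RR_\ee$-normal reduct~$\ww_1$ of~$\ww$ satisfies $\ww_1 = \Phi(\ww_1) = \Phi(\ww)$: the first equality because an $\RR_\ee$-normal word has all length-two factors $\nm$-normal, hence is $\nm$-normal by~\eqref{E:Quad1}, the second by invariance. So the $\RR_\ee$-normal form is unique, which with normalisation gives confluence.

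For item~\ITEM2, I would argue contrapositively, lifting a hypothetical infinite sequence $\ww_0 \fl_{\RR_\ee} \ww_1 \fl_{\RR_\ee} \pdots$ to an infinite $\RR$-sequence by padding with trailing copies of~$\ee$. Maintaining $\SS$-words $\wwh_k = \ww_k \sep \ee^{N_k}$, a step $\ww_k = \uu_1 \sep \ss\sep\tt \sep \uu_2 \fl_{\RR_\ee} \uu_1 \sep \pi_\ee(\nmr(\ss\sep\tt)) \sep \uu_2$ lifts as follows: since $\ss$ and~$\tt$ are adjacent in the $\ee$-free prefix of~$\wwh_k$, the genuine $\RR$-rule $\ss\sep\tt \fl \nmr(\ss\sep\tt) = \pi_\ee(\nmr(\ss\sep\tt)) \sep \ee^m$ applies (genuine because $\ss\sep\tt \neq \nmr(\ss\sep\tt)$), and the $m$ new copies of~$\ee$ are then carried rightward past the $\SSm$-word~$\uu_2$ by the $\RR$-rules $\ee\sep\ss' \fl \ss'\sep\ee$ furnished by neutrality~\eqref{E:QuadNeutral1}, reaching $\wwh_{k+1} = \ww_{k+1} \sep \ee^{N_k+m}$. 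Each $\RR_\ee$-step thus lifts to at least one $\RR$-step, so an infinite $\RR_\ee$-sequence produces an infinite $\RR$-sequence, contradicting the termination of~$\RR$.

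The main obstacle is that, unlike~$\RR$, the system~$\RR_\ee$ is not length-preserving: the projection~$\pi_\ee$ may delete letters, so neither confluence nor termination is inherited automatically. The crux is therefore the careful accounting of the copies of~$\ee$ created by the rules, governed by Lemma~\ref{L:Neutral}\ITEM2 and~\ITEM3---both in the invariance identity $\nm(\ww) = \nm(\ww') \sep \ee^m$ behind confluence, and in the $\ee$-transport that drives the termination lift.
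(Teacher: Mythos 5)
Your proof is correct and follows essentially the same route as the paper: item \ITEM2 is exactly the paper's lifting argument (one genuine $\RR$-step followed by $\ee$-transport steps $\ee\sep\ss'\fl\ss'\sep\ee$), and item \ITEM1 is the natural adaptation of the proof of Proposition~\ref{P:QuadNormRewr}\ITEM1, which the paper only sketches as ``similar''. Your explicit verification that $\pi_\ee\comp\nm$ is $\RR_\ee$-invariant, produces $\RR_\ee$-normal words, and is reachable by $\RR_\ee$-steps is precisely the detail that adaptation requires, so there is nothing to object to.
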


\begin{proof}
\ITEM1 Similar to Proposition~\ref{P:QuadNormRewr}\ITEM1. 

\ITEM2 If $\ww\fl_{\RR_\ee}\ww'$ holds for $\SS_\ee$-words~$\ww,\ww'$, then, by definition, there exists a position~$\ii$ satisfying $\ww'=\pi_{\ee}(\nmr_{\!\ii}(\ww))$. Thus, there exists a sequence of positions $\uu=\ii_1\sep\pdots\sep\ii_{\pp}$ satisfying $\nmr_{\!\uu}(\nmr_{\!\ii}(\ww))=\ww'\sep\ee^{\mm}$ for some~$\mm$, and where each~$\nmr_{\!\ii_{\jj}}$ acts according to a rule~$\ee\sep\ss\fl\ss\sep\ee$. Hence, each sequence $\ww_0\fl_{\RR_\ee}\ww_1\fl_{\RR_\ee}\pdots\fl_{\RR_\ee}\ww_{\ell}$ in~$\SS_{\ee}^*$ lifts to a sequence 
$$\ww_0 \fl_\RR \ww_0' \fl_\RR^* \ww_1\sep\ee^{\mm_1}
\fl_\RR \ww'_1\sep\ee^{\mm_1} \fl_\RR^* \ww_2 \sep \ee^{\mm_1 + \mm_2} \fl_\RR^* \pdots \fl_\RR^* \ww_{\ell}\sep\ee^{\mm_1+\pdots+\mm_{\ell}}$$
in~$\SS^*$. So, if $(\SS_\ee,\RR_\ee)$ does not terminate, neither does~$(\SS,\RR)$.
\end{proof}

\subsection{The class of a quadratic normalisation}
\label{SS:Class}

By definition, if $(\SS, \nm)$ is a quadratic normalisation and $\ww$ is an $\SS$-word, then $\nm(\ww)$ is obtained by successively applying the restriction~$\nmr$ of~$\nm$ to various length-two factors. We shall now investigate the possibilities and introduce a parameter, called the class, evaluating the complexity of the procedure for length-three $\SS$-words.

For such a~$\ww$, there must exist a finite sequence~$\uu$ of positions~$1$ and~$2$, such that, with the convention of Notation~\ref{N:Local}, $\nm(\ww)$ is equal to~$\nmr_{\!\uu}(\ww)$. As $\nmr$ is idempotent, repeating~$1$ or~$2$ in the sequence~$\uu$ is useless, and it is enough to consider alternating words~$\uu$ of the form~$121...$ or $212...$, omitting the separators to make reading easier. For $\mm \ge 0$, we write $\ai{\mm}$ for the alternating word~$121...$ of length~$\mm$, and similarly for~$\aii\mm$. So, for instance, $\nmr_{\!\aii4}$ will stand for $\nmr_{\!2121}$, that is, for the composition of $\nmr_{\!2}$, $\nmr_{\!1}$, $\nmr_{\!2}$, and $\nmr_{\!1}$ with $\nmr_{\!2}$ applied first. According to the above discussion, if $(\SS, \nm)$ is a quadratic normalisation, then, for every length-three $\SS$-word~$\ww$, there exists~$\mm$ such that $\nm(\ww)$ is $\nmr_{\!\ai\mm}(\ww)$ or $\nmr_{\!\aii\mm}(\ww)$.

\begin{defi}
For~$\cc$ a natural number, we say that a quadratic normalisation~$(\SS, \nm)$ is \emph{of left-class}~$\cc$ if $\nm(\ww) = \nmr_{\!\ai\cc}(\ww)$ holds for every~$\ww$ in~$\Pow\SS3$. Symmetrically, we say that $(\SS, \nm)$ is \emph{of right-class}~$\cc$ if $\nm(\ww) = \nmr_{\!\aii\cc}(\ww)$ holds for every~$\ww$ in~$\Pow\SS3$. We say that $(\SS, \nm)$ is \emph{of class~$(\cc, \cc')$} if it is of left-class~$\cc$ and right-class~$\cc'$.
\end{defi}

\begin{exam}\label{X:Abelian3}
Let $(\SS, \nm)$ be the lexicographic normalisation for~$\NNNN^\SS$ of Example~\ref{X:Abelian}. For $\card\SS = 1$, there is only one length-three $\SS$-word, which is $\nm$-normal, so $(\SS, \nm)$ is of class~$(0,0)$. Assume now $\card\SS \ge 2$. Then, one checks that, for all~$\rr, \ss, \tt$ in~$\SS$, the words $\nmr_{\!121}(\rr\sep\ss\sep \tt)$ and $\nmr_{\!212}(\rr\sep\ss\sep \tt)$ are $\nm$-normal (and equal), so $(\SS, \nm)$ is of class~$(3,3)$. On the other hand, assuming $\tta < \ttb$, we find $\nmr_{\!12}(\ttb\ttb\tta) = \ttb\tta\ttb$ and $\nmr_{\!21}(\ttb\tta\tta) = \tta\ttb\tta$, so $(\SS, \nm)$ is neither of left-class~$2$ nor of right-class~$2$. 

To give another example, consider $\SS = \{\tta, \ttb\}$ and~$\nm$ defined by $\nm(\ww) = \tta^{\LG\ww}$ if~$\ww$ contains an even number of letters~$\ttb$, and $\nm(\ww) = \tta^{\LG\ww-1}\ttb$ otherwise. One checks that $(\SS, \nm)$ is a quadratic normalisation for the monoid $\PRESp{\tta, \ttb}{\tta\ttb = \ttb\tta, \tta^2 = \ttb^2}$. Then, a case-by-case checking on~$\Pow\SS3$ shows that $(\SS, \nm)$ is of class~$(2, 3)$, but neither of left-class~$1$ nor of right-class~$2$, as shows the worst-case example
\[
\ttb\tta\tta 
\quad (\ \buildrel {\textstyle\nmr_{\!2}\ } \over \longrightarrow \ \ttb\tta\tta\ ) 
\quad \buildrel {\textstyle \nmr_{\!1}\ } \over \longrightarrow \quad \tta\ttb\tta 
\quad \buildrel {\textstyle \nmr_{\!2}\ } \over \longrightarrow \quad \tta\tta\ttb.
\]
\end{exam}

The following observation, already implicit in the above example, will be crucial.

\begin{lemm}\label{L:Class}
Assume that~$(\SS, \nm)$ is a quadratic normalisation.

\ITEM1 If~$\ww$ is in $\Pow{\SS}{3}$, then $\nm(\ww) = \nmr_{\!\ai\cc}(\ww)$ implies $\nm(\ww) = \nmr_{\!\ai{\cc+1}}(\ww)$.

\ITEM2 If $(\SS, \nm)$ is of left-class~$\cc$, then it is of left-class~$\cc'$ for every~$\cc'$ with $\cc' \ge \cc$, and of right-class~$\cc''$ for every~$\cc''$ with $\cc'' \ge \cc+1$.
\end{lemm}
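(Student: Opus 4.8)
The plan is to prove part~\ITEM1 first and then derive both halves of part~\ITEM2 from it, avoiding any separate left--right mirror argument. For part~\ITEM1 the key remark is purely combinatorial: the alternating word $\ai{\cc+1}$ is obtained from $\ai{\cc}$ by appending one further index $\jj \in \{1, 2\}$ at the right (the one continuing the alternation), so that, with the convention that $\nmr_{\!\uu}$ applies its leftmost index first, one has $\nmr_{\!\ai{\cc+1}}(\ww) = \nmr_{\!\jj}(\nmr_{\!\ai\cc}(\ww))$. Now assume $\nm(\ww) = \nmr_{\!\ai\cc}(\ww)$. Since $\nm$ is idempotent, $\nm(\ww)$ is $\nm$-normal, and I would invoke \eqref{E:Quad1}: both length-two factors of the length-three word $\nm(\ww)$ are $\nm$-normal, hence $\nmr$-invariant. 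Therefore $\nmr_{\!1}$ and $\nmr_{\!2}$ both fix $\nm(\ww)$, so $\nmr_{\!\ai{\cc+1}}(\ww) = \nmr_{\!\jj}(\nm(\ww)) = \nm(\ww)$, as wanted.

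The first assertion of part~\ITEM2 follows immediately by induction on~$\cc'$: being of left-class~$\cc$ means $\nm(\ww) = \nmr_{\!\ai\cc}(\ww)$ for every $\ww$ in $\Pow\SS3$, and part~\ITEM1 upgrades this identity from $\cc$ to $\cc+1$, and so on up to any $\cc' \ge \cc$. For the second assertion, the combinatorial input is that $\aii{\cc''}$ factors as a leading index~$2$ followed by $\ai{\cc''-1}$, whence $\nmr_{\!\aii{\cc''}}(\ww) = \nmr_{\!\ai{\cc''-1}}(\nmr_{\!2}(\ww))$. I would combine this with two facts. First, prepending one $\nmr_{\!2}$ step is harmless for the normal form: writing $\ww = \rr\sep\ss\sep\tt$, we have $\nmr_{\!2}(\ww) = \rr\sep\nmr(\ss\sep\tt)$, and \eqref{E:NormSys3} gives $\nm(\nmr_{\!2}(\ww)) = \nm(\rr\sep\nm(\ss\sep\tt)) = \nm(\ww)$. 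Second, since $\cc'' \ge \cc+1$ we have $\cc''-1 \ge \cc$, so by the first assertion $(\SS,\nm)$ is of left-class~$\cc''-1$, giving $\nm(\vv) = \nmr_{\!\ai{\cc''-1}}(\vv)$ for every length-three word~$\vv$, in particular for $\vv = \nmr_{\!2}(\ww)$. Chaining these, $\nmr_{\!\aii{\cc''}}(\ww) = \nmr_{\!\ai{\cc''-1}}(\nmr_{\!2}(\ww)) = \nm(\nmr_{\!2}(\ww)) = \nm(\ww)$, which is precisely right-class~$\cc''$.

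I do not expect a genuine obstacle here: the whole argument rests only on \eqref{E:Quad1}, on idempotence of $\nmr$, and on \eqref{E:NormSys3}. The one point that needs care is the alternating-word bookkeeping --- the factorisations $\ai{\cc+1} = \ai{\cc}\,\jj$ and $\aii{\cc''} = 2\,\ai{\cc''-1}$, read against the composition-order convention so that the latter decomposition becomes a \emph{prefix} step $\nmr_{\!2}$ rather than a suffix one. Recognising that this prefix step is exactly the harmless move controlled by \eqref{E:NormSys3} is what lets part~\ITEM1 alone power both the monotonicity of the left-class and the transfer from left-class~$\cc$ to right-class~$\cc+1$.
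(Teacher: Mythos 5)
Your proof is correct and follows essentially the same route as the paper: part (i) via \eqref{E:Quad1} plus idempotence of~$\nm$, the left-class monotonicity by iterating~(i), and the right-class claim via the factorisation $\aii{\cc''} = 2 \sep \ai{\cc''-1}$ combined with~\eqref{E:NormSys3}. The only cosmetic difference is that you upgrade the left-class to~$\cc''-1$ before converting to a right-class statement, whereas the paper converts at level~$\cc$ to obtain right-class~$\cc+1$ and then increments on the right; both orderings rest on exactly the same ingredients.
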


\begin{proof}
\ITEM1 Assume $\nm(\ww) = \nmr_{\!\ai\cc}(\ww)$. By~\eqref{E:Quad1}, $\nm(\ww)$ is invariant both under~$\nmr_{\!1}$ and $\nmr_{\!2}$, since it is $\nm$-normal. Hence we have $\nmr_{\!\ai{\cc+1}}(\ww) = \nmr_{\!\ai\cc}(\ww)$.

\ITEM2 Assume that $(\SS, \nm)$ is of left-class~$\cc$. Then~\ITEM1 implies $\nm(\ww) = \nmr_{\!\ai{\cc+1}}(\ww)$ for every~$\ww$ in~$\Pow\SS3$, so $(\SS, \nm)$ is of left-class~$\cc+1$ as well and, from there, it is of left-class~$\cc'$ for every~$\cc' \ge \cc$. For~$\ww$ in~$\Pow\SS3$, the assumption and~\eqref{E:NormSys3} give $\nm(\ww) = \nmr_{\!\ai\cc}(\nmr_{\!2}(\ww)) = \nmr_{\!\aii{\cc+1}}(\ww)$. Hence $(\SS, \nm)$ is of right-class~$\cc+1$ and, from there, of right-class~$\cc''$ for every~$\cc''$ with $\cc'' \ge \cc+1$. 
\end{proof}

Define the \emph{minimal left-class} of a quadratic normalisation~$(\SS, \nm)$ to be the smallest integer~$\cc$ such that $(\SS, \nm)$ is of left-class~$\cc$, if such an integer exists, and~$\infty$ otherwise. We introduce the symmetric notion of \emph{minimal right-class}, and define the \emph{minimal class} to be the pair made of the minimal left-class and the minimal right-class.

\begin{lemm}\label{L:Constraints}
The minimal class of a quadratic normalisation~$(\SS, \nm)$ is either of the form~$(\cc, \cc')$ with $\vert\cc' - \cc\vert \le 1$, or~$(\infty, \infty)$. If~$\SS$ is finite, the value~$(\infty, \infty)$ is excluded.
\end{lemm}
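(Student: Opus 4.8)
The plan is to prove the two assertions of Lemma~\ref{L:Constraints} separately, using Lemma~\ref{L:Class}\ITEM2 as the main engine for the first part and a counting/pigeonhole argument for the second.

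For the symmetry constraint $\vert \cc' - \cc \vert \le 1$, suppose the minimal class is $(\cc, \cc')$ with both entries finite. By definition, $(\SS, \nm)$ is of left-class~$\cc$, so Lemma~\ref{L:Class}\ITEM2 guarantees it is of right-class~$\cc+1$, whence the minimal right-class satisfies $\cc' \le \cc+1$. By the symmetric statement of Lemma~\ref{L:Class}\ITEM2 (interchanging the roles of positions~$1$ and~$2$, equivalently applying~$\nmr_1$ first), being of right-class~$\cc'$ forces being of left-class~$\cc'+1$, so $\cc \le \cc'+1$. Combining the two inequalities gives $-1 \le \cc' - \cc \le 1$, which is exactly $\vert \cc' - \cc \vert \le 1$. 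It remains to rule out the mixed case where exactly one of $\cc, \cc'$ is finite: if, say, the minimal left-class~$\cc$ is finite, then left-class~$\cc$ forces right-class~$\cc+1$ by Lemma~\ref{L:Class}\ITEM2, so the minimal right-class is finite too; the symmetric implication handles the other side. Hence the minimal class is either $(\cc,\cc')$ with both entries finite and $\vert\cc'-\cc\vert \le 1$, or $(\infty,\infty)$.

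For the finiteness claim when~$\SS$ is finite, the idea is that there are only finitely many length-three words, and for each of them the normalisation process driven by $\nmr_{\!\ai{}}$ must stabilise. Fix $\ww \in \Pow\SS3$. Since $(\SS, \nm)$ is quadratic, \eqref{E:Quad2} provides a sequence of positions reaching $\nm(\ww)$, which by the discussion preceding the definition of class can be taken alternating, so $\nm(\ww) = \nmr_{\!\ai{\mm_\ww}}(\ww)$ for some integer~$\mm_\ww$ depending on~$\ww$. I would set $\cc = \max\{\mm_\ww \mid \ww \in \Pow\SS3\}$, which is a genuine maximum because $\card{\Pow\SS3} = (\card\SS)^3$ is finite. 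Then Lemma~\ref{L:Class}\ITEM1 lets me pad each individual sequence up to the common length~$\cc$, giving $\nm(\ww) = \nmr_{\!\ai\cc}(\ww)$ for every~$\ww$ simultaneously; thus $(\SS, \nm)$ is of left-class~$\cc$ and the minimal left-class is finite. The symmetric argument bounds the minimal right-class, so the value $(\infty,\infty)$ is excluded.

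The main obstacle I anticipate is the seemingly innocent appeal to ``the symmetric statement'' of Lemma~\ref{L:Class}\ITEM2: that lemma is phrased for left-class, and I must be sure the mirror version (right-class~$\cc$ implies left-class~$\cc+1$) holds verbatim under the interchange of positions, since the definitions of left- and right-class are set up symmetrically in terms of $\ai{}$ versus $\aii{}$ words. A careful reading confirms that the proof of Lemma~\ref{L:Class} uses only the idempotence of~$\nmr$ and equation~\eqref{E:NormSys3}, both of which are symmetric in the two positions, so the mirror statement is immediate; nonetheless this is the step where one could accidentally lose a unit in the index and produce the wrong bound, so I would state the symmetric inequalities explicitly rather than waving at symmetry.
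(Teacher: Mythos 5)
Your proof is correct and follows essentially the same route as the paper: Lemma~\ref{L:Class}\ITEM2 and its mirror image give the two inequalities $\cc'\le\cc+1$ and $\cc\le\cc'+1$ (and rule out the mixed finite/infinite case), while for finite~$\SS$ one takes the maximum of the finitely many exponents~$\cc_\ww$ and pads with Lemma~\ref{L:Class}\ITEM1. The only point treated equally lightly in both arguments is the reduction of an arbitrary normalising sequence for a length-three word to one of the form~$\ai{\mm}$ (rather than~$\aii{\mm}$), which is harmless.
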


\begin{proof}
If the minimal left-class of~$(\SS, \nm)$ is a finite number~$\cc$, then Lemma~\ref{L:Class} implies that $(\SS, \nm)$ is of right-class~$\cc+1$; hence the minimal right-class~$\cc'$ satisfies $\cc' \le \cc+1$ and, for symmetric reasons, we have $\cc \le \cc' +1$, whence $\vert\cc' - \cc\vert \le 1$.

The assumption that $(\SS, \nm)$ is quadratic implies, for every~$\ww$ in~$\Pow\SS3$, the existence of a smallest finite number~$\cc_{\ww}$ satisfying $\nm(\ww) = \nmr_{\!\ai{\cc_{\ww}}}(\ww)$. If~$\SS$ is finite, the supremum of all numbers~$\cc_\ww$ for~$\ww$ in~$\Pow\SS3$ is finite, and Lemma~\ref{L:Class}~\ITEM1 implies that~$\cc$ is the minimal left-class of~$(\SS, \nm)$.
\end{proof}

The class of a normalisation~$(\SS, \nm)$ can be characterised in terms of algebraic relations exclusively involving the map~$\nmr$.

\begin{prop}\label{P:Class}
A quadratic normalisation~$(\SS, \nm)$ is of left-class~$\cc$ if, and only if, $\nmr$ satisfies
\begin{equation}\label{E:Class1}
\nmr_{\!\ai\cc} = \nmr_{\!\ai{\cc+1}} = \nmr_{\!\aii{\cc+1}},
\end{equation}
and of class~$(\cc, \cc)$ if, and only if, $\nmr$ satisfies
\begin{equation}\label{E:Class2}
\nmr_{\!\ai\cc} = \nmr_{\!\aii\cc}.
\end{equation}
\end{prop}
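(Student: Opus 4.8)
The plan is to reduce both equivalences to a single normality criterion and then to pure bookkeeping on the idempotent map~$\nmr$. The criterion is as follows: for any~$\ww$ in~$\Pow\SS3$ and any position sequence~$\uu$, equation~\eqref{E:NormSys3} gives (by induction on the length of~$\uu$) $\nm(\nmr_{\!\uu}(\ww)) = \nm(\ww)$, so $\nmr_{\!\uu}(\ww) = \nm(\ww)$ holds if, and only if, $\nmr_{\!\uu}(\ww)$ is $\nm$-normal; and by the static characterisation~\eqref{E:Quad1}, a length-three word is $\nm$-normal exactly when it is invariant under both~$\nmr_1$ and~$\nmr_2$. Taking $\uu = \ai\cc$, this makes the property ``$(\SS,\nm)$ is of left-class~$\cc$'' equivalent to the two identities $\nmr_1\comp\nmr_{\!\ai\cc} = \nmr_{\!\ai\cc}$ and $\nmr_2\comp\nmr_{\!\ai\cc} = \nmr_{\!\ai\cc}$ on~$\Pow\SS3$.

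For the first equivalence I would then observe that, among $\nmr_1\comp\nmr_{\!\ai\cc}$ and $\nmr_2\comp\nmr_{\!\ai\cc}$, the one that repeats the last position of~$\ai\cc$ equals $\nmr_{\!\ai\cc}$ automatically by idempotency of~$\nmr$, while the other is exactly $\nmr_{\!\ai{\cc+1}}$. Hence, for $\cc\ge1$, left-class~$\cc$ reduces to the single equality $\nmr_{\!\ai\cc} = \nmr_{\!\ai{\cc+1}}$, the first half of~\eqref{E:Class1}. To obtain the remaining equality $\nmr_{\!\ai\cc} = \nmr_{\!\aii{\cc+1}}$ in the direct implication, I would invoke Lemma~\ref{L:Class}: left-class~$\cc$ forces right-class~$\cc+1$, so $\nmr_{\!\aii{\cc+1}}$ also coincides with~$\nm$ on~$\Pow\SS3$, hence with~$\nmr_{\!\ai\cc}$. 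Conversely, assuming~\eqref{E:Class1}, the first equality already yields left-class~$\cc$ through the criterion; the second equality is needed only to cover the degenerate case $\cc=0$, where $\ai\cc$ is empty and the idempotency shortcut is unavailable, and it is precisely what supplies invariance under the second position there. This keeps the statement uniform in~$\cc$.

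For the second equivalence, the direct implication is immediate from the criterion: class~$(\cc,\cc)$ says that both $\nmr_{\!\ai\cc}$ and $\nmr_{\!\aii\cc}$ agree with~$\nm$ on~$\Pow\SS3$, hence with each other, which is~\eqref{E:Class2}. For the converse I would exploit that, for every $\cc\ge1$, the alternating words $\ai\cc$ and $\aii\cc$ terminate in different positions. Writing $\vv = \nmr_{\!\ai\cc}(\ww) = \nmr_{\!\aii\cc}(\ww)$, idempotency makes $\vv$ invariant under the last position of~$\ai\cc$ and, using the second presentation of~$\vv$, under the last position of~$\aii\cc$; since these are the two distinct positions~$1$ and~$2$, $\vv$ is $\nm$-normal. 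The criterion then gives simultaneously $\nm(\ww) = \nmr_{\!\ai\cc}(\ww)$ and $\nm(\ww) = \nmr_{\!\aii\cc}(\ww)$, that is left-class~$\cc$ and right-class~$\cc$, so class~$(\cc,\cc)$.

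I do not anticipate a genuine obstacle: once the criterion is in place, each direction is one line. The only point demanding care is the parity bookkeeping --- correctly identifying, for each of $\ai\cc$, $\ai{\cc+1}$, $\aii\cc$ and $\aii{\cc+1}$, which of the positions~$1$ and~$2$ is applied last --- so that the split between ``automatic by idempotency'' and ``supplied by hypothesis'' is assigned to the correct position. I would phrase the relevant steps in terms of ``the last position of the word'' rather than its numerical value, to avoid an explicit even/odd case distinction, and handle the boundary value $\cc=0$ (where, for the second equivalence, one restricts to $\cc\ge1$) separately.
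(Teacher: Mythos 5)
Your proposal is correct and follows essentially the same route as the paper: the same normality criterion (a word reached from $\ww$ by applying copies of $\nmr$ equals $\nm(\ww)$ exactly when it is invariant under $\nmr_{\!1}$ and $\nmr_{\!2}$, via \eqref{E:NormSys3} and \eqref{E:Quad1}), the same use of Lemma~\ref{L:Class} for the direct implications, and the same idempotency bookkeeping on the last position of the alternating words for the converses. Your converse for \eqref{E:Class2} is marginally more direct than the paper's (which first derives \eqref{E:Class1} by pre-composing with $\nmr_{\!1}$ and $\nmr_{\!2}$ and then invokes the first part), and your explicit handling of the degenerate case $\cc=0$ is a welcome precision, but these are cosmetic differences.
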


\begin{proof}
Assume that $(\SS, \nm)$ is of left-class~$\cc$. For every~$\ww$ in~$\Pow\SS3$, Lemma~\ref{L:Class} gives $\nm(\ww) = \nmr_{\!\ai\cc}(\ww) = \nmr_{\!\ai{\cc+1}}(\ww) = \nmr_{\!\aii{\cc+1}}(\ww)$, whence~\eqref{E:Class1}. Conversely, assume~\eqref{E:Class1} and let~$\ww$ belong to~$\Pow\SS3$. If~$\cc$ is odd, we obtain
$$\nmr_{\!1}(\nmr_{\!\ai\cc}(\ww)) = \nmr_{\!1}(\nmr_{\!1}(\nmr_{\!\ai{\cc-1}}(\ww))) = \nmr_{\!\ai\cc}(\ww),$$
since~$\nmr$ is idempotent, and, by~\eqref{E:Class1}, 
$$\nmr_{\!2}(\nmr_{\!\ai\cc}(\ww)) = \nmr_{\!\ai{\cc+1}}(\ww) = \nmr_{\!\ai\cc}(\ww).$$
If~$\cc$ is even, a symmetric argument gives the same values. So, in all cases, the $\SS$-word $\nmr_{\!\ai\cc}(\ww)$ is invariant both under~$\nmr_{\!1}$ and~$\nmr_{\!2}$, hence it is $\nm$-normal. As this holds for every~$\ww$ in~$\Pow\SS3$, we conclude that $(\SS, \nm)$ is of left-class~$\cc$.

Assume that~$(\SS, \nm)$ is of class~$(\cc, \cc)$. By~\ITEM1, we have $\nmr_{\!\ai\cc} = \nmr_{\!\ai{\cc+1}} = \nmr_{\!\aii{\cc+1}}$ and, by the symmetric counterpart of~\ITEM1, we have $\nmr_{\!\aii\cc} = \nmr_{\!\aii{\cc+1}} = \nmr_{\!\ai{\cc+1}}$, whence~\eqref{E:Class2} by merging the values. Conversely, assume~\eqref{E:Class2} and let~$\ww$ belong to~$\Pow\SS3$. Applying~\eqref{E:Class2} to~$\nmr_{\!1}(\ww)$ gives $\nmr_{\!\ai\cc}(\nmr_{\!1}(\ww)) = \nmr_{\!\aii\cc}(\nmr_{\!1}(\ww))$, reducing to $\nmr_{\!\ai\cc}(\ww) = \nmr_{\!\ai{\cc+1}}(\ww)$, since~$\nmr_{\!1}$ is idempotent. Similarly, applying~\eqref{E:Class2} to~$\nmr_{\!2}(\ww)$ leads to $\nmr_{\!\aii{\cc+1}}(\ww) = \nmr_{\!\aii\cc}(\ww)$. Merging the results and applying~\eqref{E:Class2} to~$\ww$, we deduce
$$\nmr_{\!\ai{\cc+1}}(\ww) = \nmr_{\!\ai\cc}(\ww) = \nmr_{\!\aii\cc}(\ww) = \nmr_{\!\aii{\cc+1}}(\ww).$$
As this holds for every~$\ww$ in~$\Pow\SS3$, \eqref{E:Class1} is satisfied, so, by~\ITEM1, $(\SS, \nm)$ is of left-class~$\cc$. A symmetric argument implies that $(\SS, \nm)$ is of right-class~$\cc$.
\end{proof}

The following example shows that the minimal left-class of a quadratic normalisation can be an arbitrarily high integer. 

\begin{exam}\label{X:High3}
For $\nn \ge 2$, let $\SS_\nn = \{\tta, \ttb_1 \wdots \ttb_\nn\}$ and~$\RR_{\nn}$ consist of the rules $\tta\ttb_\ii \to \tta\ttb_{\ii+1}$ for~$\ii < \nn$ odd and $\ttb_\ii\tta \to \ttb_{\ii+1}\tta$ for~$\ii < \nn$ even. Then the rewriting system $(\SS_{\nn},\RR_{\nn})$ is convergent: termination is given by comparison of the number of~$\ttb_1$, then of~$\ttb_2$, then of~$\ttb_3$, etc.; confluence is obtained by observing that, for every minimal overlapping application of rules $\ttb_{\ii}\tta\ttb_{\jj}\fl_{\RR_{\nn}}\ttb_{\ii+1}\tta\ttb_{\jj}$ and $\ttb_{\ii}\tta\ttb_{\jj}\fl_{\RR_{\nn}}\ttb_{\ii}\tta\ttb_{\jj+1}$, we have $\ttb_{\ii+1}\tta\ttb_{\jj}\fl_{\RR_{\nn}}\ttb_{\ii+1}\tta\ttb_{\jj+1}$ and $\ttb_{\ii}\tta\ttb_{\jj}\fl_{\RR_{\nn}}\ttb_{\ii+1}\tta\ttb_{\jj+1}$. 
Let $(\SS_n,\nm_\nn)$ be the associated quadratic normalisation as defined in Proposition~\ref{P:QuadNormRewr}. For $\nn \ge 3$, the minimal class of~$(\SS_\nn, \nm_\nn)$ is $(\nn-1, \nn)$: length-three words that do not begin and finish with~$\tta$ are $\RR_n$-normal or become $\RR_n$-normal in one step, and the reduction of~$\tta\ttb_1\tta$ looks like
$$\tta\ttb_1\tta 
\quad (\ \buildrel {\textstyle \nmr_{\!2}\ } \over \longrightarrow \ \tta\ttb_1\tta\ ) 
\quad \buildrel {\textstyle \nmr_{\!1}\ } \over \longrightarrow \quad \tta\ttb_2\tta 
\quad \buildrel {\textstyle \nmr_{\!2}\ } \over \longrightarrow \quad \tta\ttb_3\tta 
\quad \buildrel {\textstyle \nmr_{\!1}\ } \over \longrightarrow \quad \pdots 
\quad \buildrel {\textstyle \nmr_-\ } \over \longrightarrow \quad \tta\ttb_\nn\tta,$$
implying that the minimal left-class is~$\nn-1$, and the minimal right-class is~$\nn$.
\end{exam}

The next example shows that the minimal left-class can be~$\infty$. (Putting~$\nn=\infty$ in Example~\ref{X:High3} provides a non-normalising system: $\tta\ttb_1\tta$ has no normal form.)

\begin{exam}
For $\nn \ge 2$, let $\SS_{\nn} = \{\tta_0 \wdots \tta_\nn\}$ and $\RR_{\nn}$ consist of the rules 
$$\tta_\ii\tta_\jj \to \tta_{\lfloor \frac{\scriptstyle\ii+\jj}{\scriptstyle2}\rfloor}\tta_{\lceil \frac{\scriptstyle\ii+\jj}{\scriptstyle2}\rceil} \text{\quad for~$\ii >\jj$}.$$
The rewriting system $(\SS_{\nn},\RR_{\nn})$ is convergent. Let $(\SS_{\nn},\nm_{\nn})$ be the associated quadratic normalisation. As in Example~\ref{X:Abelian}, the $\nm_\nn$-normal words are the lexicographically non-decreasing ones with respect to $\tta_1 < \pdots < \tta_\nn$. Then the minimal class of~$(\SS_\nn, \nm_\nn)$ is $(3{+}\lfloor\log_2\nn\rfloor, 3{+}\lfloor\log_2\nn\rfloor)$. Indeed, for $2^\pp \le \nn < 2^{\pp+1}$, the worst case for the left-class is attained by $\tta_{2^\pp}\tta_{2^\pp}\tta_0$: putting $\ii = \lfloor2^{\pp+1}/3\rfloor$, the latter $\SS_{\nn}$-word reduces to $\tta_\ii\tta_\ii\tta_{\ii+1}$ (even $\pp$) or $\tta_\ii\tta_{\ii+1}\tta_{\ii+1}$ (odd $\pp$) in $\pp + 3$ steps. Moreover, if we define $\SS_\infty$ to be the infinite set $\{\tta_0, \tta_1, ...\}$ and~$\nm_\infty$ associated as above, $(\SS_\infty, \nm_\infty)$ is a quadratic normalisation with minimal class $(\infty, \infty)$ as, for every~$\pp$, the reduction of $\tta_{2^\pp}\tta_{2^\pp}\tta_0$ requires $\pp+3$ steps. 
\end{exam}

\subsection{The $\pp$-class}\label{SS:pClass}

We now consider the normalisation of length-$\pp$ words for $\pp \ge 4$. If $(\SS, \nm)$ is a quadratic normalisation, then, by definition, one can transform a length-$\pp$ word~$\ww$ into~$\nm(\ww)$ by applying a finite number of elementary maps~$\nmr_{\!\ii}$ with $1 \le \ii < \pp$. Contrary to the case $\pp = 3$, there may exist many ways of composing these maps for $\pp \ge 4$: for instance, one can consider the left strategy consisting in always normalising the leftmost unreduced length-two factor of the current word, but this choice is arbitrary. Writing $\nmR\pp$ for the restriction of~$\nm$ to $\SS$-words of length~$\pp$, more canonical decompositions arise when one expresses~$\nmR4$ in terms of~$\nmR3$ and, more generally, $\nmR\pp$ in terms of~$\nmR{\pp-1}$. Then the situation is similar to~$3$ {\it vs.}~$2$, and a natural notion of $\pp$-class appears. 

\begin{defi}
For $\pp \ge 3$, we say that a quadratic normalisation~$(\SS, \nm)$ is \emph{of left-$\pp$-class}~$\cc$ if, for every~$\ww$ in~$\Pow\SS\pp$, we have $\nm(\ww) = \nmR{\pp-1}_{\ai\cc}$. Symmetrically, we say that $(\SS, \nm)$ is \emph{of right-$\pp$-class}~$\cc$ if, for every~$\ww$ in~$\Pow\SS\pp$, we have $\nm(\ww) = \nmR{\pp-1}_{\aii\cc}$. We say that $(\SS, \nm)$ is \emph{of $\pp$-class~$(\cc, \cc')$} if it is of left-$\pp$-class~$\cc$ and right-$\pp$-class~$\cc'$.
\end{defi}

Thus the left-class of Subsection~\ref{SS:Class} is the left-$3$-class, and similarly for the right-class and the class.

\begin{exam}
Consider the lexicographic normalisation $(\SS, \nm)$ of Example~\ref{X:Abelian3}. We saw that, for $\card\SS \ge 2$, the minimal ($3$)-class is $(3, 3)$. An easy induction shows that, for every~$\pp \ge 4$ and for every~$\ww$ in~$\Pow{\SS}\pp$, the words $\nmR{\pp-1}_{212}(\ww)$ and $\nmR{\pp-1}_{121}(\ww)$ are lexicographically nondecreasing, hence $\nm$-normal. Thus $(\SS, \nm)$ is of $\pp$-class~$(3, 3)$. Then, assuming $\card\SS \ge 2$ and $\tta < \ttb$, we find $\nmR{\pp-1}_{12}(\ttb^{\pp-1}\tta) = \ttb\tta\ttb^{\pp-2}$ and $\nmR{\pp-1}_{21}(\ttb\tta^{\pp-1}) = \tta^{\pp-2}\ttb\tta$, which are not $\nm$-normal. So $(\SS, \nm)$ is neither of left-$\pp$-class~$2$ nor of right-$\pp$-class~$2$. 
\end{exam}

The $\nm$-normality of $\SS$-words can be characterised in terms of $\nm$-normality of their length-$\pp$ factors, with a straightforward proof:

\begin{lemm}
If $(\SS, \nm)$ is a quadratic normalisation, then, for $\pp \ge2$, an $\SS$-word~$\ww$ with $\LG\ww \ge \pp$ is $\nm$-normal if, and only if, every length-$\pp$ factor of~$\ww$ is.
\end{lemm}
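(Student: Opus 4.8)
The plan is to reduce everything to the length-two characterisation~\eqref{E:Quad1}, using only the elementary fact that ``being a factor'' is transitive. Concretely, the whole statement will follow from two observations about factors: first, every length-two factor of a length-$\pp$ factor of~$\ww$ is itself a length-two factor of~$\ww$ (a factor of a factor is a factor); and second, provided $\LG\ww \ge \pp \ge 2$, conversely every length-two factor of~$\ww$ is contained in at least one length-$\pp$ factor of~$\ww$.

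For the forward implication, I would assume that~$\ww$ is $\nm$-normal and let~$\vv$ be an arbitrary length-$\pp$ factor. By the first observation, every length-two factor of~$\vv$ is a length-two factor of~$\ww$; since~$\ww$ is $\nm$-normal, \eqref{E:Quad1} shows that each such factor is $\nm$-normal, and then \eqref{E:Quad1} applied to~$\vv$ gives that~$\vv$ is $\nm$-normal.

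For the converse, I would assume that every length-$\pp$ factor of~$\ww$ is $\nm$-normal and let~$\uu$ be an arbitrary length-two factor of~$\ww$, say occupying positions~$\ii$ and~$\ii+1$. By the second observation I can choose a length-$\pp$ factor~$\vv$ of~$\ww$ covering positions~$\ii$ and~$\ii+1$; by hypothesis~$\vv$ is $\nm$-normal, so~\eqref{E:Quad1} gives that~$\uu$, being a length-two factor of~$\vv$, is $\nm$-normal. As~$\uu$ was an arbitrary length-two factor of~$\ww$, a final application of~\eqref{E:Quad1} yields that~$\ww$ is $\nm$-normal.

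The only point requiring a short verification, and the closest thing to an obstacle, is the existence claim in the second observation: that a window of length~$\pp$ fitting inside~$\ww$ can be slid so as to cover two prescribed adjacent positions. For a length-two factor at positions~$\ii,\ii+1$ with $1 \le \ii \le \LG\ww - 1$, I need a starting index~$\jj$ with $1 \le \jj \le \LG\ww - \pp + 1$ and $\jj \le \ii$ and $\ii+1 \le \jj + \pp - 1$; the hypotheses $\pp \ge 2$ and $\LG\ww \ge \pp$ guarantee that the interval of admissible~$\jj$ is nonempty, which I would confirm by checking separately the cases $\ii \le \LG\ww - \pp + 1$ and $\ii > \LG\ww - \pp + 1$. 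The degenerate case $\LG\ww = \pp$ is immediate, since then~$\ww$ is its own unique length-$\pp$ factor.
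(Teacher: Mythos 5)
Your argument is correct and is precisely the ``straightforward proof'' the paper alludes to without writing out: both directions reduce to the length-two characterisation~\eqref{E:Quad1} via the transitivity of the factor relation, together with the sliding-window observation that, under $\LG\ww \ge \pp \ge 2$, every length-two factor of~$\ww$ sits inside some length-$\pp$ factor. The index verification you include is exactly the only point needing care, and it checks out.
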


All properties of the $3$-class extend to the $\pp$-class for $\pp \ge 3$. In particular, when it is not~$(\infty, \infty)$, the minimal $\pp$-class must be a pair of the form~$(\cc, \cc')$ with~$\vert \cc-\cc' \vert \le 1$, and we have the following counterpart of Proposition~\ref{P:Class}, with a similar proof: 

\begin{prop}\label{P:PClass}
A quadratic normalisation~$(\SS, \nm)$ is of left-$\pp$-class~$\cc$ if, and only if, the map $\nmR{\pp-1}$ satisfies
$\nmR{\pp-1}_{\ai\cc} = \nmR{\pp-1}_{\ai{\cc+1}} = \nmR{\pp-1}_{\aii{\cc+1}}$,
and of $\pp$-class~$(\cc, \cc)$ if, and only if, the map $\nmR{\pp-1}$ satisfies
$\nmR{\pp-1}_{\ai\cc} = \nmR{\pp-1}_{\aii\cc}.$
\end{prop}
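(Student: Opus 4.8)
The plan is to transpose the proof of Proposition~\ref{P:Class} almost verbatim, replacing the length-two map $\nmr = \nmR2$ by the length-$(\pp-1)$ map $\nmR{\pp-1}$, and reading ``position~$1$'' and ``position~$2$'' as the two windows $1\wdots\pp-1$ and $2\wdots\pp$. The single ingredient that genuinely needs reproving in the new setting is the characterisation of normality that is used throughout Proposition~\ref{P:Class}: a length-$\pp$ word~$\ww$ is $\nm$-normal if, and only if, it is fixed by both~$\nmR{\pp-1}_1$ and~$\nmR{\pp-1}_2$.

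First I would establish this characterisation. By the lemma preceding the statement (normality is detected on length-$(\pp-1)$ factors), a length-$\pp$ word~$\ww$ is $\nm$-normal exactly when its two length-$(\pp-1)$ factors, namely its length-$(\pp-1)$ prefix and its length-$(\pp-1)$ suffix, are $\nm$-normal; and~$\ww$ is fixed by~$\nmR{\pp-1}_1$ (\resp by~$\nmR{\pp-1}_2$) precisely when that prefix (\resp that suffix) is $\nm$-normal. I would also record that~$\nmR{\pp-1}$ is idempotent, since it is a restriction of the idempotent map~$\nm$, so that both~$\nmR{\pp-1}_1$ and~$\nmR{\pp-1}_2$ are idempotent; this is the analogue of the hypothesis invoked repeatedly in the proof of Proposition~\ref{P:Class}.

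With these two facts in hand, the $\pp$-analogue of Lemma~\ref{L:Class} goes through unchanged: if $\nm(\ww) = \nmR{\pp-1}_{\ai\cc}(\ww)$, then $\nm(\ww)$, being $\nm$-normal hence fixed by~$\nmR{\pp-1}_1$ and~$\nmR{\pp-1}_2$, is left unchanged by one further application, giving $\nm(\ww) = \nmR{\pp-1}_{\ai{\cc+1}}(\ww)$; and, using~\eqref{E:NormSys3} as in Lemma~\ref{L:Class}~\ITEM2, left-$\pp$-class~$\cc$ yields right-$\pp$-class~$\cc+1$. The two equivalences then follow exactly as in Proposition~\ref{P:Class}. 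For the first, the forward direction reads off the three equal values from the $\pp$-analogue of Lemma~\ref{L:Class}, while the converse runs the same parity argument on~$\cc$: for~$\cc$ odd, $\nmR{\pp-1}_1$ fixes $\nmR{\pp-1}_{\ai\cc}(\ww)$ by idempotence and~$\nmR{\pp-1}_2$ fixes it by the hypothesis $\nmR{\pp-1}_{\ai\cc} = \nmR{\pp-1}_{\ai{\cc+1}}$, so $\nmR{\pp-1}_{\ai\cc}(\ww)$ is $\nm$-normal, and symmetrically for~$\cc$ even. The second equivalence combines the first with its right-handed counterpart by the same merging-of-values argument.

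The main obstacle, such as it is, is confined to the first step: one must check that for $\pp \ge 3$ the length-$(\pp-1)$ prefix and suffix of a length-$\pp$ word jointly contain all of its length-two factors, so that being fixed by the two windows really is equivalent to global $\nm$-normality. This is the only place where the bound $\pp \ge 3$ is used, and the only point at which the argument is not a purely formal substitution into the proof of Proposition~\ref{P:Class}.
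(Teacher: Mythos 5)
Your proposal is correct and matches the paper's intent exactly: the paper gives no separate argument for Proposition~\ref{P:PClass}, stating only that it has ``a similar proof'' to Proposition~\ref{P:Class}, and your transposition --- including the one genuinely new ingredient, namely that for $\pp\ge3$ a length-$\pp$ word is $\nm$-normal if and only if it is fixed by both $\nmR{\pp-1}_1$ and $\nmR{\pp-1}_2$, via the lemma on length-$(\pp-1)$ factors, together with idempotence of $\nmR{\pp-1}$ --- is precisely what makes that transposition legitimate.
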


The following examples show that the behaviour of the $4$-class is independent from that of the $3$-class: the $4$-class may be larger, equal, or smaller.

\begin{exam}
The normalisation~$(\SS_\nn, \nm_\nn)$ of Example~\ref{X:High3} has minimal $3$-class is $(\nn, \nn)$. However, for $\pp \ge 4$, its minimal $\pp$-class is~$(2, 2)$.
\end{exam}

\begin{exam}
Let $\SS_\nn = \{\tta, \ttb_1 \wdots \ttb_\nn\}$ and $\nm_\nn$ be given by the rules $\tta\ttb_\ii {\to} \tta\ttb_{\ii+1}$ for~$\ii$ odd and $\ttb_\ii\tta {\to} \ttb_{\ii+1}\tta$ for~$\ii$ even (as in Example~\ref{X:High3}), completed with $\ttb_{\ii+1}\ttb_\ii \fl \ttb_{\ii+1} \ttb_{\ii+1}$ for~$\ii$ odd and $\ttb_\ii\ttb_{\ii+1} {\to} \ttb_{\ii+1}\ttb_{\ii+1}$ for~$\ii$ even. For $\pp\ge 3$, the minimal $\pp$-class of~$(\SS_{\nn},\nm_{\nn})$ is~$(\nn-1, \nn)$, with the worst case realised for $\tta\ttb_1^{\pp-2}\tta$.
\end{exam}

\begin{exam}\label{X:Large4Class}
Let $\SS_\nn = \{\tta, \ttb_1 \wdots \ttb_\nn, \ttc_1 \wdots \ttc_\nn\}$ and $\nm_\nn$ be given by the rules $\tta\ttb_\ii {\to} \tta\ttb_{\ii+1}$ and $\ttb_{\ii+1}\ttc_\ii {\to} \ttb_{\ii+1} \ttc_{\ii+1}$ for~$\ii$ odd, and $\ttc_\ii\tta {\to} \ttc_{\ii+1}\tta$ and $\ttb_\ii\ttc_{\ii+1} {\to} \ttb_{\ii+1}\ttc_{\ii+1}$ for~$\ii$ even. Here it turns out that the minimal $3$-class is~$(5, 5)$, whereas the minimal $4$-class is~$(\nn-1, \nn)$. For instance, for $\nn = 10$, the worst cases are realised by $\nmr_{\!12121}(\ttb_4\ttc_2\tta) = \ttb_5\ttc_5\tta$, and $\nmR3_{2121212121}(\tta\ttb_1\ttc_1\tta) = \tta\ttb_{10}\ttc_{10}\tta$. One also observes that the minimal $\pp$-class for $\pp \ge 5$ is~$(2,2)$ for every $\nn \ge 5$. 
\end{exam}

\section{Quadratic normalisations of class $(4, 3)$}\label{S:Class3}

Example~\ref{X:Large4Class} shows that having left-class or right-class~$\cc$ does not say much about normalisation of words of length four and higher: an upper bound on the complexity of normalisation for length-three words implies no upper bound on the complexity of normalisation for longer words. We observe below that such a phenomenon is impossible when the class is small, namely when the class is~$(3, 4)$ or~$(4, 3)$. Our proof is based on an argument borrowed from~\cite{Garside}, involving a diagrammatic approach called the domino rule. The results for classes~$(3, 4)$ and $(4, 3)$ are entirely similar; the latter is chosen here in view of the connection with Garside normalisation in Section~\ref{S:Garside}.

The section comprises three parts. First, the domino rule is introduced in Subsection~\ref{SS:Domino}. Next, we establish a general formula for normalisation of long words when some domino rule is valid in Subsection~\ref{SS:Long}. Finally, we show in Subsection~\ref{SS:Axiomatisation} how standard braid arguments can be used to provide a complete axiomatisation of class~$(4, 3)$ normalisation.

\subsection{The domino rule}\label{SS:Domino}

By Proposition~\ref{P:Class}, if a quadratic normalisation $(\SS, \nm)$ has class~$(4, 3)$, the map~$\nmr$ satisfies~\eqref{E:Class1}, which, in the current case, is 
\begin{equation}\label{E:Class3}
\nmr_{\!212} = \nmr_{\!2121} = \nmr_{\!1212}.
\end{equation}
We shall now translate these conditions into a diagrammatic rule.

\begin{defi}\label{D:Domino}
Assume that $\SS$ is a set and $\phi$ is a map from~$\Pow\SS2$ to itself. We say that the \emph{domino rule is valid for~$\phi$} if, if, for all $\ss_1, \ss_2, \ss'_1, \ss'_2, \tt_0, \tt_1, \tt_2$ in~$\SS$ satisfying $\ss'_1 \sep \tt_1 = \phi(\tt_0, \ss_1)$ and $\ss'_2 \sep \tt_2 = \phi(\tt_1 \sep \ss_2)$, the assumption that $\ss_1\vert\ss_2$ is $\phi$-invariant implies that $\ss'_1\vert\ss'_2$ is $\phi$-invariant as well.
\end{defi}

The domino rule of Definition~\ref{D:Domino} becomes more understandable when illustrated in a diagram. To this end, we associate with every element~$\ss$ of the considered set~$\SS$ an $\ss$-labeled arrow, and use concatenation of arrows for the concatenation of elements (note that this amounts to viewing $\SS^*$ as a category). 

\rightskip25mm
Let us indicate that a word~$\ss\sep\tt$ of~$\Pow\SS2$ is $\ft$-invariant---hence~$\nm$-normal when~$\ft$ is the map~$\nmr$ associated with a normalisation~$(\SS, \nm)$---with a small arc, as in \begin{picture}(30,4)
\psarc[style=thin](14.5,0){3}{0}{180}
\pcline{->}(1,0)(14,0)
\taput{$\ss$}
\pcline{->}(16,0)(29,0)
\taput{$\tt$}
\end{picture}.
Then, in the situation when $\ss'\sep\tt' = \ft(\ss\sep\tt)$ holds, we draw a square diagram as on the right.\hfill%
\begin{picture}(0,0)(-6,-5)
\pcline{->}(0,9)(0,1)\tlput{$\ss$}
\pcline{->}(1,0)(14,0)\tbput{$\tt$}
\pcline{->}(1,10)(14,10)\taput{$\ss'$}
\pcline{->}(15,9)(15,1)\trput{$\tt'$}
\psarc[style=thin](15,10){3}{180}{270}
\end{picture}

\rightskip42mm
With such conventions, the domino rule for~$\ft$ corresponds to the diagram on the right: whenever the two squares are commutative and the three pairs of edges connected with small arcs are $\ft$-invariant, then so is the fourth pair indicated with a dotted arc.\hfill%
\begin{picture}(0,0)(-8,-2)
\psarc[style=thin](15,0){3}{180}{360}
\psarc[style=thin](15,10){3.5}{180}{270}
\psarc[style=thin](30,10){3.5}{180}{270}
\psarc[style=thinexist](15,10){3}{0}{180}
\pcline{->}(1,0)(14,0)
\tbput{$\ss_1$}
\pcline{->}(16,0)(29,0)
\tbput{$\ss_2$}
\pcline{->}(1,10)(14,10)
\taput{$\ss'_1$}
\pcline{->}(16,10)(29,10)
\taput{$\ss'_2$}
\pcline{->}(0,9)(0,1)
\tlput{$\tt_0$}
\pcline{->}(15,9)(15,1)
\trput{$\tt_1$}
\pcline{->}(30,9)(30,1)
\trput{$\tt_2$}
\end{picture}

\rightskip0mm

\begin{lemm}\label{L:Domino}
A quadratic normalisation~$(\SS, \nm)$ is of class~$(4, 3)$ if, and only if, the domino rule is valid for~$\nmr$.
\end{lemm}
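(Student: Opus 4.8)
The plan is to prove both implications by translating the condition~\eqref{E:Class3} on~$\nmr$, which by Proposition~\ref{P:Class} characterises class~$(4,3)$, into the diagrammatic statement of the domino rule. The key observation is that both the equation $\nmr_{\!212} = \nmr_{\!2121} = \nmr_{\!1212}$ and the domino rule concern the behaviour of~$\nmr$ on length-three words, and the domino diagram is precisely a bookkeeping device for the intermediate entries produced when one applies~$\nmr$ alternately in positions~$1$-$2$ and~$2$-$3$. So first I would set up the dictionary: given a length-three word to which we apply~$\nmr$ in alternating positions, each application of~$\nmr_{\!1}$ or~$\nmr_{\!2}$ corresponds to one commutative square in the domino diagram, with the horizontal arrows being the successive first/second entries and the vertical arrows the carried-over middle entry. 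The three $\ft$-invariant pairs marked with solid arcs in the diagram encode the hypothesis that certain length-two factors are already $\nm$-normal, and the dotted arc is the conclusion that the remaining factor is normal too.

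For the forward direction, assume $(\SS,\nm)$ is of class~$(4,3)$, so~\eqref{E:Class3} holds. I would take data $\ss_1,\ss_2,\ss'_1,\ss'_2,\tt_0,\tt_1,\tt_2$ as in Definition~\ref{D:Domino}, with $\ss'_1\sep\tt_1 = \nmr(\tt_0\sep\ss_1)$, $\ss'_2\sep\tt_2 = \nmr(\tt_1\sep\ss_2)$, and $\ss_1\sep\ss_2$ already $\nmr$-invariant. I would then run the normalisation of the length-three word $\tt_0\sep\ss_1\sep\ss_2$ starting from the left: applying $\nmr_{\!1}$ gives $\ss'_1\sep\tt_1\sep\ss_2$, then $\nmr_{\!2}$ replaces $\tt_1\sep\ss_2$ by $\ss'_2\sep\tt_2$, producing $\ss'_1\sep\ss'_2\sep\tt_2$. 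Because the left-class is~$4$, three alternating steps starting with $\nmr_{\!1}$ already yield the normal form, and I must extract from this that the factor $\ss'_1\sep\ss'_2$ is $\nm$-normal. The precise argument is that $\nmr_{\!121}(\tt_0\sep\ss_1\sep\ss_2)$ is $\nm$-normal (class~$4$ means $\nmr_{\!1212}=\nmr_{\!121}$ applied to this word, so one more step in position~$1$-$2$ changes nothing, and by~\eqref{E:Quad1} normality of the whole word forces normality of each length-two factor), whence $\ss'_1\sep\ss'_2$ is $\nmr$-invariant.

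For the converse, assume the domino rule is valid for~$\nmr$; I want to deduce~\eqref{E:Class3}, equivalently that $\nmr_{\!121}(\ww)$ is always $\nm$-normal for $\ww\in\Pow\SS3$. Writing $\ww = \tt_0\sep\ss_1\sep\ss_2$ where without loss of generality $\ss_1\sep\ss_2$ is already normalised (one first applies~$\nmr_{\!2}$, and here is where the slight asymmetry between the two parities of the alternating word must be handled carefully), I would apply $\nmr_{\!1}$ then $\nmr_{\!2}$ to realise exactly the two commutative squares of the domino diagram, so that the domino rule delivers that the new first two letters $\ss'_1\sep\ss'_2$ form a normal pair; combined with the already-normal trailing pair this gives that $\nmr_{\!121}(\ww)$ has all length-two factors normal, hence is $\nm$-normal by~\eqref{E:Quad1}, which is~\eqref{E:Class1} for $\cc = 3$ and thus class~$(4,3)$ by Proposition~\ref{P:Class}.

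The main obstacle I anticipate is bookkeeping the parities and the exact placement of the initial $\nmr_{\!2}$ step, together with making rigorous the informal identification between "running the alternating normalisation" and "the two squares of the domino diagram." In particular one must verify that the three pre-normal pairs in the diagram correspond \emph{exactly} to the hypotheses one has available (the trailing pair $\ss_1\sep\ss_2$ being normal, and the two freshly produced pairs $\ss'_1\sep\tt_1$, $\ss'_2\sep\tt_2$ being normal as outputs of~$\nmr$, which they are because $\nmr$ is idempotent by Proposition~\ref{P:QuadPres}), and that the conclusion pair matches the dotted arc. Care is also needed because Definition~\ref{D:Domino} quantifies over arbitrary letters satisfying the square relations, not merely over those arising from a single starting word, so the equivalence must be shown to hold at this more general level rather than just for one trajectory.
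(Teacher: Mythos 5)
Your overall strategy is the paper's: both directions hinge on normalising a length-three word $\tt_0\sep\ss_1\sep\ss_2$ whose last two letters already form an $\nm$-normal pair, so that the alternating normalisation starting in position~$2$-$3$ degenerates to two effective steps whose intermediate entries are exactly the edges of the domino diagram. However, your write-up contains a recurring left/right parity confusion that makes several of the asserted steps false as stated. In the forward direction the operative hypothesis is the right-class-$3$ condition $\nm(\ww)=\nmr_{212}(\ww)$, not left-class~$4$: since $\ss_1\sep\ss_2$ is normal, $\nmr_{2}$ fixes $\ww=\tt_0\sep\ss_1\sep\ss_2$, whence $\nm(\ww)=\nmr_{212}(\ww)=\nmr_{12}(\ww)=\ss'_1\sep\ss'_2\sep\tt_2$, and \eqref{E:Quad1} gives normality of $\ss'_1\sep\ss'_2$. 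Your justification ``class~$4$ means $\nmr_{1212}=\nmr_{121}$ applied to this word'' is not a consequence of left-class~$4$, which only says $\nm=\nmr_{1212}$ on $\Pow\SS3$ and gives no control over $\nmr_{12}(\ww)$ or $\nmr_{121}(\ww)$ separately; the equality $\nmr_{121}(\ww)=\nmr_{1212}(\ww)$ for this particular~$\ww$ does hold, but only via \eqref{E:Class3} combined with $\nmr_{2}(\ww)=\ww$, that is, via the right-class-$3$ condition you are bypassing. Similarly, ``three alternating steps starting with $\nmr_{1}$ already yield the normal form'' is the left-class-$3$ condition, which a class-$(4,3)$ normalisation need not satisfy.

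In the converse, the sequence you actually run on an arbitrary $\ww\in\Pow\SS3$ is $\nmr_{2}$, then $\nmr_{1}$, then $\nmr_{2}$, i.e.\ $\nmr_{212}$, not $\nmr_{121}$; what the domino rule then delivers is that $\nmr_{212}(\ww)$ is $\nm$-normal, i.e.\ right-class~$3$, which yields class~$(4,3)$ by Lemma~\ref{L:Class}. Labelling the conclusion as ``\eqref{E:Class1} for $\cc=3$'' asserts left-class~$3$, which is strictly stronger than what is proved and is false for some class-$(4,3)$ normalisations (the Artin--Tits example in Section~\ref{S:Garside} is of minimal class~$(4,3)$). These are fixable bookkeeping errors rather than a wrong approach, but as written the key equalities are attributed to hypotheses that do not imply them. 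Finally, your closing worry that Definition~\ref{D:Domino} quantifies over arbitrary letters rather than over a single trajectory is unfounded: any data satisfying the two square relations is exactly the trajectory of the single word $\tt_0\sep\ss_1\sep\ss_2$, since the $\ss'_\ii$ and $\tt_\ii$ are determined by~$\nmr$.
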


\begin{proof}
Assume that~$(\SS, \nm)$ is of right-class~$3$, and let $\ss_1 \wdots \tt_2$ be elements of~$\SS$ satisfying the assumptions of the domino rule. By definition of the right-class, we have $\nm(\tt_0 \sep \ss_1 \sep \ss_2) = \nmr_{\!212}(\tt_0 \sep \ss_1 \sep \ss_2)$. As, by assumption, $\ss_1\sep\ss_2$ is $\nm$-normal, we obtain
$\nm(\tt_0 \sep \ss_1 \sep \ss_2) = \nmr_{\!12}(\tt_0 \sep \ss_1 \sep \ss_2) = \nmr_{\!2}(\ss'_1\sep\tt_1\sep\ss_2) = \ss'_1\sep\ss'_2\sep\tt_2$. So $\ss'_1\sep\ss'_2\sep\tt_2$ is $\nm$-normal, hence so is $\ss'_1\sep\ss'_2$, and the domino rule is valid for~$\nmr$. 

Conversely, assume that the domino rule is valid for~$\nmr$. Let $\tt_0 \sep \rr_1 \sep \rr_2$ be an arbitrary word in~$\Pow\SS3$. Put $\ss_1 \sep \ss_2 = \nmr(\rr_1 \sep \rr_2)$, $\ss'_1 \sep \tt_1 = \nmr(\tt_0 \sep \ss_1)$, and $\ss'_2 \sep \tt_2 = \nmr(\tt_1 \sep \ss_2)$. Then $\ss'_2 \sep \tt_2$ is $\nm$-normal by construction, and $\ss'_1 \sep \ss'_2$ is $\nm$-normal by the domino rule, so $\ss'_1 \sep \ss'_2 \sep \tt_2$ is $\nm$-normal. Hence we have $\nm(\ww) = \nmr_{\!212}(\ww)$ for every~$\ww$ in~$\Pow\SS3$, and $(\SS, \nm)$ is of right-class~$3$. 
\end{proof}

\subsection{Normalising long words}\label{SS:Long}

We shall now show that, is~$(\SS, \nm)$ is a normalisation map of class~$(4, 3)$, there exists a simple formula for the normalisation of arbitrarily long words.

\begin{nota}\label{N:Iteration}
Starting from $\delta_1 = \varepsilon$ (the empty sequence) and using $\sh$ for the shift mapping that increases every entry by~$1$, we inductively define a finite sequence of positive integers~$\delta_\pp$ by
\begin{equation}\label{E:delta}
\delta_\pp = \sh(\delta_{\pp-1}) \sep 1 \sep 2 \sep \pdots \sep \pp-2 \sep \pp-1 \text{\qquad for $\pp \ge 1$.}
\end{equation}
\end{nota}

Thus we find, omitting the separation symbol,
$$\delta_2 = 1, \quad \delta_3 = 212, \quad \delta_4 = 323123, \quad \delta_5 = 4342341234, \quad \mbox{etc.}$$

\begin{prop}\label{P:NormClass3}
Assume that~$(\SS, \nm)$ is a quadratic normalisation of class~$(4, 3)$. Then, for every $\pp \ge 1$ and every length-$\pp$ word~$\ww$, we have 
\begin{equation}
\label{E:NormClass3}
\nm(\ww) = \nmr_{\!\delta_\pp}(\ww).
\end{equation} 
\end{prop}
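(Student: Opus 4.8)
The plan is to prove \eqref{E:NormClass3} by induction on the word length $\pp$, using the recursive definition \eqref{E:delta} of the sequence $\delta_\pp$ and, as the engine of the induction, the domino rule, which by Lemma~\ref{L:Domino} is available precisely because $(\SS,\nm)$ has class~$(4,3)$. For $\pp \le 2$ the claim is immediate: $\delta_1 = \varepsilon$ and $\nmr_{\!\delta_2} = \nmr_{\!1} = \nmr$ normalise words of length at most two by definition. The substance is the inductive step, where I want to read \eqref{E:delta} as saying that $\nmr_{\!\delta_\pp}$ first applies $\nmr_{\!\sh(\delta_{\pp-1})}$, which by the shift is exactly $\nmr_{\!\delta_{\pp-1}}$ acting on positions $2\wdots\pp$ (that is, on the length-$(\pp-1)$ suffix), and then applies the final block $\nmr_{\!1 2\, \cdots\, \pp-1}$, a single left-to-right sweep.

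Concretely, write $\ww = \ss_0 \sep \vv$ with $\vv \in \Pow\SS{\pp-1}$. Applying the shifted first factor gives, by the induction hypothesis applied to $\vv$, the word $\ss_0 \sep \nm(\vv)$; so it suffices to show that the left-to-right sweep $\nmr_{\!1 2\, \cdots\, \pp-1}$ sends $\ss_0 \sep \nm(\vv)$ to its normal form. Here is where the domino rule enters. Since $\nm(\vv)$ is $\nm$-normal, all of its length-two factors are $\nm$-normal by \eqref{E:Quad1}. The sweep pushes $\ss_0$ rightward one position at a time: at each step it normalises a length-two factor $\tt_{i-1} \sep \ss_i$ (producing $\ss'_i \sep \tt_i$), leaving behind the partially rebuilt top row. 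I want to argue, by an inner induction on the sweep position, that after the $i$-th elementary step the prefix $\ss'_1 \sep \cdots \sep \ss'_i$ is $\nm$-normal, i.e. each consecutive pair $\ss'_{i-1} \sep \ss'_i$ is $\nmr$-invariant. This is exactly the conclusion the domino rule delivers: its hypotheses are that the two squares commute (which holds by construction of the $\ss'$'s and $\tt$'s as outputs of $\nmr$), that $\ss_{i-1} \sep \ss_i$ is $\nmr$-invariant (true because these are adjacent letters of the already-normal word $\nm(\vv)$), and that the incoming left pair is $\nmr$-invariant (the inductive hypothesis of the inner induction); the conclusion is that $\ss'_{i-1} \sep \ss'_i$ is $\nmr$-invariant.

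After the full sweep, every length-two factor of the resulting word is $\nm$-normal: the factors within the top row $\ss'_1 \sep \cdots \sep \ss'_{\pp-1}$ are $\nm$-normal by the inner induction, and the last factor $\ss'_{\pp-1} \sep \tt_{\pp-1}$ is $\nm$-normal because it is the direct output of $\nmr$ in the final elementary step. By \eqref{E:Quad1} the whole word is $\nm$-normal, and since it is obtained from $\ww$ by applying shifted copies of $\nmr$, condition~\eqref{E:NormSys3} (idempotence of $\nm$ together with the fact that applying $\nmr$ never changes the evaluation in $\MM$) forces it to equal $\nm(\ww)$. This completes the induction and establishes \eqref{E:NormClass3}.

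The main obstacle I anticipate is bookkeeping the two nested inductions cleanly, in particular making the inner sweep argument precise: one must track the triangular array of letters $\ss_i, \ss'_i, \tt_i$ produced as $\ss_0$ is transported across $\nm(\vv)$, verify that each small square is genuinely commutative (i.e. that the outputs satisfy $\ss'_i \sep \tt_i = \nmr(\tt_{i-1}\sep\ss_i)$ as required by Definition~\ref{D:Domino}), and confirm that the domino rule's three invariance hypotheses hold at every position. A diagram of the transport, stacking the squares horizontally as in the illustration following Definition~\ref{D:Domino}, will make the invariance propagation transparent and is likely the cleanest way to present the step; the algebraic verification that the sweep composes to the correct restriction of $\nmr_{\!\delta_\pp}$ is then routine given \eqref{E:delta}.
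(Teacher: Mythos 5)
Your proof is correct and follows essentially the same route as the paper: induction on $\pp$ via the decomposition $\delta_\pp = \sh(\delta_{\pp-1})\sp 1\sp\pdots\sp\pp-1$, with the final left-to-right sweep handled by propagating the domino rule across the row of squares (the paper isolates this sweep step as Lemma~\ref{L:LeftMult}, which you have simply inlined). One small remark: your ``inner induction'' is not actually needed, since Definition~\ref{D:Domino} requires only that the two squares commute and that the bottom pair $\ss_{i-1}\sep\ss_i$ be $\nmr$-invariant --- each adjacent pair of squares is treated independently, so the normality of $\ss'_{i-1}\sep\ss'_i$ follows directly without assuming anything about earlier pairs in the sweep.
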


So there is a universal recipe, prescribed by the sequence of positions~$\delta_\pp$, for normalising every word of length~$\pp$. We begin with a preparatory result.

\begin{lemm}\label{L:LeftMult}
If $(\SS, \nm)$ is a quadratic normalisation and the domino rule is valid for~$\nmr$, then, for every~$\tt$ in~$\SS$ and every $\nm$-normal $\SS$-word $\ss_1 \sep \pdots \sep \ss_\qq$, we have
\begin{equation}\label{E:MultOne}
\nm(\tt \sep \ss_1 \sep \pdots \sep \ss_\qq) = \nmr_{\!12 \pdots (\qq-1)\qq}(\tt \sep \ss_1 \sep \pdots \sep \ss_\qq).
\end{equation}
\end{lemm}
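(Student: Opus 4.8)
The statement says that left-multiplying an $\nm$-normal word by a single letter can be normalised by one left-to-right sweep. The plan is to reduce \eqref{E:MultOne} to the single assertion that the word $\nmr_{\!12\pdots(\qq-1)\qq}(\tt \sep \ss_1 \sep \pdots \sep \ss_\qq)$ is $\nm$-normal. This reduction is cheap: each elementary map $\nmr_{\!\ii}$ replaces the length-two factor $\ww$ in position~$\ii$ by $\nmr(\ww) = \nm(\ww)$, so by \eqref{E:NormSys3} it does not change the value of~$\nm$; iterating, $\nm$ of the swept word equals $\nm(\tt \sep \ss_1 \sep \pdots \sep \ss_\qq)$. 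Once the swept word is known to be $\nm$-normal it is fixed by~$\nm$, and comparing the two expressions for its $\nm$-value gives \eqref{E:MultOne}.

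The normality of the swept word I would prove by induction on~$\qq$. For $\qq = 1$ it is clear, since $\nmr_{\!1}(\tt \sep \ss_1) = \nmr(\tt \sep \ss_1)$ is $\nm$-normal. For $\qq \ge 2$, set $\ss'_1 \sep \tt_1 = \nmr(\tt \sep \ss_1)$; since the positions $2 \wdots \qq$ act only on the length-$\qq$ suffix and leave the first letter untouched, one has
\[
\nmr_{\!12\pdots(\qq-1)\qq}(\tt \sep \ss_1 \sep \pdots \sep \ss_\qq) = \ss'_1 \sep \WW, \quad \WW = \nmr_{\!12\pdots(\qq-1)}(\tt_1 \sep \ss_2 \sep \pdots \sep \ss_\qq).
\]
The factor $\ss_2 \sep \pdots \sep \ss_\qq$ is $\nm$-normal by~\eqref{E:Quad1}, so the induction hypothesis makes~$\WW$ $\nm$-normal. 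By the locality axiom~\eqref{E:Quad1}, $\ss'_1 \sep \WW$ is then $\nm$-normal as soon as $\ss'_1$ followed by the first letter of~$\WW$ is.

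This remaining point is exactly the domino rule. The first letter of~$\WW$ is the first entry~$\ss'_2$ of $\nmr(\tt_1 \sep \ss_2) = \ss'_2 \sep \tt_2$, because the subsequent maps $\nmr_{\!2}, \nmr_{\!3}, \pdots$ do not affect position one. We therefore have $\ss'_1 \sep \tt_1 = \nmr(\tt \sep \ss_1)$ and $\ss'_2 \sep \tt_2 = \nmr(\tt_1 \sep \ss_2)$, while $\ss_1 \sep \ss_2$ is $\nm$-normal as a factor of $\ss_1 \sep \pdots \sep \ss_\qq$; the domino rule (Definition~\ref{D:Domino}) then gives that $\ss'_1 \sep \ss'_2$ is $\nm$-normal, completing the induction. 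I expect the only delicate point to be the bookkeeping: correctly identifying the first letter of the normalised suffix as the $\ss'_2$ produced at the very first step, and matching the septuple $(\ss_1, \ss_2, \ss'_1, \ss'_2, \tt, \tt_1, \tt_2)$ to the hypotheses of the domino rule. The rest is routine use of the idempotence of~$\nm$ and of the locality axiom~\eqref{E:Quad1}.
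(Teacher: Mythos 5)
Your proof is correct and follows essentially the same route as the paper: both reduce \eqref{E:MultOne} to showing that the swept word is $\nm$-normal and then verify each new length-two junction via one application of the domino rule, the only difference being that you organise the $\qq-1$ domino applications as an induction on~$\qq$ while the paper performs them in a single pass over the explicitly constructed sequence $\ss'_1\sep\pdots\sep\ss'_\qq\sep\tt_\qq$. Your bookkeeping (identifying the first letter of the normalised suffix as the $\ss'_2$ from $\nmr(\tt_1\sep\ss_2)$) is accurate, so there is no gap.
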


\begin{proof}
For $\qq = 1$, \eqref{E:MultOne} reduces to $\nm(\tt \sep \ss_1) = \nmr(\tt \sep \ss_1)$. Assume $\qq \ge 2$. Put $\tt_0 = \tt$ and inductively define~$\ss'_\ii$ and~$\tt_\ii$ by $\ss'_\ii \sep \tt_\ii = \nmr(\tt_{\ii-1} \sep \ss_\ii)$ for $\ii = 1 \wdots \qq$ (see Figure~\ref{F:LeftMult}). Then, by definition, we have 
\[
\ss'_1 \sep \pdots \sep \ss'_\qq \sep \tt_\qq = \nmr_{\!12 \pdots (\qq-1)\qq}(\tt \sep \ss_1 \sep \pdots \sep \ss_\qq),
\]
so, in order to establish~\eqref{E:MultOne}, it suffices to show that the word $\ss'_1 \sep \pdots \sep \ss'_\qq \sep \tt_\qq$ is $\nm$-normal. Now, for $\ii = 1 \wdots \qq-1$, the assumption that $\ss_\ii \sep \ss_{\ii+1}$ is $\nm$-normal and the validity of the domino rule imply that $\ss'_\ii \sep \ss'_{\ii+1}$ is $\nm$-normal as well. Finally, $\ss'_\qq \sep \tt_\qq$ is $\nm$-normal by construction. Hence, every length-two factor of $\ss'_1 \sep \pdots \sep \ss'_\qq \sep \tt_\qq$ is $\nm$-normal and, therefore, the latter is $\nm$-normal.
\end{proof}

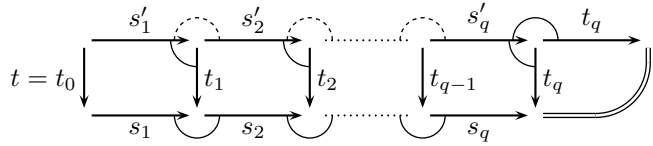
\begin{figure}[htb]
\begin{picture}(75,13)(0,-2)
\pcline{->}(1,0)(14,0)\tbput{$\ss_1$}
\pcline{->}(16,0)(29,0)\tbput{$\ss_2$}
\pcline[style=etc](32,0)(43,0)
\pcline{->}(46,0)(59,0)\tbput{$\ss_\qq$}
\pcline{->}(1,10)(14,10)\taput{$\ss'_1$}
\pcline{->}(16,10)(29,10)\taput{$\ss'_2$}
\pcline[style=etc](32,10)(43,10)
\pcline{->}(46,10)(59,10)\taput{$\ss'_\qq$}
\pcline{->}(61,10)(74,10)\taput{$\tt_\qq$}
\pcline{->}(0,9)(0,1)\tlput{$\tt = \tt_0$}
\pcline{->}(15,9)(15,1)\trput{$\tt_1$}
\pcline{->}(30,9)(30,1)\trput{$\tt_2$}
\pcline{->}(45,9)(45,1)\trput{$\tt_{\qq-1}$}
\pcline{->}(60,9)(60,1)\trput{$\tt_\qq$}
\pcline[style=double](61,0)(68,0)
\pcline[style=double](75,9)(75,7)
\psarc[style=double](68,7){7}{270}{360}
\psarc[style=thin](15,10){3.5}{180}{270}
\psarc[style=thin](30,10){3.5}{180}{270}
\psarc[style=thin](60,10){3.5}{180}{270}
\psarc[style=thin](15,0){3}{180}{360}
\psarc[style=thin](30,0){3}{180}{360}
\psarc[style=thin](45,0){3}{180}{360}
\psarc[style=thinexist](15,10){3}{0}{180}
\psarc[style=thinexist](30,10){3}{0}{180}
\psarc[style=thinexist](45,10){3}{0}{180}
\psarc[style=thin](60,10){3}{0}{180}
\end{picture}
\caption[]{\sf\smaller Left-multiplying a normal word by an element of~$\SS$: the domino rule guarantees that the upper row is normal whenever the lower row is. The diagram shows, in particular, that the monoid presented by~$(\SS, \nm)$ satisfies the $2$-Fellow Traveler Property with respect to left-multiplication~\cite{HoTh}.}
\label{F:LeftMult}
\end{figure}

\begin{proof}[Proof of Proposition~\ref{P:NormClass3}]
By Lemma~\ref{L:Domino}, the domino rule is valid for~$\nm$, hence Lemma~\ref{L:LeftMult} applies. We prove \eqref{E:NormClass3} using induction on~$\pp$. For $\pp \le 2$ , the result is immediate. Assume $\pp \ge 3$ and let $\ww = \ss_1 \vdots \ss_\pp$ belong to~$\Pow\SS\pp$. Put 
\begin{equation}\label{E:NormClass31}
\ss'_2 \vdots \ss'_\pp := \nmr_{\!\delta_{\pp-1}}(\ss_2 \vdots \ss_\pp)
\end{equation}
(see Figure~\ref{F:NormClass3}). By induction hypothesis, the word $\ss'_2 \vdots \ss'_\pp$ is $\nm$-normal. On the other hand, by definition of position shifting, \eqref{E:NormClass31} implies
\begin{equation}\label{E:NormClass32}
\ss_1 \sep \ss'_2 \vdots \ss'_\pp = \nmr_{\!\sh(\delta_{\pp-1})}(\ss_1 \sep \ss_2 \vdots \ss_\pp).\end{equation}
Then, as $\ss'_2 \vdots \ss'_{\pp-1}$ is $\nm$-normal and $\ss_1$ belongs to~$\SS$, Lemma~\ref{L:LeftMult} implies
\begin{equation*}\label{E:NormClass33}
\nm(\ss_1 \sep \ss'_2 \vdots \ss'_\pp) = \nmr_{\!12 \pdots (\pp-1)}(\ss_1 \sep \ss'_2 \vdots \ss'_\pp),
\end{equation*}
whence, owing to the inductive definition of~$\delta_\pp$,
\begin{equation}\label{E:NormClass34}
\nm(\ss_1 \sep \ss'_2 \vdots \ss'_\pp) = \nmr_{\!12 \pdots (\pp-1)} \comp \nmr_{\!\sh(\delta_{\pp-1})}(\ss_1 \sep \ss_2 \vdots \ss_\pp) = \nmr_{\!\delta_\pp}(\ss_1 \sep \ss_2 \vdots \ss_\pp).
\end{equation}
Owing to~\eqref{E:NormSys3}, \eqref{E:NormClass32} implies that $\nm(\ss_1 \sep \ss_2 \vdots \ss_\pp)$ and $\nm(\ss_1 \sep \ss'_2 \vdots \ss'_\pp)$ are equal. Merging with~\eqref{E:NormClass34}, we deduce $\nm(\ss_1 \sep \ss_2 \vdots \ss_\pp)=\nmr_{\!\delta_\pp}(\ss_1 \sep \ss_2 \vdots \ss_\pp)$.
\end{proof}

The inductive normalisation process described in Proposition~\ref{P:NormClass3} and Figure~\ref{F:NormClass3} amounts to using the map~$\nmr$ to construct a triangular grid as shown in Figure~\ref{F:Grid}.
\begin{figure}[htb]
\begin{picture}(95,24)(0,-1)
\pcline{->}(1,0)(14,0)\tbput{$\ss_1$}
\pcline{->}(16,0)(29,0)\tbput{$\ss_2$}
\pcline[style=etc](32,0)(43,0)
\pcline{->}(46,0)(59,0)\tbput{$\ss_\pp$}
\pcline[style=double](0,9)(0,1)
\pcline[style=double](15,9)(15,1)
\pcline[style=double](60,9)(60,1)
\pcline{->}(1,10)(14,10)\tbput{$\ss_1$}
\pcline{->}(16,10)(29,10)\tbput{$\ss'_2$}
\pcline[style=etc](32,10)(43,10)
\pcline{->}(46,10)(59,10)\tbput{$\ss'_\pp$}
\psarc[style=thin](30,10){3}{180}{360}
\psarc[style=thin](45,10){3}{180}{360}
\pcline[style=double](0,13)(0,11)
\pcline[style=double](7,20)(14,20)
\psarc[style=double](7,13){7}{90}{180}
\pcline{->}(15,19)(15,11)\tlput{$\ss_1$}
\pcline{->}(30,19)(30,11)
\pcline{->}(45,19)(45,11)
\pcline{->}(60,19)(60,11)
\pcline[style=double](75,19)(75,17)
\pcline[style=double](61,10)(68,10)
\psarc[style=double](68,17){7}{270}{360}
\pcline{->}(16,20)(29,20)\taput{$\ss''_1$}
\pcline[style=etc](32,20)(43,20)
\pcline{->}(46,20)(59,20)\taput{$\ss''_{\pp-1}$}
\pcline{->}(61,20)(74,20)\taput{$\ss''_\pp$}
\psarc[style=thin](30,20){3.5}{180}{270}
\psarc[style=thin](60,20){3.5}{180}{270}
\psarc[style=thin](30,20){3}{0}{180}
\psarc[style=thin](45,20){3}{0}{180}
\psarc[style=thin](60,20){3}{0}{180}
\psline[style=thin](77,0)(79,0)(79,9.5)(77,9.5)
\put(81,6.5){induction}
\put(80,2.5){hypothesis}
\psline[style=thin](77,10.5)(79,10.5)(79,20)(77,20)
\put(80,14.5){Lemma~\ref{L:LeftMult}}
\end{picture}
\caption[]{\sf\smaller Inductive normalisation process of a length-$\pp$ word $\ss_1 \sep \pdots \sep \ss_\pp$ based on the domino rule: first normalise $\ss_2\vdots \ss_\pp$ into $\ss'_2 \vdots \ss'_\pp$, and then normalise $\ss_1 \vert \ss'_2 \vdots \ss'_\pp$ into $\ss''_1 \vdots \ss''_\pp$, which is $\nm$-normal by Lemma~\ref{L:LeftMult}.}
\label{F:NormClass3}
\end{figure}
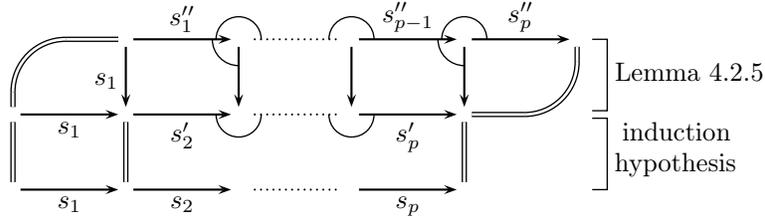

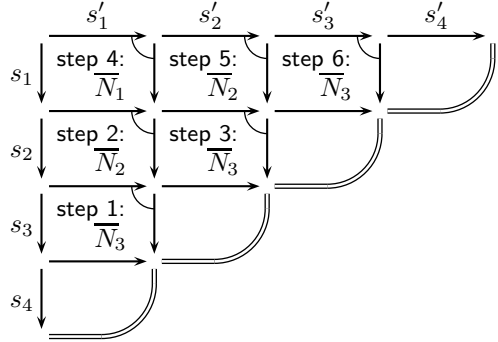
\begin{figure}[htb]
\begin{picture}(60,40)(0,2)
\pcline{->}(1,40)(14,40)\taput{$\ss'_1$}
\pcline{->}(16,40)(29,40)\taput{$\ss'_2$}
\pcline{->}(31,40)(44,40)\taput{$\ss'_3$}
\pcline{->}(46,40)(59,40)\taput{$\ss'_4$}
\pcline{->}(1,30)(14,30)
\pcline{->}(16,30)(29,30)
\pcline{->}(31,30)(44,30)
\pcline{->}(1,20)(14,20)
\pcline{->}(16,20)(29,20)
\pcline{->}(1,10)(14,10)
\psline[style=double](1,0)(8,0)
\psarc[style=double](8,7){7}{270}{360}
\psline[style=double](15,7)(15,9)
\psline[style=double](16,10)(23,10)
\psarc[style=double](23,17){7}{270}{360}
\psline[style=double](30,17)(30,19)
\psline[style=double](31,20)(38,20)
\psarc[style=double](38,27){7}{270}{360}
\psline[style=double](45,27)(45,29)
\psline[style=double](46,30)(53,30)
\psarc[style=double](53,37){7}{270}{360}
\psline[style=double](60,37)(60,39)
\pcline{->}(0,39)(0,31)\tlput{$\ss_1$}
\pcline{->}(0,29)(0,21)\tlput{$\ss_2$}
\pcline{->}(0,19)(0,11)\tlput{$\ss_3$}
\pcline{->}(0,9)(0,1)\tlput{$\ss_4$}
\pcline{->}(15,39)(15,31)
\pcline{->}(15,29)(15,21)
\pcline{->}(15,19)(15,11)
\pcline{->}(30,39)(30,31)
\pcline{->}(30,29)(30,21)
\pcline{->}(45,39)(45,31)
\psarc[style=thin](15,20){3}{180}{270}
\psarc[style=thin](15,30){3}{180}{270}
\psarc[style=thin](30,30){3}{180}{270}
\psarc[style=thin](15,40){3}{180}{270}
\psarc[style=thin](30,40){3}{180}{270}
\psarc[style=thin](45,40){3}{180}{270}
\put(2,16){\sf\small step 1:}\put(7,12){$\nmr_{\!3}$}
\put(2,26){\sf\small step 2:}\put(7,22){$\nmr_{\!2}$}
\put(17,26){\sf\small step 3:}\put(22,22){$\nmr_{\!3}$}
\put(2,36){\sf\small step 4:}\put(7,32){$\nmr_{\!1}$}
\put(17,36){\sf\small step 5:}\put(22,32){$\nmr_{\!2}$}
\put(32,36){\sf\small step 6:}\put(37,32){$\nmr_{\!3}$}
\end{picture}
\caption[]{\sf\smaller Normalising a length-$\pp$ word in $\pp(\pp-1)/2$~steps, here with $\pp = 4$:  according to~\eqref{E:NormClass3}, the six steps correspond to applying $\nmr_{\!\delta_4}$, that is, $\nmr_{\!323123}$.}
\label{F:Grid}
\end{figure}

An important consequence of Proposition~\ref{P:NormClass3} is that, contrary to the situation of Example~\ref{X:Large4Class}, it is impossible to have a large $4$-class in class~$(4, 3)$.

\begin{coro}\label{C:PClass}
If $(\SS, \nm)$ is a quadratic normalisation of class~$(4, 3)$, then $(\SS, \nm)$ is of $\pp$-class~$(4, 3)$ for every~$\pp \ge 3$.
\end{coro}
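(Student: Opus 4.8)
\emph{Plan.} Because $(\SS,\nm)$ is of class~$(4,3)$, the domino rule is valid for~$\nmr$ by Lemma~\ref{L:Domino}, so that Lemma~\ref{L:LeftMult} and Proposition~\ref{P:NormClass3} are available. I would first reduce to proving right-$\pp$-class~$3$ alone: since the properties of the $3$-class extend to the $\pp$-class (as observed before Proposition~\ref{P:PClass}), the symmetric form of Lemma~\ref{L:Class}\,\ITEM2 gives that being of right-$\pp$-class~$3$ forces left-$\pp$-class~$4$, hence $\pp$-class~$(4,3)$. So it is enough to show $\nm(\ww) = \nmR{\pp-1}_{212}(\ww)$ for every $\ww = \ss_1 \sep \pdots \sep \ss_\pp$ in~$\Pow\SS\pp$, with $\pp \ge 3$. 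The backbone is the left-multiplication normal form: putting $\aa_2 \sep \pdots \sep \aa_\pp = \nm(\ss_2 \sep \pdots \sep \ss_\pp)$, we have $\nm(\ww) = \nm(\ss_1 \sep \aa_2 \sep \pdots \sep \aa_\pp)$ by~\eqref{E:NormSys3}, and since $\aa_2 \sep \pdots \sep \aa_\pp$ is $\nm$-normal, Lemma~\ref{L:LeftMult} computes this as a staircase: setting $\tt_0 = \ss_1$ and $\ss'_\ii \sep \tt_\ii = \nmr(\tt_{\ii-1} \sep \aa_{\ii+1})$ for $\ii = 1 \wdots \pp-1$, one gets the $\nm$-normal word $\nm(\ww) = \ss'_1 \sep \pdots \sep \ss'_{\pp-1} \sep \tt_{\pp-1}$, which I keep as the target.

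Next I would track the three operations composing $\nmR{\pp-1}_{212}$. The first, $\nmR{\pp-1}_2$, turns~$\ww$ into $\ss_1 \sep \aa_2 \sep \pdots \sep \aa_\pp$. The second, $\nmR{\pp-1}_1$, normalises $\ss_1 \sep \aa_2 \sep \pdots \sep \aa_{\pp-1}$; as $\aa_2 \sep \pdots \sep \aa_{\pp-1}$ is $\nm$-normal, this is again a left-multiplication by~$\ss_1$, and its staircase obeys the \emph{same} recursion with the \emph{same} inputs $\aa_2 \wdots \aa_{\pp-1}$, so it reproduces $\ss'_1 \wdots \ss'_{\pp-2}$ and $\tt_{\pp-2}$ unchanged. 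The current word is thus $\ss'_1 \sep \pdots \sep \ss'_{\pp-2} \sep \tt_{\pp-2} \sep \aa_\pp$. The third operation, $\nmR{\pp-1}_2$, then normalises its length-$(\pp-1)$ suffix $\ss'_2 \sep \pdots \sep \ss'_{\pp-2} \sep \tt_{\pp-2} \sep \aa_\pp$.

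The crux is to identify what this last normalisation produces. The word $\ss'_2 \sep \pdots \sep \ss'_{\pp-2} \sep \tt_{\pp-2} \sep \aa_\pp$ differs from $\ss'_2 \sep \pdots \sep \ss'_{\pp-2} \sep \ss'_{\pp-1} \sep \tt_{\pp-1}$ only in its last two letters, and these two pairs are $\nm$-equivalent because $\ss'_{\pp-1} \sep \tt_{\pp-1} = \nmr(\tt_{\pp-2} \sep \aa_\pp)$ by the case $\ii = \pp-1$ of the recursion; hence the two suffixes have the same image under~$\nm$. Moreover $\ss'_2 \sep \pdots \sep \ss'_{\pp-1} \sep \tt_{\pp-1}$ is a factor of the $\nm$-normal word $\nm(\ww)$, so it is itself $\nm$-normal and equals its own image. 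Therefore the third operation sends the current word to $\ss'_1 \sep \ss'_2 \sep \pdots \sep \ss'_{\pp-1} \sep \tt_{\pp-1} = \nm(\ww)$, giving $\nmR{\pp-1}_{212}(\ww) = \nm(\ww)$ (for $\pp = 3$ the middle block is empty, but the argument is the same).

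I expect the one genuinely delicate point to be the observation that the final $\nmR{\pp-1}_2$ does not unleash an uncontrolled right-to-left cascade: a priori, normalising a suffix could alter its leading letter and break the junction with~$\ss'_1$, and no right-hand domino rule is available in class~$(4,3)$. What rescues the argument is that the head $\ss'_2 \sep \pdots \sep \tt_{\pp-2}$ already sits as a factor of the target normal form, so only the last two letters need reshuffling into $\ss'_{\pp-1} \sep \tt_{\pp-1}$; the \emph{staircase agreement} between the partial left-multiplication of the second step and the full one defining $\nm(\ww)$ is exactly what confines the effect of~$\aa_\pp$ to the rightmost pair. Once right-$\pp$-class~$3$ is established, left-$\pp$-class~$4$ and hence the corollary follow formally, with no further computation on words of length four or more.
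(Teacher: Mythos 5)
Your proof is correct and follows essentially the same route as the paper's: reduce to establishing right-$\pp$-class~$3$, use Lemma~\ref{L:LeftMult} to express $\nm(\ww)$ as the left-multiplication staircase over the normalised suffix, observe that the middle step $\nmR{\pp-1}_1$ reproduces the truncated staircase, and conclude that the final $\nmR{\pp-1}_2$ only has to perform the last staircase step because its target $\ss'_2\sep\pdots\sep\ss'_{\pp-1}\sep\tt_{\pp-1}$ is already a factor of the $\nm$-normal word $\nm(\ww)$. The point you flag as delicate (confining the effect of the last letter to the rightmost pair) is resolved exactly as in the paper, via~\eqref{E:NormSys3} together with~\eqref{E:Quad1}.
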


\begin{proof}
Assume $\pp \ge 3$, and let $\ww = \ss_1 \sep \pdots \sep \ss_\pp$ lie in~$\Pow\SS\pp$. We shall show that $\nm(\ww)$ is equal to $\nmR{\pp-1}_{212}(\ww)$ by checking that successively applying $\nmR{\pp-1}_2$, $\nmR{\pp-1}_1$, and $\nmR{\pp-1}_2$ to~$\ww$ leads to an $\nm$-normal word. First, let $\ss'_2 \sep \pdots \sep \ss'_\pp = \nm(\ss_2 \sep \pdots \sep \ss_\pp)$. We have 
\begin{equation}\label{E:PClass1}
\nmR{\pp-1}_2(\ww) = \ss_1 \sep \ss'_2 \sep \pdots \sep \ss'_\pp.
\end{equation}
As $\ss'_2 \sep \pdots \sep \ss'_\pp$ is $\nm$-normal, Lemma~\ref{L:LeftMult} implies $\nm(\ww) = \nmr_{\!12\pdots(\pp-1)}(\ss_1 \sep \ss'_2 \sep \pdots \sep \ss'_\pp)$. Now, put $\tt_0 = \ss_1$ and, inductively, $\ss''_\ii \sep \tt_\ii = \nmr(\tt_{\ii-1} \sep \ss'_{\ii+1})$ for $\ii = 1 \wdots \pp-1$. As $\ss'_2 \sep \pdots \sep \ss'_{\pp-1}$ is $\nm$-normal, Lemma~\ref{L:LeftMult} implies that $\ss''_1 \sep \pdots \sep \ss''_{\pp-2} \sep \tt_{\pp-1}$ is $\nm$-normal, so we have $\nm(\ss_1 \sep \ss'_2 \sep \pdots \sep \ss'_{\pp-1}) = \ss''_1 \sep \pdots \sep \ss''_{\pp-2} \sep \tt_{\pp-1}$, whence
\begin{equation}\label{E:PClass2}
\nmR{\pp-1}_1(\ss_1 \sep \ss'_2 \sep \pdots \sep \ss'_\pp) = \ss''_1 \sep \pdots \sep \ss''_{\pp-2} \sep \tt_{\pp-1} \sep \ss'_\pp.
\end{equation}
By the same argument, $\ss''_1 \sep \pdots \sep \ss''_{\pp-1} \sep \tt_\pp$ is $\nm$-normal, hence so is a fortiori $\ss''_2 \sep \pdots \sep \ss''_{\pp-1} \sep \tt_\pp$. By construction, we have 
$\ss''_2 \sep \pdots \sep \ss''_{\pp-1} \sep \tt_\pp = \nmr_{\!\pp-1}(\ss''_2 \sep \pdots \sep \ss''_{\pp-2} \sep \tt_{\pp-1} \sep \ss'_\pp)$, whence $\nm(\ss''_2 \sep \pdots \sep \ss''_{\pp-2} \sep \tt_{\pp-1} \sep \ss'_\pp) = \ss''_2 \sep \pdots \sep \ss''_{\pp-1} \sep \tt_\pp$, and, from there, 
\begin{equation}\label{E:PClass3}
\nmR{\pp-1}_2(\ss''_1 \sep \pdots \sep \ss''_{\pp-2} \sep \tt_{\pp-1} \sep \ss'_\pp) = \ss''_1 \sep \pdots \sep \ss''_{\pp-1} \sep \tt_\pp.
\end{equation}
As $\ss''_1 \sep \pdots \sep \ss''_{\pp-1} \sep \tt_\pp$ is $\nm$-normal, it is $\nm(\ww)$, so merging~\eqref{E:PClass1}, \eqref{E:PClass2}, and~\eqref{E:PClass3} gives $\nm(\ww) = \nmR{\pp-1}_{212}(\ww)$, witnessing that $(\SS, \nm)$ is of right-$\pp$-class~$3$.
\end{proof}

\subsection{Axiomatisation}\label{SS:Axiomatisation}

By Proposition~\ref{P:Class}, if a quadratic normalisation $(\SS, \nm)$ is not of minimal left-class~$\infty$, the restriction~$\nmr$ of~$\nm$ to~$\Pow\SS2$ satisfies~\eqref{E:Class1} and its symmetric counterpart for~$\cc$ large enough. In particular, if $(\SS, \nm)$ has class~$(4, 3)$, then $\nmr$ satisfies~\eqref{E:Class3}, that is, $\nmr_{\!212} = \nmr_{\!2121} = \nmr_{\!1212}$. We now go in the other direction, and prove that any idempotent map~$\ft$ on~$\Pow\SS2$ satisfying the above relation necessarily stems from a quadratic normalisation of class~$(4,3)$, thus completing a proof of Theorem~A.

\begin{prop}\label{P:Axiom}
If~$\SS$ is a set and~$\ft$ is a map from~$\Pow\SS2$ to itself satisfying
\begin{equation}\label{E:Axiom}
\ft_{212} = \ft_{2121} = \ft_{1212},
\end{equation}
there exists a quadratic normalisation~$(\SS, \nm)$ of class~$(4, 3)$ satisfying~$\ft=\nmr$.
\end{prop}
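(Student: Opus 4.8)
The plan is to define the candidate map by the universal recipe of Proposition~\ref{P:NormClass3}: for a length-$\pp$ word~$\ww$ put $\nm(\ww) = \nmr_{\!\delta_\pp}(\ww)$, where $\nmr := \ft$ and $\delta_\pp$ is the sequence of Notation~\ref{N:Iteration}. Several requirements are then immediate: $\nm$ is length-preserving \eqref{E:NormSys1} since $\ft$ is, one-letter words are fixed \eqref{E:NormSys2} since $\delta_1$ is empty, $\nmr = \ft$ since $\delta_2 = 1$, and \eqref{E:Quad2} holds by the very definition of $\nm$. The first thing I would record is that the hypothesis \eqref{E:Axiom}, which is exactly \eqref{E:Class3}, is equivalent to the validity of the domino rule for~$\ft$: applying $\ft_{2121} = \ft_{212}$ to a word $\tt_0\sep\ss_1\sep\ss_2$ whose factor $\ss_1\sep\ss_2$ is $\ft$-invariant yields at once $\ft(\ss'_1\sep\ss'_2) = \ss'_1\sep\ss'_2$, where $\ss'_1\sep\tt_1 = \ft(\tt_0\sep\ss_1)$ and $\ss'_2\sep\tt_2 = \ft(\tt_1\sep\ss_2)$. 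This is the computation already contained in the proof of Lemma~\ref{L:Domino}, read purely at the level of~$\ft$.

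With the domino rule in hand, I would first show that $\nm(\ww)$ is always \emph{$\ft$-normal}, meaning that each of its length-two factors is $\ft$-invariant. The argument is an induction on $\pp = \LG\ww$ following the recursion $\delta_\pp = \sh(\delta_{\pp-1})\sep1\sep2\sep\pdots\sep\pp{-}1$: the block $\sh(\delta_{\pp-1})$ normalises the suffix $\ss_2\sep\pdots\sep\ss_\pp$ by the induction hypothesis, after which the block $1\sep2\sep\pdots\sep\pp{-}1$ performs the left-multiplication of Lemma~\ref{L:LeftMult} by the single letter~$\ss_1$, whose output is $\ft$-normal by the domino rule. Here the combinatorial content of Lemma~\ref{L:LeftMult}, namely that the upper row of Figure~\ref{F:LeftMult} is $\ft$-normal whenever the lower one is, uses only the domino rule and not yet the fact that $\nm$ is a normalisation, so it is available at this stage. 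This delivers \eqref{E:Quad1}: a word all of whose length-two factors are $\ft$-invariant is fixed by every $\nmr_{\!\ii}$, hence is $\nm$-normal, while conversely $\nm(\ww)$ is $\ft$-normal; in particular $\nm$ is idempotent, and post-composing with any $\nmr_{\!\jj}$ changes nothing, i.e.\ $\nmr_{\!\jj}\comp\nmr_{\!\delta_\pp} = \nmr_{\!\delta_\pp}$.

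The heart of the proof is relation~\eqref{E:NormSys3}, which is equivalent to the single-step statement $\nm\comp\nmr_{\!\ii} = \nm$ for every position~$\ii$, since $\uu\sep\nm(\ww)\sep\vv$ is obtained from $\uu\sep\ww\sep\vv$ by a finite sequence of shifted applications of~$\ft$. Here I would exploit that the operators $\nmr_{\!1},\dots,\nmr_{\!\pp-1}$ on $\Pow\SS\pp$ satisfy exactly the defining relations of the monoid~$\MM_\pp$ of~\cite{KraArt, HeOz}: they are idempotent, $\nmr_{\!\ii}$ and $\nmr_{\!\jj}$ commute for $\vert\ii-\jj\vert\ge2$, and $\nmr_{\!\ii{+}1}\nmr_{\!\ii}\nmr_{\!\ii{+}1} = \nmr_{\!\ii}\nmr_{\!\ii{+}1}\nmr_{\!\ii}\nmr_{\!\ii{+}1} = \nmr_{\!\ii{+}1}\nmr_{\!\ii}\nmr_{\!\ii{+}1}\nmr_{\!\ii}$ by the shifted form of~\eqref{E:Axiom}. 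Thus $\ii\mapsto\nmr_{\!\ii}$ factors through the quotient of~$\MM_\pp$ by $\sig\ii^2=\sig\ii$, and $\delta_\pp$ represents the Garside element~$\Delta_\pp$ (the relation of the previous paragraph says precisely that every generator right-divides it). Transporting the Garside-theoretic fact that every generator also left-divides~$\Delta_\pp$, together with idempotency, gives $\sig\ii\sep\Delta_\pp = \Delta_\pp$ in the quotient, which reads through the representation as $\nmr_{\!\delta_\pp}\comp\nmr_{\!\ii} = \nmr_{\!\delta_\pp}$, that is, $\nm\comp\nmr_{\!\ii} = \nm$. Alternatively, without invoking~$\MM_\pp$ as a black box, one splits into cases: for $\ii\ge2$ the move lies inside the suffix, so the identity follows from the length-$(\pp{-}1)$ case via the induction above; the whole difficulty then concentrates in the leftmost move $\ii=1$, which is exactly the assertion that $\sig1$ left-divides~$\Delta_\pp$.

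Granting this, $(\SS,\nm)$ is a quadratic normalisation with $\nmr=\ft$; and since $\nmr$ satisfies~\eqref{E:Class3}, Proposition~\ref{P:Class} shows it is of right-class~$3$ and Lemma~\ref{L:Class} that it is of left-class~$4$, so it is of class~$(4,3)$, completing Theorem~A. The main obstacle is the leftmost-move instance of~\eqref{E:NormSys3}: establishing the absorption $\nmr_{\!\delta_\pp}\comp\nmr_{\!1} = \nmr_{\!\delta_\pp}$, equivalently the left-divisibility of~$\Delta_\pp$ by every generator. This is the asymmetric point of class~$(4,3)$ and is where the Garside structure of~$\MM_\pp$, and the standard braid arguments, genuinely enter.
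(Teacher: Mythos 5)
Your overall architecture coincides with the paper's: define $\nm$ on length-$\pp$ words as $\ft_{\delta_\pp}$, observe that \eqref{E:NormSys1}, \eqref{E:NormSys2}, $\nmr=\ft$ and \eqref{E:Quad2} are automatic, and reduce \eqref{E:NormSys3} to the two absorption identities $\ft_\ii\comp\ft_{\delta_\pp}=\ft_{\delta_\pp}$ and $\ft_{\delta_\pp}\comp\ft_\ii=\ft_{\delta_\pp}$. Your derivation of the first of these (normality of the output) from the domino rule is a legitimate variant of the paper's purely syntactic treatment: the combinatorial content of Lemma~\ref{L:LeftMult} does use only the domino rule, so the induction along $\delta_\pp=\sh(\delta_{\pp-1})\sep1\sep\pdots\sep\pp{-}1$ goes through before one knows that $(\SS,\ft^*)$ is a normalisation. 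Your reduction of the second identity to the single case $\ii=1$ is also correct.

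The gap is that this remaining case, $\ft_{\delta_\pp}\comp\ft_1=\ft_{\delta_\pp}$, is exactly the hard point of the whole proposition, and you do not prove it: you assert that $\sig1$ left-divides the element represented by $\delta_\pp$ in Krammer's monoid and transport this through the representation $\ii\mapsto\ft_\ii$. That divisibility statement is a nontrivial theorem about~$\MM_\pp$ (the paper points to \cite[Prop.~67]{KraArt} for it and deliberately reproves it), not a formal consequence of the relations you list; and the domino rule gives no direct access to it, since it propagates normality when left-multiplying an already normal word, whereas here $\ft_1$ perturbs the two leftmost letters before any normalisation has taken place and overlaps at position~$2$ with the block $\sh(\delta_{\pp-1})$. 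The paper fills precisely this hole by a self-contained double induction (Lemmas~\ref{L:IterSym} and~\ref{L:Delta}): first the alternative expansion $\delta_\pp\equiv\pp{-}1\sp\pdots\sp1\sp\sh(\delta_{\pp-1})$ together with the quasi-braid identities \eqref{E:IterSym2} and \eqref{E:IterSym3}, then $\ii\sp\delta_\pp\equiv\delta_\pp\equiv\delta_\pp\sp\ii$ in the congruence generated by idempotency, far commutation, and the shifted copies of~\eqref{E:Axiom}. To complete your argument you must either reproduce such an induction or cite the relevant statement of~\cite{KraArt} precisely; as written, the decisive step is correctly located but not established.
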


The problem is to extend~$\ft$ into a map~$\ft^*$ on~$\SS^*$ such that~$(\SS,\ft^*)$ is a quadratic normalisation of class~$(4, 3)$. Proposition~\ref{P:NormClass3} leads us into introducing the following extension of~$\ft$.

\begin{defi}
For~$\ft$ a map from~$\Pow\SS2$ to itself, we write $\ft^*$ for the extension of~$\ft$ to~$\SS^*$ defined by $\ft^*(\ss) = \ss$ for~$\ss$ in~$\SS$ and $\ft^*(\ww) = \ft_{\delta_\pp}(\ww)$ for~$\ww$ in~$\Pow\SS\pp$.
\end{defi}

We will prove that, when \eqref{E:Axiom} is satisfied, $(\SS, \ft^*)$ is a quadratic normalisation of class~$(4, 3)$. We begin with preparatory formulas of the form $\ft_\uu = \ft_\vv$ when~$\uu$ and~$\vv$ are sequences of positions connected by a specific equivalence relation.

\begin{defi}
We denote by~$\equiv$ the congruence on the free monoid~$(\NNNN\setminus\{0\})^*$ of all finite sequences of positive integers generated by all shifted copies of
$$1 \sp 1 \equiv 1, \quad
1 \sp 2 \sp 1 \sp 2 \equiv 2 \sp 1 \sp 2 \sp 1 \equiv 2 \sp 1 \sp 2, \quad
1 \sp \ii \equiv \ii \sp 1 \text{\ for $\ii \ge 3$}.$$
\end{defi}

Note that the corresponding quotient monoid is a variant with infinitely many generators of the monoid~$\MM_{\nn}$ of~\cite{KraArt}. Lemmas~\ref{L:IterSym} and~\ref{L:Delta} below are essentially equivalent to~\cite[Prop.~67]{KraArt} but we include a short self-contained proof as our framework and notation are different.

\begin{lemm}\label{L:Equiv}
If~$\ft^*$ satisfies the conditions of Proposition~\ref{P:Axiom}, then, for all $\equiv$-equivalent sequences~$\uu$ and~$\vv$, we have $\ft_\uu = \ft_\vv$.
\end{lemm}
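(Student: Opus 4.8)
The plan is to treat the assignment $\uu\mapsto\ft_\uu$ as an (anti-)homomorphism of monoids and to reduce the statement to checking the three families of defining relations of~$\equiv$ one at a time. The first thing I would record, straight from Notation~\ref{N:Local}, is that for any two finite sequences of positions $\uu,\vv$ one has $\ft_{\uu\vv}=\ft_\vv\comp\ft_\uu$. It follows at once that the relation ``$\ft_\uu=\ft_\vv$ as maps of~$\SS^*$'' is a \emph{congruence} on the free monoid $(\NNNN\setminus\{0\})^*$: if $\ft_{\uu_0}=\ft_{\vv_0}$, then $\ft_{\xx\uu_0\yy}=\ft_\yy\comp\ft_{\uu_0}\comp\ft_\xx=\ft_\yy\comp\ft_{\vv_0}\comp\ft_\xx=\ft_{\xx\vv_0\yy}$ for all $\xx,\yy$. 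Since $\equiv$ is by definition the congruence generated by the listed relations together with all their shifts, it suffices to verify $\ft_{\uu_0}=\ft_{\vv_0}$ for each such generator.

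Next I would dispose of the shifts once and for all. Applying~$\ft$ in position~$\ii$ alters only the entries in positions~$\ii$ and~$\ii+1$, so for every word $\ss\sep\ww$ one has $\ft_{\sh(\uu)}(\ss\sep\ww)=\ss\sep\ft_\uu(\ww)$. Hence $\ft_\uu=\ft_\vv$ implies $\ft_{\sh(\uu)}=\ft_{\sh(\vv)}$, and by iteration it is enough to establish the three families of relations in their \emph{unshifted} form. It is for this reason that the whole argument is phrased at the level of equalities of maps of $\SS^*$, rather than of values on a few distinguished words.

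It then remains to check the three unshifted generators, each of which is immediate. The relation $\ft_{11}=\ft_1$ is exactly the idempotence of~$\ft$, which holds by hypothesis. The relations $\ft_{1212}=\ft_{2121}=\ft_{212}$ are precisely condition~\eqref{E:Axiom}; since every map occurring there touches only positions $1,2,3$, equality on~$\Pow\SS3$ already means equality as maps of~$\SS^*$. Finally, for $\ii\ge3$ the map~$\ft_1$ acts on positions $\{1,2\}$ while~$\ft_\ii$ acts on $\{\ii,\ii+1\}$, and these sets are disjoint because $\ii\ge3$; maps with disjoint supports commute, so $\ft_{1\ii}=\ft_\ii\comp\ft_1=\ft_1\comp\ft_\ii=\ft_{\ii1}$. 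Together with the congruence argument of the first paragraph, this proves $\ft_\uu=\ft_\vv$ whenever $\uu\equiv\vv$.

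I expect the only genuinely delicate point to be the bookkeeping in the reduction to unshifted generators: one must be careful that it is the equality of the \emph{maps}~$\ft_\uu$ and~$\ft_\vv$ (not merely their agreement on particular words) that propagates through shifts and through arbitrary contexts, since that is what makes the kernel of $\uu\mapsto\ft_\uu$ a congruence. Once everything is stated at the level of maps of~$\SS^*$, the three verifications are nothing more than idempotence, a restatement of~\eqref{E:Axiom}, and the disjointness of supports.
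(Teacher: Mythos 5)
Your proof is correct and follows essentially the same route as the paper's: since $\uu\mapsto\ft_\uu$ is an anti-homomorphism of monoids, it suffices to verify the generating relations of~$\equiv$, which come down to idempotence of~$\ft$, condition~\eqref{E:Axiom}, and commutation of maps acting on disjoint positions. The paper's version is terser, leaving the congruence and shift reductions implicit, but the content is identical.
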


\begin{proof}
As~$\ft^*$ is idempotent, $\ft_{\ii\sp\ii}$ coincides with~$\ft_\ii$. For $\vert\ii-\jj\vert \ge 2$, $\ft_\ii$ and $\ft_\jj$ act on disjoint factors, so they commute. Finally, the relations for~$2\sp1\sp2$ and their shifted copies directly reflect~\eqref{E:Axiom}.
\end{proof}

\begin{lemm}\label{L:IterSym}
The following relations are valid for every $\pp \ge 2$:
\begin{gather}
\label{E:IterSym1}
\delta_\pp \equiv \pp-1 \sp \pdots \sp 2 \sp 1 \sp\sh(\delta_{\pp-1}),\\
\label{E:IterSym2}
\pp \sp \pdots \sp 2 \sp 1 \sp \pp \sp \pdots \sp 2 \sp 1 \equiv \pp \sp \pdots \sp 2 \sp 1 \sp \pp \sp \pdots \sp 3 \sp 2,\\
\label{E:IterSym3}
1 \sp 2 \sp \pdots \sp \pp \sp 1 \sp 2 \sp \pdots \sp \pp \equiv 2 \sp 3 \sp \pdots \pp \sp 1 \sp 2 \sp \pdots \sp \pp.
\end{gather}
\end{lemm}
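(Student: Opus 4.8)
The plan is to work entirely inside the congruence~$\equiv$ on sequences of positions, proving the three relations by induction on~$\pp$ and using a reversal symmetry to avoid duplicating work. First I would record that the reversal map $\ii_1 \sp \pdots \sp \ii_\kk \mapsto \ii_\kk \sp \pdots \sp \ii_1$ preserves~$\equiv$: each family of defining relations is stable under it, since $1\sp1\equiv1$ and $1\sp\ii\equiv\ii\sp1$ are palindromic and the triple $1\sp2\sp1\sp2 \equiv 2\sp1\sp2\sp1 \equiv 2\sp1\sp2$ is carried to itself. Reading \eqref{E:IterSym3} backwards sends $1\sp\pdots\sp\pp$ to $\pp\sp\pdots\sp1$ and the truncated run $2\sp\pdots\sp\pp$ to $\pp\sp\pdots\sp2$, so \eqref{E:IterSym2} is precisely the reversal of \eqref{E:IterSym3}. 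It thus suffices to prove \eqref{E:IterSym1} and \eqref{E:IterSym3}.

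For \eqref{E:IterSym1}, the recursion reads $\delta_\pp = \sh(\delta_{\pp-1}) \sp 1 \sp 2 \sp \pdots \sp (\pp-1)$, and the goal is to turn this into $(\pp-1) \sp \pdots \sp 1 \sp \sh(\delta_{\pp-1})$. Applying~$\sh$ to the induction hypothesis rewrites $\sh(\delta_{\pp-1})$ as $(\pp-1) \sp \pdots \sp 2 \sp \sh^2(\delta_{\pp-2})$. The key point is that every entry of~$\sh^2(\delta_{\pp-2})$ is at least~$3$, hence commutes with the leading~$1$ of the ascending run $1 \sp 2 \sp \pdots \sp (\pp-1)$; sliding that~$1$ leftward across the whole block~$\sh^2(\delta_{\pp-2})$ and then using $\sh^2(\delta_{\pp-2}) \sp 2 \sp \pdots \sp (\pp-1) = \sh(\delta_{\pp-1})$ collapses the word to $(\pp-1) \sp \pdots \sp 2 \sp 1 \sp \sh(\delta_{\pp-1})$. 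The base case $\pp = 2$ is immediate, and the whole argument uses only commutations of type $1 \sp \ii \equiv \ii \sp 1$ together with the recursion, so it is independent of the other two relations.

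For \eqref{E:IterSym3} I would induct on~$\pp$, the base case $\pp = 2$ being the defining relation $1\sp2\sp1\sp2 \equiv 2\sp1\sp2$. In the step, I first slide the interior occurrence of~$\pp$ in $(1\sp\pdots\sp\pp)(1\sp\pdots\sp\pp)$ rightward past $1 \sp \pdots \sp (\pp-2)$, producing a factor $\pp \sp (\pp-1) \sp \pp$, expand it through the shifted relation $\pp \sp (\pp-1) \sp \pp \equiv (\pp-1) \sp \pp \sp (\pp-1) \sp \pp$, and regroup to reach $(1\sp\pdots\sp(\pp-1))(1\sp\pdots\sp(\pp-1)) \sp \pp \sp (\pp-1) \sp \pp$. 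The induction hypothesis replaces the two leading blocks by $(2\sp\pdots\sp(\pp-1))(1\sp\pdots\sp(\pp-1))$, after which one more slide of~$\pp$ together with the contraction $(\pp-1) \sp \pp \sp (\pp-1) \sp \pp \equiv \pp \sp (\pp-1) \sp \pp$ yields $(2\sp\pdots\sp\pp)(1\sp\pdots\sp\pp)$, as required. The main obstacle, and the only genuinely delicate point, is the bookkeeping in this step: one must apply the asymmetric relation $2\sp1\sp2 \equiv 1\sp2\sp1\sp2$ in both directions (expanding once, contracting once) and keep the two ascending runs exactly aligned so that the hypothesis applies; all remaining moves merely reposition~$\pp$ by commutation. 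No deeper input is required, which keeps the lemma at the level of elementary manipulation in~$\equiv$.
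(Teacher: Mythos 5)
Your proof is correct and follows essentially the same route as the paper's: the argument for~\eqref{E:IterSym1} is identical, and for the other two relations the paper likewise proves one by induction (it chooses~\eqref{E:IterSym2}) and obtains the other ``by symmetry'', which is exactly the reversal anti-automorphism you make explicit. The only cosmetic difference is in the induction step: you apply the relation $2\sp1\sp2 \equiv 1\sp2\sp1\sp2$ twice (once expanding, once contracting) and the induction hypothesis once, whereas the paper applies the induction hypothesis twice (in shifted form) and the defining relation once; both chains are valid.
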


\begin{proof}
We use induction on~$\pp \ge 2$. For $\pp = 2$, \eqref{E:IterSym1} reads $1 \equiv 1 \sp \sh(\varepsilon)$, which is valid. Assume $\pp \ge 3$. Then we find
\begin{align*}
\delta_\pp 
&= \sh(\delta_{\pp-1}) \sp 1 \sp 2 \sp \pdots \sp \pp-1 
	&&\text{by definition of~$\delta_\pp$,}\\
&\equiv \sh(\pp-2 \sp \pdots \sp 2 \sp 1 \sp \sh(\delta_{\pp-2})) \sp 1 \sp 2 \sp \pdots \sp \pp-1 
	&&\text{by induction hypothesis,}\\
&= \pp-1 \sp \pdots \sp 3 \sp 2 \sp \sh^2(\delta_{\pp-2})) \sp 1 \sp 2 \sp \pdots \sp \pp-1\\
&\equiv \pp-1 \sp \pdots \sp 3 \sp 2 \sp 1 \sp \sh^2(\delta_{\pp-2})) \sp 2 \sp \pdots \sp \pp-1
	&&\text{by $\ii\sp 1\equiv 1\sp\ii$ for~$\ii=\sh^2(\jj)$,}\\
&= \pp-1 \sp \pdots \sp 3 \sp 2 \sp 1 \sp \sh(\sh(\delta_{\pp-2})) \sp 1 \sp \pdots \sp \pp-2)\\
&= \pp-1 \sp \pdots \sp 3 \sp 2 \sp 1 \sp \sh(\delta_{\pp-1})
	&&\text{by definition of~$\delta_{\pp-1}$.}
\end{align*}
Next, for $\pp = 2$, \eqref{E:IterSym2} reads $2\sp 1 \sp 2 \sp 1 \equiv 2 \sp 1 \sp 2$, which is valid. For $\pp \ge 3$, we find
\begin{align*}
\pp \sp \pdots \sp 2 \sp 1 \sp \pp \sp \pdots \sp 2 \sp 1 
&\equiv \pp \sp \pdots \sp 2 \sp \pp \sp \pdots \sp 3 \sp 1 \sp 2 \sp 1
	&&\text{by $1\sp\ii \equiv \ii\sp 1$ for~$\ii\ge 3$,} \\
&= \sh(\pp-1 \sp \pdots \sp 1 \sp \pp-1 \sp \pdots \sp 2) \sp 1 \sp 2 \sp 1\\
&\equiv \sh(\pp-1 \sp \pdots \sp 1 \sp \pp-1 \sp \pdots \sp 2 \sp 1) \sp 1 \sp 2 \sp 1
	&&\text{by induction hypothesis,}\\
&= \pp \sp \pdots \sp 2 \sp \pp \sp \pdots \sp 3 \sp 2 \sp 1 \sp 2 \sp 1\\
&\equiv \pp \sp \pdots \sp 2 \sp \pp \sp \pdots \sp 3 \sp 2 \sp 1 \sp 2
	&&\text{by definition of~$\equiv$,}\\
&= \sh(\pp-1 \sp \pdots \sp 1 \sp \pp-1 \sp \pdots \sp 2 \sp 1) \sp 1 \sp 2\\
&\equiv \sh(\pp-1 \sp \pdots \sp 1 \sp \pp-1 \sp \pdots \sp 2) \sp 1 \sp 2
	&&\text{by induction hypothesis,}\\
&= \pp \sp \pdots \sp 2 \sp \pp \sp \pdots \sp 3 \sp 1 \sp 2 \\
&\equiv \pp \sp \pdots \sp 2 \sp 1 \sp \pp \sp \pdots \sp 3 \sp 2
	&&\text{by $\ii \sp 1 \equiv 1\sp \ii$ for~$\ii\ge 3$.}
\end{align*}
The argument for \eqref{E:IterSym3} is symmetric.
\end{proof}

\begin{lemm}\label{L:Delta}
The following relations are valid for every $\pp \ge 2$ and $1 \le \ii < \pp$:
\begin{equation}\label{E:Delta}
\delta_\pp \equiv \delta_\pp \sp \ii \equiv \ii \sp \delta_\pp.
\end{equation}
\end{lemm}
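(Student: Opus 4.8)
The plan is to establish \eqref{E:Delta} by induction on~$\pp$, proving the two equivalences $\delta_\pp \equiv \delta_\pp \sep \ii$ and $\delta_\pp \equiv \ii \sep \delta_\pp$ simultaneously and using the relations of Lemma~\ref{L:IterSym} throughout. The base case $\pp = 2$ is the single defining relation $1 \sep 1 \equiv 1$. For the inductive step the decisive split is on~$\ii$: the cases $\ii \ge 2$ fall out of the induction hypothesis, whereas the two cases $\ii = 1$ are the substance of the proof and require the absorption relations \eqref{E:IterSym2} and \eqref{E:IterSym3}.

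Consider first $\ii \ge 2$. For the left equivalence I keep the defining form $\delta_\pp = \sh(\delta_{\pp-1}) \sep 1 \sep 2 \sep \pdots \sep \pp-1$ and note that $\ii \sep \sh(\delta_{\pp-1}) = \sh((\ii-1) \sep \delta_{\pp-1})$; since $1 \le \ii-1 < \pp-1$, the induction hypothesis gives $(\ii-1) \sep \delta_{\pp-1} \equiv \delta_{\pp-1}$, hence $\ii \sep \delta_\pp \equiv \delta_\pp$. For the right equivalence I first move the shifted block to the right end using \eqref{E:IterSym1}, namely $\delta_\pp \equiv \pp-1 \sep \pdots \sep 1 \sep \sh(\delta_{\pp-1})$, so that $\delta_\pp \sep \ii \equiv \pp-1 \sep \pdots \sep 1 \sep \sh(\delta_{\pp-1} \sep (\ii-1))$; the induction hypothesis then absorbs the trailing $\ii-1$, giving $\delta_\pp \sep \ii \equiv \delta_\pp$.

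The case $\ii = 1$ is where the only real difficulty lies. One cannot simply commute the extra letter~$1$ inward, because every such attempt leaves an isolated factor $1 \sep 2 \sep 1$, and this factor is $\equiv$-irreducible --- in particular $1 \sep 2 \sep 1 \not\equiv 2 \sep 1 \sep 2$, since no defining relation applies to either word to reach the other. The way around this is to manufacture a repeated monotone block and then collapse it. For $1 \sep \delta_\pp \equiv \delta_\pp$ I use the identity $\sh(\delta_{\pp-1}) = \sh^2(\delta_{\pp-2}) \sep 2 \sep \pdots \sep \pp-1$: commuting the leading~$1$ rightward past $\sh^2(\delta_{\pp-2})$ (whose entries are all $\ge 3$) and merging it with $2 \sep \pdots \sep \pp-1$ produces two consecutive copies of the increasing block $1 \sep \pdots \sep \pp-1$, which collapse to one via \eqref{E:IterSym3}; refolding the result gives exactly $\delta_\pp$. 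For $\delta_\pp \sep 1 \equiv \delta_\pp$ I dualise: two applications of \eqref{E:IterSym1} yield $\delta_\pp \equiv \pp-1 \sep \pdots \sep 1 \sep \pp-1 \sep \pdots \sep 2 \sep \sh^2(\delta_{\pp-2})$, and commuting the trailing~$1$ leftward past $\sh^2(\delta_{\pp-2})$ reconstitutes a second decreasing block $\pp-1 \sep \pdots \sep 1$, so that the two copies collapse via \eqref{E:IterSym2} and refold to~$\delta_\pp$.

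Thus the whole argument rests on the interplay between the commutation $1 \sep \ii \equiv \ii \sep 1$ for $\ii \ge 3$, used to slide the stray~$1$ across the top block of entries $\ge 3$, and the two collapse relations of Lemma~\ref{L:IterSym}. I expect the bookkeeping of the monotone blocks in the two $\ii = 1$ cases to be the main obstacle, precisely because the tempting shortcut through $1 \sep 2 \sep 1$ is unavailable; once the doubled block is exposed, \eqref{E:IterSym2} and \eqref{E:IterSym3} finish the job and, through Lemma~\ref{L:Equiv}, deliver the corresponding map identities $\ft_\uu = \ft_\vv$ used in the sequel.
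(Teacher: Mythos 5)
Your proof is correct and follows essentially the same route as the paper: induction on~$\pp$, with the cases $\ii \ge 2$ absorbed by the induction hypothesis after shifting via \eqref{E:IterSym1} or the defining form of~$\delta_\pp$, and the cases $\ii = 1$ handled by exposing a doubled monotone block across $\sh^2(\delta_{\pp-2})$ and collapsing it with \eqref{E:IterSym2} (for $\delta_\pp \sp 1$) and \eqref{E:IterSym3} (for $1 \sp \delta_\pp$). The only cosmetic differences are that the paper treats $\pp = 3$ as a separate explicit case and dismisses the $\ii \sp \delta_\pp$ direction as ``symmetric'', which you spell out, together with the (correct) observation that $1\sp 2\sp 1 \not\equiv 2\sp 1\sp 2$ is what forces the detour.
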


\begin{proof}
We use induction on $\pp \ge 2$. For $\pp = 2$, the only case to consider is $\ii = 1$, and \eqref{E:Delta} then reduces to $1 \equiv 1 \sp 1$, which is true by definition. Assume $\pp = 3$. For $\ii = 1$, \eqref{E:Delta} reduces to $2 \sp 1 \sp 2 \equiv 2 \sp 1 \sp 2 \sp 1 \equiv 1 \sp 2 \sp 1 \sp 2$ and, for $\ii = 2$, to $2 \sp 1 \sp 2 \equiv 2 \sp 1 \sp 2 \sp 2$ and $2 \sp 1 \sp 2 \equiv 2 \sp 2 \sp 1 \sp2$, which are true by the definition of~$\equiv$.

Assume now $\pp \ge 4$. For $\ii = 1$, we find
\begin{align*}
\delta_\pp \sp 1
&\equiv \pp-1 \sp \pdots \sp 2 \sp 1 \sp \pp-1 \sp \pdots \sp 2 \sp \sh^2(\delta_{\pp-2}) \sp 1 
&&\text{by~\eqref{E:IterSym1} twice,} \\
&\equiv \pp-1 \sp \pdots \sp 2 \sp 1 \sp \pp-1 \sp \pdots \sp 2 \sp 1 \sp \sh^2(\delta_{\pp-2})
&&\text{by $\ii\sp 1\equiv 1\sp\ii$ for $\ii\ge 3$,} \\
&\equiv \pp-1 \sp \pdots \sp 2 \sp 1 \sp \pp-1 \sp \pdots \sp 2 \sp \sh^2(\delta_{\pp-2})
&&\text{by~\eqref{E:IterSym2},} \\
&\equiv \pp-1 \sp \pdots \sp 2 \sp 1 \sp \sh(\delta_{\pp-1}) \equiv \delta_\pp
&&\text{by~\eqref{E:IterSym1} twice,}
\end{align*}
and, for $2 \le \ii < \pp$, using~\eqref{E:IterSym1} and the induction hypothesis, we find
$$\delta_\pp \sp \ii
\equiv \pp-1 \sp \pdots \sp 1 \sp \sh(\delta_{\pp-1}) \sp \ii
= \pp-1 \sp \pdots \sp 1 \sp \sh(\delta_{\pp-1} \sp \ii-1)
\equiv \pp-1 \sp \pdots \sp 1 \sp \sh(\delta_{\pp-1}) \equiv \delta_\pp.$$
The argument for $\ii \sp \delta_\pp \equiv \delta_\pp$ is symmetric, with the definition of~$\delta_\pp$ and~\eqref{E:IterSym3} replacing~\eqref{E:IterSym1} and~\eqref{E:IterSym2}.
\end{proof}

\begin{proof}[Proof of Proposition~\ref{P:Axiom}]
Assume that~$\ft$ is idempotent and satisfies~\eqref{E:Axiom}. Lemmas~\ref{L:Equiv} and~\ref{E:Delta} imply, for all~$\pp \ge 3$ and every~$\ww$ in~$\Pow\SS\pp$, the equalities 
\begin{equation}\label{E:Method}
\ft^*(\ft_\ii(\ww)) = \ft^*(\ww) \quad \text{and} \quad \ft_\ii(\ft^*(\ww)) = \ft^*(\ww) \text{\quad for $\ii$ with $1 \le \ii < \pp$}.
\end{equation}
Let $\uu,\vv,\ww$ be $\SS$-words with respective lengths~$\mm,\nn,\pp$. Then we find
$$\ft^*(\uu\sep\ft^*(\ww)\sep\vv) = \ft_{\delta_{\mm+\nn+\pp}}(\uu\sep\ft_{\delta_{\pp}}(\ww)\sep\vv) = \ft_{\delta_{\mm+\nn+\pp}}(\ft_{\sh^{\mm}(\delta_{\pp})}(\uu\sep\ww\sep\vv)).$$
The map~$\ft_{\sh^{\mm}(\delta_{\pp})}$ is a composite of maps~$\ft_{\ii}$ with~$\mm+1\le\ii\le\mm+\pp-1$, so Lemma~\ref{L:Delta} implies
$$\ft^*(\uu\sep\ft^*(\ww)\sep\vv) = \ft_{\delta_{\mm+\nn+\pp}}(\uu\sep\ww\sep\vv) = \ft^*(\uu\sep\ww\sep\vv),$$
and we deduce that $(\SS,\ft^*)$ satisfies~\eqref{E:NormSys3}. As it also satisfies~\eqref{E:NormSys1} and~\eqref{E:NormSys2} by the definition of~$\ft^*$, it is a normalisation.
\end{proof}

The following example shows that the axiomatisation of class~$(4,3)$ normalisations provided by Proposition~\ref{P:Axiom} does not extend to higher classes.

\begin{exam}
Let us consider the rewriting system of Example~\ref{X:Large4Class} with $\nn=\infty$, that is, $\SS_{\infty}=\{ \tta, \ttb_1, \ttb_2, ..., \ttc_1, \ttc_2, ...\}$ with the rules $\tta\ttb_\ii {\to} \tta\ttb_{\ii+1}$ and $\ttb_{\ii+1}\ttc_\ii {\to} \ttb_{\ii+1} \ttc_{\ii+1}$ for~$\ii$ odd and $\ttc_\ii\tta {\to} \ttc_{\ii+1}\tta$ and $\ttb_\ii\ttc_{\ii+1} {\to} \ttb_{\ii+1}\ttc_{\ii+1}$ for~$\ii$ even. The associated map~$\ft$ on~$\Pow{\SS_{\infty}}2$ satisfies the relation $\ft_{12121}=\ft_{21212}$, but no quadratic normalisation~$(\SS_{\infty},\nm)$ satisfies $\ft=\nmr$. Indeed, no $\SS_{\infty}$-word that can be reached from $\tta\ttb_1\ttc_1\tta$ by successive applications of~$\ft$ on length-two factors is normal.
\end{exam}

\section{Class and termination}\label{S:Termination}

By Proposition~\ref{P:QuadNormRewr}, a quadratic normalisation~$(\SS,\nm)$ yields a reduced quadratic rewriting system~$(\SS,\RR)$ that is normalising and confluent. This however does not rule out the possible existence of infinite rewriting sequences, which we investigate here.

The section comprises three parts. We first consider the case of class~$(3, 3)$ and prove an easy convergence result (Subsection~\ref{SS:Termin33}). Next, the case of classes~$(3,4)$ and~$(4,3)$ is investigated in Subsection~\ref{SS:Termin43}, where the not-so-easy convergence result stated as Theorem~B is established. Finally, we show in Subsection~\ref{SS:Termin44} that the previous result is optimal by constructing a non-convergent example in class~$(4, 4)$.

\subsection{Termination in class~$(3,3)$}\label{SS:Termin33}

We first consider the case of quadratic normalisations of class~$(3,3)$, and we use an argument of finiteness on symmetric groups to prove:

\begin{prop}\label{P:Termin33}
If $(\SS, \nm)$ is a quadratic normalisation of class~$(3,3)$, then the associated rewriting system~$(\SS, \RR)$ is convergent, and so is $(\SSm,\RR_\ee)$ if~$\ee$ is an $\nm$-neutral element of~$\SS$. More precisely, every rewriting sequence from a length-$\pp$ word has length at most $\pp(\pp-1)/2$.
\end{prop}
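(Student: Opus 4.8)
The plan is to exploit the finiteness of the symmetric group through its $0$-Hecke monoid. First I would record that, in class~$(3,3)$, the family of maps $\nmr_{\!1}, \dots, \nmr_{\!\pp-1}$ acting on~$\Pow\SS\pp$ satisfies the defining relations of the $0$-Hecke monoid of the symmetric group~$\Sym_\pp$. Indeed, $\nmr$ is idempotent by Proposition~\ref{P:QuadPres}, so $\nmr_{\!i}\comp\nmr_{\!i}=\nmr_{\!i}$; two maps $\nmr_{\!i}, \nmr_{\!j}$ with $\vert i-j\vert\ge 2$ act on disjoint length-two factors, hence commute; and the braid relation $\nmr_{\!i}\comp\nmr_{\!i+1}\comp\nmr_{\!i}=\nmr_{\!i+1}\comp\nmr_{\!i}\comp\nmr_{\!i+1}$ is, after restriction to the three entries in positions $i, i+1, i+2$, exactly the identity $\nmr_{\!121}=\nmr_{\!212}$ furnished by Proposition~\ref{P:Class} (equation~\eqref{E:Class2} with $\cc=3$). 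Since each $\nmr_{\!i}$ depends only on its two active entries, these relations hold as equalities of self-maps of the whole of~$\Pow\SS\pp$.

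Consequently there is a morphism~$\Phi$ from the $0$-Hecke monoid~$\HH_\pp$ of~$\Sym_\pp$ to the monoid of self-maps of~$\Pow\SS\pp$ under composition, sending the generator~$\pi_i$ to~$\nmr_{\!i}$. I would then recall the two classical facts about~$\HH_\pp$ that the argument requires: its elements are indexed by the permutations $w\in\Sym_\pp$, each carrying the Coxeter length $\ell(w)\le \pp(\pp-1)/2$; and left multiplication by a generator obeys $\pi_i\,\pi_w=\pi_{s_iw}$ when $\ell(s_iw)>\ell(w)$ and $\pi_i\,\pi_w=\pi_w$ otherwise. In particular, multiplying an element of~$\HH_\pp$ on the left by a generator either strictly increases its length or leaves it unchanged.

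Now consider a rewriting sequence $\ww_0\fl_\RR\ww_1\fl_\RR\pdots\fl_\RR\ww_\ell$ issued from a length-$\pp$ word, and let~$i_k$ be the position rewritten at the $k$-th step, so that $\ww_k=\nmr_{\!i_k}(\ww_{k-1})$. Setting $h_k=\pi_{i_k}\pdots\pi_{i_1}$ in~$\HH_\pp$ gives $\Phi(h_k)=\nmr_{\!i_k}\comp\pdots\comp\nmr_{\!i_1}$ and hence $\ww_k=\Phi(h_k)(\ww_0)$, with $h_k=\pi_{i_k}\,h_{k-1}$. The key point is that the collapse case of the multiplication rule cannot occur: if $h_k=h_{k-1}$ in~$\HH_\pp$, then $\ww_k=\Phi(h_k)(\ww_0)=\Phi(h_{k-1})(\ww_0)=\ww_{k-1}$, contradicting that $\ww_{k-1}\fl_\RR\ww_k$ is a genuine rewriting step, since by~\eqref{E:QuadRewrOfNorm} each rule strictly changes its length-two factor. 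Hence $\ell(h_k)=\ell(h_{k-1})+1$ at every step, so $\ell=\ell(h_\ell)\le \pp(\pp-1)/2$; this bounds the length of every rewriting sequence and proves that~$(\SS,\RR)$ terminates. As $(\SS,\RR)$ is already normalising and confluent by Proposition~\ref{P:QuadNormRewr}, it is convergent, and the convergence of~$(\SSm,\RR_\ee)$ then follows from Proposition~\ref{P:NeutralQuadRewr}.

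I expect the main obstacle to be the passage recorded in the second and third paragraphs: namely, identifying the monoid generated by the $\nmr_{\!i}$ with (a quotient of) the $0$-Hecke monoid and invoking its length and multiplication rules correctly, together with the verification that a length-preserving collapse in~$\HH_\pp$ forces a trivial rewrite. Once the morphism~$\Phi$ and the strict-increase property of~$\ell$ are in place, the termination bound and the deduction of convergence are immediate.
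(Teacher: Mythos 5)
Your proof is correct and follows essentially the same route as the paper: both arguments rest on the observation that, in class~$(3,3)$, the maps $\nmr_{\!1}\wdots\nmr_{\!\pp-1}$ satisfy idempotency, far commutation, and the braid relation, and then bound rewriting sequences by the maximal Coxeter length $\pp(\pp-1)/2$ in~$\Sym_\pp$. The only difference is presentational: where you package the argument into the standard structure theory of the $0$-Hecke monoid (elements indexed by permutations, left multiplication either increasing length or collapsing), the paper unrolls the same content into a direct induction showing that the sequence of positions is a reduced word, using the exchange lemma together with the identity $\nmr_{\!121}=\nmr_{\!212}$ and idempotency to derive the same contradiction you obtain from the collapse case.
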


\begin{proof}
Assume that $\ww_0\wdots\ww_{\ell}$ are $\SS$-words satisfying $\ww_{\kk}\fl_\RR\ww_{\kk+1}$ for $0\le\kk<\ell$. Let~$\pp$ be the common length of the $\SS$-words~$\ww_\kk$. By assumption, for every~$\kk \ge 1$, we have $\ww_{\kk}=\nmr_{\!\ii_{\kk}}(\ww_{\kk-1})$ for some~$1\le \ii_{\kk}<\pp$, with $\ww_\kk \neq \ww_{\kk-1}$. 

Let us observe, by induction on~$\kk$, that~$\uu=\ii_1\sep\pdots\sep\ii_\kk$ is reduced in the sense of Coxeter theory, that is, it is a minimal-length representative of the associated element of the symmetric group~$\Sym_{\pp}$. For $\kk = 0$, the result is true as the empty word is reduced. Assume $\kk \ge 1$, and write $\uu = \uu'\sep \ii$. By induction hypothesis,~$\uu'$ is reduced. If $\uu'\sep\ii$ is not reduced, then, by the exchange lemma for~$\Sym_\pp$, see~\cite{BjBr}, there exists a sequence of positions~$\uu''$ such that~$\uu'$ is equivalent to~$\uu''\sep\ii$ modulo the braid relations. Now, by Proposition~\ref{P:Class}, the assumption that~$(\SS, \nm)$ is of class~$(3, 3)$ implies $\nmr_{\!121} = \nmr_{\!212}$ and, from there, the equivalence of~$\uu'$ and~$\uu''\sep\ii$ modulo the braid relations implies $\nmr_{\!\uu'} = \nmr_{\!\uu''\sep\ii}$. Putting $\ww'=\nmr_{\!\uu''}(\ww_0)$, we obtain $\ww_{\kk-1}=\nmr_{\!\ii}(\ww')$ and, since~$\nmr$ is idempotent, $\ww_{\kk}=\nmr_{\!\ii}(\nmr_{\!\ii}(\ww'))=\nmr_{\!\ii}(\ww')=\ww_{\kk-1}$, which contradicts $\ww_{\kk-1}\fl_{\RR}\ww_{\kk}$. So~$\uu$ must be reduced.

Now, it is well-known that the length~$\ell$ of a reduced word representing an element of~$\Sym_\pp$ is bounded above by $\pp(\pp-1)/2$, for instance because~$\ell$ is the number of inversions of the permutation represented by~$\uu$. So~$(\SS,\RR)$ terminates and, by Proposition~\ref{P:NeutralQuadRewr}, so does~$(\SS_\ee,\RR_\ee)$ if~$\ee$ is an $\nm$-neutral element in~$\SS$.
\end{proof}

\begin{rema*}
The bound $\pp(\pp-1)/2$ in Proposition~\ref{P:Termin33} is sharp, since, for the lexicographic normalisation~$(\SS,\nm)$ of Example~\ref{X:Abelian}, normalising $\tta_{\pp}\sep\pdots\sep\tta_1$ with $\tta_1<\pdots<\tta_{\pp}$ actually requires $\pp(\pp- 1)/2$ steps. 
\end{rema*}

Proposition~\ref{P:Termin33} applies to the example of plactic monoids, described thereafter. Those monoids have known normalisations that fit into our setting of quadratic normalisations, and were among the original motivations for extending the framework of Garside normalisation to the current one.

\begin{exam}\label{X:Plactic}
If~$\XX$ is a totally ordered finite set, the \emph{plactic monoid over~$\XX$} is the monoid~$\PP_{\XX}$ generated by~$\XX$ and subject to $\ttx\ttz\tty = \ttz\ttx\tty$, for $\ttx\le\tty<\ttz$, and $\tty\ttx\ttz = \tty\ttz\ttx$, for $\ttx<\tty\le\ttz$. We refer to~\cite{CainGrayMalheiro} for a recent reference on the following facts. The monoid~$\PP_{\XX}$ is also generated by the family~$\SS$ of columns over~$\XX$ (the strictly decreasing products of elements of~$\XX$). A pair $\cc\sep\cc'$ of columns is \emph{normal} if $\LG{\cc}\ge\LG{\cc'}$ holds and, for every $1\le\kk\le\LG{\cc'}$, the $\kk$th element of~$\cc$ is at most the one of~$\cc'$. Every equivalence class of $\XX$-words contains a unique tableau (a product $\cc_1\pdots\cc_{\nn}$ of columns such that each~$\cc_{\ii}\sep\cc_{\ii+1}$ is normal), with minimal length in terms of columns: thus, mapping a $\SS$-word to the unique corresponding tableau defines a geodesic normal form~$\NF$ on~$(\PP_{\XX},\SS_{\ee})$, where~$\ee$ denotes the empty column. 

We consider the normalisation~$(\SS^{\ee},\nm)$ associated to~$\NF$, which satisfies~\eqref{E:Quad1} by the definition of tableaux. Moreover, for every $\SS$-word~$\ww$, the tableau~$\NF(\ww)$ can be computed from any $\SS$-word~$\ww$ by Schensted's insertion algorithm, progressively replacing each pair $\cc\sep\cc'$ of subsequent columns of~$\ww$ by $\NF(\cc\sep\cc')$, which is a tableau with one or two columns. So, the normalisation~$(\SS^{\ee},\nm)$ also satisfies~\eqref{E:Quad2}, so that it is quadratic, and, when~$\XX$ contains at least two elements, it is of minimal class~$(3,3)$ as testified by the computations of~\cite[\S\S4.2--4.4]{BokutChenChenLi}. By Proposition~\ref{P:Termin33}, we recover~\cite[Th.~3.4]{CainGrayMalheiro}: the rewriting system~$(\SS,\RR)$ with $\RR=\{ \cc\sep\cc' \fl \NF(\cc\sep\cc') \mid \cc,\cc'\in\SS\}$ is finite, convergent and it presents~$\PP_{\XX}$. A similar argument leads to a (nonfinite) convergent quadratic presentation of~$\PP_{\XX}$ in terms of \emph{rows}, which are nondecreasing products of elements of~$\XX$. The proof that the class is~$(3,3)$ is given in~\cite[\S\S3.2--3.4]{BokutChenChenLi}.
\end{exam}

\subsection{Termination in class~$(4,3)$}\label{SS:Termin43}

We now consider the case of class~$(4, 3)$ and establish the general termination result stated as Theorem~B:

\begin{prop}\label{P:43Terminating}
If $(\SS, \nm)$ is a quadratic normalisation of class~$(4,3)$, then the associated rewriting system~$(\SS, \RR)$ is convergent, and so is $(\SSm,\RR_\ee)$ if~$\ee$ is an $\nm$-neutral element of~$\SS$. More precisely, every rewriting sequence from a length-$\pp$ word has length at most $2^\pp - \pp - 1$.
\end{prop}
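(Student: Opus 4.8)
The plan is to prove convergence by establishing termination, since confluence and the normalising property are already granted by Proposition~\ref{P:QuadNormRewr}; the statement for $(\SSm,\RR_\ee)$ then follows from Proposition~\ref{P:NeutralQuadRewr}\ITEM2. So everything reduces to the quantitative claim: every rewriting sequence $\ww_0 \fl_\RR \ww_1 \fl_\RR \pdots \fl_\RR \ww_\ell$ issued from a length-$\pp$ word satisfies $\ell \le f(\pp)$, where $f(\pp) = 2^\pp - \pp - 1$. The natural strategy is induction on~$\pp$, driven by the recursion $f(\pp) = 2\,f(\pp-1) + (\pp-1)$ (with $f(1)=0$, $f(2)=1$), which the bound indeed satisfies.

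The central tool is the domino rule, available here by Lemma~\ref{L:Domino}, together with its consequence Lemma~\ref{L:LeftMult}: left-multiplying a normal word by a single generator can be renormalised by one left-to-right sweep of $\qq$ steps, the grid of Figure~\ref{F:LeftMult} guaranteeing that normality of the bottom row forces normality of the top row one domino at a time. I would record each step $\ww_\kk \fl_\RR \ww_{\kk+1}$ by the position~$\ii_\kk$ at which $\nmr$ is applied, and split the steps into those acting at position~$1$ (touching the first letter) and those acting at positions $\ge 2$. The latter, reindexed, are genuine rewriting steps on the length-$(\pp-1)$ suffix, and between consecutive position-$1$ steps they form blocks, each of which is a full rewriting sequence on a length-$(\pp-1)$ word, hence of length at most $f(\pp-1)$ by the induction hypothesis.

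The main obstacle is precisely that an adversarial sequence need not normalise the suffix before perturbing the first letter, so a crude count allows one long block per position-$1$ step and fails to close the induction. The point where the domino rule becomes essential is in showing that the blocks are \emph{cheap} once work has been done: a position-$1$ step $\ss_1\sep\ss_2 \fl \nmr(\ss_1\sep\ss_2)$ leaves the position-$1$ junction normal and can disturb normality only at the single junction immediately to its right, so—reading the grid of Figure~\ref{F:Grid} and Proposition~\ref{P:NormClass3} diagrammatically—its effect propagates as one rightward wave rather than regenerating a full length-$(\pp-1)$ normalisation. Making this precise, by tracking along the sequence the advancing diagonal of already-normal dominoes, is what supplies the \emph{second} copy of $f(\pp-1)$ and the additive $\pp-1$ in the recursion, and is the delicate heart of the argument.

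It is worth flagging, to explain why the bound is exponential rather than quadratic, that the easy measure counting non-$\nm$-normal length-two factors does not decrease: a single step at position~$\ii$ can create defects at both neighbouring junctions $\ii-1$ and $\ii+1$, so no per-factor weighting is monotone. This is the structural reason the symmetric-group / reduced-word argument used for class~$(3,3)$ in Proposition~\ref{P:Termin33} has no direct analogue here—the relevant quotient monoid (Krammer's asymmetric monoid) is infinite—and why controlling the interaction of the first column with the tail via the domino rule, rather than a finiteness count, is the route I would pursue.
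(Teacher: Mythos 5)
You have correctly identified the global architecture (reduce to termination via Propositions~\ref{P:QuadNormRewr} and~\ref{P:NeutralQuadRewr}, induct on~$\pp$, use the domino rule and Lemma~\ref{L:LeftMult}, and note why the class-$(3,3)$ reduced-word argument fails because Krammer's monoid is infinite and no per-junction weighting is monotone). But the proof itself is missing: you explicitly defer ``the delicate heart of the argument'', namely the bookkeeping that turns the domino rule into the bound, and that deferral is exactly where all the content lies. Your sketch of that bookkeeping --- split steps by literal position in the word into position-$1$ steps and suffix steps, bound the blocks between position-$1$ steps by $f(\pp-1)$, and hope the domino rule makes all but two blocks ``cheap'' so as to realise the recursion $f(\pp)=2f(\pp-1)+(\pp-1)$ --- is not something I can see how to complete: nothing in your plan bounds the \emph{number} of position-$1$ steps, and a decomposition by word-position does not interact well with the domino rule, which propagates information along the diagonal of a grid rather than along a fixed coordinate of the word.

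The paper's actual device is different and is worth internalising. Each word $\ww_\kk$ of the rewriting sequence is enriched to a word $\wwh_\kk$ over $\SS\amalg\overline\SS$ by declaring each letter \emph{horizontal} or \emph{vertical}, so that $\wwh_\kk$ traces a path in the triangular grid of Figure~\ref{F:Grid}; the maintained invariant is that every length-two factor of $\wwh_\kk$ beginning with a horizontal letter projects to an $\nm$-normal word. This invariant immediately excludes rewriting steps at HV or HH junctions, so every step is either a \emph{VH-step} or a \emph{VV-step}, and the domino rule is what shows the invariant survives each such step. The count then splits along the grid, not along word positions: each VH-step pushes the path across one of the $\pp(\pp-1)/2$ squares, so there are at most $\pp(\pp-1)/2$ of them; and the VV-steps occurring in the $\jj$th column (those preceded by exactly $\jj-1$ horizontal letters) are shown to form a rewriting sequence on a word of length at most $\pp-\jj$, giving at most $\FF(\pp-\jj)$ of them by induction. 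This yields $\FF(\pp)\le\pp(\pp-1)/2+\FF(\pp-1)+\pdots+\FF(2)$, which solves to $2^\pp-\pp-1$; note this recursion is not the one you propose, even though both evaluate to the same closed form. To repair your attempt you would need to replace the position-based split by the direction labels and the column-based partition of VV-steps, and to verify the invariant case by case using the domino rule --- at which point you have reconstructed the paper's proof rather than an alternative to it.
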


Proposition~\ref{P:43Terminating} subsumes Proposition~\ref{P:Termin33}. But its proof resorts to different arguments, since Krammer's monoid~$\MM_\pp$, see~\cite{KraArt}, which should replace here the finite quotient-monoid $\BP\pp{/}{\{\sigg\ii2 = \sig\ii \mid 1 \le \ii < \pp\}}$, with $121 = 212$ substituted with $121 = 2121$, is infinite. Instead, we analyse $\nm$-normalisation directly to show that no infinite rewriting sequence may exist because one inevitably proceeds to the normal form.

\begin{proof}
Let $\FF(\pp)$ denote the maximal length of sequences $\ww_0\fl_\RR\ww_1\fl_\RR\pdots\fl_\RR\ww_{\ell}$ of $\SS$-words of length~$\pp$, possibly~$\infty$. We prove the inequality $\FF(\pp) \le 2^\pp - \pp - 1$ using induction on $\pp \ge 2$. For $\pp = 2$, the inequality $\FF(\pp) \le 1$ holds, since~$\nm$ is idempotent. We now assume $\pp \ge 3$ and consider a sequence $\www = (\ww_0\wdots\ww_{\ell})$ of length-$\pp$ words satisfying $\ww_{\kk}\fl_\RR\ww_{\kk+1}$ for $0\le\kk<\ell$. We shall distinguish several types of rewriting steps in the sequence~$\www$, in connection with Proposition~\ref{P:NormClass3} and the triangular grid diagram of Figure~\ref{F:Grid}. The latter corresponds to an optimal strategy, which needs not be the case for~$\www$, but we shall explain how to enrich each word~$\ww_\kk$ into a word~$\wwh_\kk$ by attaching with each letter of~$\ww$ a direction, either horizontal or vertical. We define $\widehat{\SS}$ as~$\SS \amalg \overline{\SS}$, where~$\overline\SS$ is a copy of~$\SS$ with an element~$\overline\ss$ for each~$\ss$ in~$\SS$, and we take the convention that~$\ss$ means ``vertical~$\ss$'' and~$\overline\ss$ means ``horizontal~$\ss$'': this associates with every $\widehat\SS$-word~$\widehat\ww$ a path in a triangular grid by starting from the top-left corner and attaching to the successive letters of~$\widehat\ww$ horizontal left-to-right edges and vertical top-to-down edges. 

We construct the $\widehat{\SS}$-words~$\wwh_\kk$ inductively, in such a way that
\begin{equation}\label{E:Cond}
\text{for every length-two factor $\overline\ss \sep \tt$ or $\overline\ss \sep \overline\tt$ of~$\wwh_\kk$, the $\SS$-word $\ss\sep\tt$ is $\nm$-normal.}
\end{equation}
First, for $\ww_0 = \ss_1 \sep \pdots \sep \ss_\pp$, we put $\wwh_0 = \ss_1 \sep \pdots \sep \ss_{\pp-1} \sep \overline{\ss_\pp}$. Then $\wwh_0$ satisfies~\eqref{E:Cond} by default, and (the path associated with)~$\wwh_0$ consists of~$\ww_0$ drawn vertically, except the last letter, which is drawn horizontally. 

Assume that~$\wwh_{\kk-1}$ has been defined, it satisfies~\eqref{E:Cond}, and $\ww_\kk=\nmr_{\!\ii}(\ww_{\kk-1})$ holds. Let~$\ss$ and~$\tt$ be the letters of~$\ww_{\kk-1}$ in positions~$\ii$ and~$\ii+1$, and $\ss' \sep \tt' = \nm(\ss \sep \tt)$. We look at the directions of the letters of~$\wwh_{\kk-1}$ in positions~$\ii$ and~$\ii+1$. The assumption $\ww_{\kk-1}\fl_\RR\ww_{\kk}$ implies that $\ss \sep \tt$ is not $\nm$-normal. By~\eqref{E:Cond}, this excludes the directions $\overline\ss \sep \tt$ and $\overline\ss \sep \overline\tt$. So only two cases are possible.

In the case of a \emph{VH-step}, meaning a vertical letter followed by a horizontal one, we define~$\wwh_\kk$ to be obtained from~$\wwh_{\kk-1}$ by replacing, at position~$\ii$, the factor $\ss \sep \overline\tt$ with $\overline{\ss'} \sep \tt'$ when $\tt$ is not the last letter of~$\ww_{\kk-1}$, and by $\overline{\ss'} \sep \overline{\tt'}$ otherwise, which corresponds to replacing 
\VR(5,3)\begin{picture}(7,0)(-1,2)
\pcline{->}(0,6)(0,0)\put(0.5,3.5){$\ss$}
\pcline{->}(0,0)(6,0)\put(3,1){$\tt$}
\end{picture}
with 
\VR(5,3)\begin{picture}(10,0)(-1,2)
\pcline{->}(0,6)(6,6)\put(0.5,3){$\ss'$}
\pcline{->}(6,6)(6,0)\put(6.5,2.5){$\tt'$}
\psarc[style=thin](6,6){2}{180}{270}
\end{picture} or
\VR(4,0)\begin{picture}(14,0)(-1,0)
\pcline{->}(0,0)(6,0)\put(1,1){$\ss'$}
\pcline{->}(6,0)(12,0)\put(9,1){$\tt'$}
\psarc[style=thin](5.5,0){1.5}{0}{180}
\end{picture} respectively. In both cases, the length-two factor of~$\ww_{\kk}$ starting at position~$\ii$ is thus $\nm$-normal, so~\eqref{E:Cond} is satisfied at this position. The only other position where~\eqref{E:Cond} might fail is~$\ii-1$, when the corresponding letter of~$\wwh_{\kk}$ is horizontal, since, otherwise, \eqref{E:Cond} requires nothing on the factor. Now, going from~$\wwh_{\kk-1}$ to~$\wwh_{\kk}$ replaces 
\VR(5,2)\begin{picture}(14,0)(-1,2)
\pcline{->}(0,6)(6,6)
\pcline{->}(6,6)(6,0)
\pcline{->}(6,0)(12,0)
\end{picture}
with 
\VR(5,4)\begin{picture}(15,0)(-1,2)
\pcline{->}(0,6)(6,6)
\pcline[style=exist]{->}(6,6)(6,0)
\pcline[style=exist]{->}(6,0)(12,0)
\pcline{->}(6,6)(12,6)
\pcline{->}(12,6)(12,0)
\psarc[style=thin](12,6){2}{180}{270}
\end{picture}. 
But, by construction, the pattern 
\VR(5,4)\begin{picture}(14,0)(-1,2)
\pcline{->}(0,6)(6,6)
\pcline{->}(6,6)(6,0)
\pcline{->}(6,0)(12,0)
\end{picture}
necessarily comes from an earlier diagram
\VR(5,4)\begin{picture}(13,0)(-1,2)
\pcline{->}(0,6)(6,6)
\pcline{->}(0,6)(0,0)
\pcline{->}(0,0)(6,0)
\pcline{->}(6,6)(6,0)
\pcline{->}(6,0)(12,0)
\psarc[style=thin](6,6){2}{180}{270}
\psarc[style=thin](5.5,0){1.5}{180}{360}
\end{picture}, 
in which the pairs indicated with small arcs are $\nm$-normal by induction hypothesis. Hence, going to~$\ww_\kk$ means going to
\VR(6,4)\begin{picture}(14,0)(-1,2)
\pcline{->}(0,6)(6,6)
\pcline{->}(0,6)(0,0)
\pcline{->}(0,0)(6,0)
\pcline{->}(6,6)(6,0)
\pcline{->}(6,0)(12,0)
\pcline{->}(6,6)(12,6)
\pcline{->}(12,6)(12,0)
\psarc[style=thin](6,6){2}{180}{270}
\psarc[style=thin](5.5,0){1.5}{180}{360}
\psarc[style=thin](12,6){2}{180}{270}
\psarc[style=thinexist](5.5,6){1.5}{0}{180}
\end{picture}
and the domino rule precisely implies that the top two horizontal edges form an $\nm$-normal word. So $\wwh_\kk$ satisfies~\eqref{E:Cond}.

In the case of a \emph{VV-step} (two vertical letters), we define $\wwh_\kk$ to be obtained from~$\wwh_{\kk-1}$ by replacing, at position~$\ii$, the factor $\ss \sep \tt$ with $\ss' \sep \tt'$. As the shape of~$\wwh_\kk$ is the same as the one of~$\wwh_{\kk-1}$, we only have to check~\eqref{E:Cond} for the length-two factor at position~$\ii-1$, and only when its first letter is horizontal, that is, one goes from
\VR(9,7)\begin{picture}(11,0)(-1,5)
\pcline{->}(0,12)(6,12)
\put(2,9.6){$\rr$}
\pcline{->}(6,12)(6,6)\trput{$\ss$}
\pcline{->}(6,6)(6,0)\trput{$\tt$}
\end{picture}
to
\VR(9,7)\begin{picture}(10,0)(0,5)
\pcline{->}(0,12)(6,12)
\put(2,9.6){$\rr$}
\pcline{->}(6,12)(6,6)\trput{$\ss'$}
\pcline{->}(6,6)(6,0)\trput{$\tt'$}
\psarc[style=thin](6,6.5){1.5}{270}{90}
\end{picture}.
But, by construction, the original pattern in~$\wwh_{\kk-1}$ arises from an earlier pattern
\VR(6,7)\begin{picture}(10,0)(-1,5)
\put(2,9.6){$\rr$}
\pcline{->}(0,12)(6,12)
\pcline{->}(0,6)(6,6)
\pcline{->}(0,0)(6,0)
\pcline{->}(0,12)(0,6)
\pcline{->}(0,6)(0,0)
\pcline{->}(6,12)(6,6)\trput{$\ss$}
\pcline{->}(6,6)(6,0)\trput{$\tt$}
\psarc[style=thin](6,12){2}{180}{270}
\psarc[style=thin](6,6){2}{180}{270}
\end{picture},
so that, when $\ss \sep \tt$ is replaced by $\ss' \sep \tt'$, the domino rule implies that $\rr \sep \ss'$ is $\nm$-normal, as the diagram 
\VR(6,6)\begin{picture}(15,0)(-1,5)
\pcline{->}(0,12)(6,12)\taput{$\rr$}
\pcline{->}(0,6)(6,6)
\pcline{->}(0,0)(6,0)
\pcline{->}(0,12)(0,6)
\pcline{->}(0,6)(0,0)
\pcline{->}(6,12)(6,6)\trput{$\ss$}
\pcline{->}(6,6)(6,0)\trput{$\tt$}
\psarc[style=thin](6,12){2}{180}{270}
\psarc[style=thin](5.5,6){1.5}{180}{360}
\pcline{->}(6,12)(12,12)\taput{$\ss'$}
\pcline{->}(6,6)(12,6)
\pcline{->}(12,12)(12,6)\trput{$\tt'$}
\psarc[style=thin](12,12){2}{180}{270}
\psline[style=double](6,0)(9,0)
\psarc[style=double](9,3){3}{270}{360}
\psline[style=double](12,3)(12,6)
\psarc[style=thinexist](5.5,12){1.5}{0}{180}
\end{picture}
witnesses.

The construction of $\wwh_0 \wdots \wwh_\ell$ is complete, and we now count how many VH-steps and VV-steps can occur in~$\www$. First, each $\widehat{\SS}$-word~$\wwh_\kk$ is associated with a path in the triangular grid diagram of Figure~\ref{F:Grid} and each VH-step causes this path to cross one square in the grid. As the latter contains~$\pp(\pp-1)/2$ squares, we deduce that there are at most $\pp(\pp-1)/2$ VH-steps in~$\www$.

We turn to VV-steps, partitioning them into several subtypes according to where they occur: we say that a VV-step is a $\VVV\jj$-step if it is located on the~$\jj$th column, that is, it replaces a vertical factor~$\ss\sep\tt$ of~$\wwh_\kk$ that is preceded by~$\jj-1$ horizontal letters. Now we fix~$\jj$ with $1 \le \jj < \pp$ and count the $\VVV\jj$-steps that can occur in~$\www$. For $1 \le \ii \le \pp-\jj$, let~$\ss_{\ii,\kk}$ be the letter that vertically occurs at the~$\ii$th position in the $\jj$th column in~$\wwh_\kk$, if it exists. For a given value of~$\ii$, define~$\ssm\ii$ to be~$\ss_{\ii,\kk}$ where~$\kk$ is minimal such that~$\ss_{\ii,\kk}$ exists (if any) and, symmetrically, let~$\ssp\ii$ be~$\ss_{\ii,\kk}$ where~$\kk$ is maximal such that~$\ss_{\ii,\kk}$ exists (if any). For each~$\kk$, if $\ssm\ii$ is defined for $\aa \le \ii $, if~$\ss_{\ii,\kk}$ is defined for $\bb \le \ii \le \cc$, and if~$\ssp\ii$ is defined for $\ii \le \dd$, we put 
$$\vv_{\kk} = \ssm\aa \sep\pdots\sep \ssm{\bb-1} \sep \ss_{\bb,\kk} \sep\pdots\sep \ss_{\cc,\kk} \sep \ssp{\cc+1} \sep\pdots\sep \ssp\dd.$$
So~$\vv_\kk$ is the factor of~$\ww_\kk$ forming the~$\jj$th column of~$\wwh_\kk$, preceded by the letters that are the first to appear in positions~$\aa\wdots\bb-1$ in column~$\jj$ in~$\wwh_{\kk+1}\wdots\wwh_{\ell}$, and followed by the last letters to appear in positions~$\cc+1\wdots\dd$ in column~$\jj$ in~$\wwh_0\wdots\wwh_{\kk-1}$. If one goes from~$\ww_{\kk-1}$ to~$\ww_\kk$ by a VH-step or a $\VVV{\jj'}$-step with $\jj' \not= \jj$, then we have $\vv_{\kk}=\vv_{\kk-1}$: indeed, either the~$\jj$th columns of~$\wwh_{\kk-1}$ and~$\wwh_{\kk}$ are equal, or a VH-step normalises the last letter~$\ss_{\cc,\kk-1}$ of the~$\jj$th column of~$\wwh_{\kk-1}$ with the subsequent horizontal letter, or the first letter~$\ss_{\bb,\kk-1}$ of the~$\jj$th column of~$\wwh_{\kk-1}$ with the previous horizontal letter; and, in the case of the last letter (the other one being symmetric), $\vv_{\kk}$ is~$\vv_{\kk-1}$ with~$\ss_{\cc,\kk}$ replaced by~$\ssp\cc$, hence unchanged by definition of~$\ssp\cc$. Otherwise, if one goes from~$\ww_{\kk-1}$ to~$\ww_\kk$ by a $\VVV\jj$-step, then $\vv_{\kk-1} \to_{\RR} \vv_{\kk}$ holds. As, by construction, the length of the $\SS$-word~$\vv_{\kk}$ is at most $\pp - \jj$, we conclude that the number of $\VVV\jj$-steps in~$\www$ is at most $\FF(\pp - \jj)$. Summing up, we deduce
\begin{equation}\label{E:Function}
\FF(\pp) \le \frac{\pp(\pp-1)}2 + \FF(\pp-1) + \pdots + \FF(3) + \FF(2),
\end{equation}
which solves into $\FF(\pp) \le 2^\pp + \pp-1$ owing to $\FF(\qq)\le 2^{\qq}-\qq-1$ that holds for every $2\le\qq<\pp$ by induction hypothesis.

Finally, as in the case of class~$(3, 3)$, Proposition~\ref{P:NeutralQuadRewr} implies that, if~$(\SS,\RR)$ terminates and~$\ee$ is an $\nm$-neutral element in~$\SS$, then so does~$(\SS_\ee,\RR_\ee)$.
\end{proof}

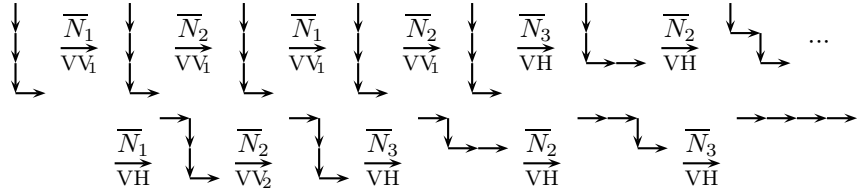
\begin{figure}[htb]
\begin{picture}(14,10)(0,0)
\pcline{->}(0,12)(0,8)
\pcline{->}(0,8)(0,4)
\pcline{->}(0,4)(0,0)
\pcline{->}(0,0)(4,0)
\pcline{->}(6,6)(11,6)\taput{$\nmr_{\!1}$}\tbput{\footnotesize$\VVV1$}
\end{picture}
\begin{picture}(14,10)(0,0)
\pcline{->}(0,12)(0,8)
\pcline{->}(0,8)(0,4)
\pcline{->}(0,4)(0,0)
\pcline{->}(0,0)(4,0)
\pcline{->}(6,6)(11,6)\taput{$\nmr_{\!2}$}\tbput{\footnotesize$\VVV1$}
\end{picture}
\begin{picture}(14,10)(0,0)
\pcline{->}(0,12)(0,8)
\pcline{->}(0,8)(0,4)
\pcline{->}(0,4)(0,0)
\pcline{->}(0,0)(4,0)
\pcline{->}(6,6)(11,6)\taput{$\nmr_{\!1}$}\tbput{\footnotesize$\VVV1$}
\end{picture}
\begin{picture}(14,10)(0,0)
\pcline{->}(0,12)(0,8)
\pcline{->}(0,8)(0,4)
\pcline{->}(0,4)(0,0)
\pcline{->}(0,0)(4,0)
\pcline{->}(6,6)(11,6)\taput{$\nmr_{\!2}$}\tbput{\footnotesize$\VVV1$}
\end{picture}
\begin{picture}(14,10)(0,0)
\pcline{->}(0,12)(0,8)
\pcline{->}(0,8)(0,4)
\pcline{->}(0,4)(0,0)
\pcline{->}(0,0)(4,0)
\pcline{->}(6,6)(11,6)\taput{$\nmr_{\!3}$}\tbput{\footnotesize VH}
\end{picture}
\begin{picture}(18,10)(0,0)
\pcline{->}(0,12)(0,8)
\pcline{->}(0,8)(0,4)
\pcline{->}(0,4)(4,4)
\pcline{->}(4,4)(8,4)
\pcline{->}(10,6)(15,6)\taput{$\nmr_{\!2}$}\tbput{\footnotesize VH}
\end{picture}
\begin{picture}(20,10)(0,0)
\pcline{->}(0,12)(0,8)
\pcline{->}(0,8)(4,8)
\pcline{->}(4,8)(4,4)
\pcline{->}(4,4)(8,4)
\put(10,6){$\pdots$}
\end{picture}\\
\begin{picture}(16,11)(0,4)
\pcline{->}(-6,6)(-1,6)\taput{$\nmr_{\!1}$}\tbput{\footnotesize VH}
\pcline{->}(0,12)(4,12)
\pcline{->}(4,12)(4,8)
\pcline{->}(4,8)(4,4)
\pcline{->}(4,4)(8,4)
\pcline{->}(10,6)(15,6)\taput{$\nmr_{\!2}$}\tbput{\footnotesize $\VVV2$}
\end{picture}
\begin{picture}(16,11)(0,4)
\pcline{->}(0,12)(4,12)
\pcline{->}(4,12)(4,8)
\pcline{->}(4,8)(4,4)
\pcline{->}(4,4)(8,4)
\pcline{->}(10,6)(15,6)\taput{$\nmr_{\!3}$}\tbput{\footnotesize VH}
\end{picture}
\begin{picture}(20,11)(0,4)
\pcline{->}(0,12)(4,12)
\pcline{->}(4,12)(4,8)
\pcline{->}(4,8)(8,8)
\pcline{->}(8,8)(12,8)
\pcline{->}(14,6)(19,6)\taput{$\nmr_{\!2}$}\tbput{\footnotesize VH}
\end{picture}
\begin{picture}(20,11)(0,4)
\pcline{->}(0,12)(4,12)
\pcline{->}(4,12)(8,12)
\pcline{->}(8,12)(8,8)
\pcline{->}(8,8)(12,8)
\pcline{->}(14,6)(19,6)\taput{$\nmr_{\!3}$}\tbput{\footnotesize VH}
\end{picture}
\begin{picture}(0,11)(0,4)
\pcline{->}(0,12)(4,12)
\pcline{->}(4,12)(8,12)
\pcline{->}(8,12)(12,12)
\pcline{->}(12,12)(16,12)
\end{picture}\caption[]{\sf\small Types of the successive steps in the computation of $\nmr_{\!12123212323}(\ww)$ for~$\ww$ of length~$4$: in addition to the six VH-steps, which inexorably approach~$\nm(\ww)$, we find four $\VVV1$-steps and one $\VVV2$-steps; this turns out to be the only possible length-$11$ sequence for length-$4$ words.}
\label{F:Steps}
\end{figure}

\begin{rema*}
In the previous proof, one can observe that the number of VV-steps between two VH-steps is bounded above by~$\FF(\pp-1)$, since columns in the grid have length at most $\pp-1$, and deduce $\FF(\pp) \le \pp(\pp-1)/2 + (\pp(\pp-1)/2+1)\FF(\pp-1)$, which is coarser than~\eqref{E:Function} but sufficient to inductively prove termination.
\end{rema*}

The following result, formulated purely in terms of rewriting systems, is an immediate consequence of Proposition~\ref{P:43Terminating}. 

\begin{coro}\label{P:43Convergent}
Assume that~$(\SS,\RR)$ is a reduced quadratic rewriting system. Define~$\ft : \Pow\SS2 \to \Pow\SS2$ by $\ft(\ww) = \ww'$ for $\ww\fl\ww'$ in~$\RR$ and $\ft(\ww) = \ww$ otherwise.

\ITEM1 Assume that, for all $\rr,\ss,\tt$ in~$\SS$, with $\rr\sep\ss$ not $\RR$-normal, if $\ss\sep\tt$ is $\RR$-normal then $\ft_{12}(\rr\sep\ss\sep\tt)$ is $\RR$-normal, and if $\ss\sep\tt$ is not $\RR$-normal then $\ft_{1212}(\rr\sep\ss\sep\tt)=\ft_{212}(\rr\sep\ss\sep\tt)$. Then~$(\SS,\RR)$ is convergent.

\ITEM2 If $\SS$ contains a $\ft$-neutral element~$\ee$ and the conditions of~\ITEM1 are satisfied for all~$\rr, \ss, \tt$ in~$\SS_\ee$, then~$(\SS,\RR)$ and~$(\SS_{\ee},\RR_{\ee})$ are convergent, where~ $\RR_{\ee}$ consists of one rule $\ww\fl\pi_{\ee}(\ww')$ for each $\ww\fl\ww'$ in~$\RR$ with $\ww\in\SS_{\ee}^*$.
\end{coro}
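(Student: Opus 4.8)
The plan is to show that the two hypotheses of~\ITEM1 are a repackaging of the relation~\eqref{E:Axiom}, so that the machinery already in place applies. Since $(\SS,\RR)$ is reduced, every rule $\ww\fl\ww'$ has $\RR$-normal target, hence $\ft$ is idempotent. Granting that $\ft$ satisfies $\ft_{212}=\ft_{2121}=\ft_{1212}$, Proposition~\ref{P:Axiom} provides a quadratic normalisation~$(\SS,\nm)$ of class~$(4,3)$ with $\nmr=\ft$; the rewriting system~\eqref{E:QuadRewrOfNorm} attached to~$(\SS,\nm)$ is then literally~$\RR$, because a length-two word~$\ss\sep\tt$ is non-$\nm$-normal exactly when $\ss\sep\tt\fl\ft(\ss\sep\tt)$ lies in~$\RR$; and Proposition~\ref{P:43Terminating} gives that~$(\SS,\RR)$ is convergent, which is~\ITEM1. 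So everything reduces to establishing~\eqref{E:Axiom}, which I split into two claims.

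\emph{Claim~1: for every $\rr\sep\ss\sep\tt$ in~$\Pow\SS3$, the word $\ft_{212}(\rr\sep\ss\sep\tt)$ is $\RR$-normal.} Writing $\ft(\ss\sep\tt)=\ss_1\sep\tt_1$, which is $\RR$-normal, one has $\ft_{212}(\rr\sep\ss\sep\tt)=\ft_{12}(\rr\sep\ss_1\sep\tt_1)$. If $\rr\sep\ss_1$ is $\RR$-normal, then $\ft_{12}$ fixes the $\RR$-normal word $\rr\sep\ss_1\sep\tt_1$; if $\rr\sep\ss_1$ is not $\RR$-normal, then the first hypothesis of~\ITEM1, applied to $\rr,\ss_1,\tt_1$ and noting that $\ss_1\sep\tt_1$ is $\RR$-normal, gives that $\ft_{12}(\rr\sep\ss_1\sep\tt_1)$ is $\RR$-normal. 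This first hypothesis is precisely the domino rule of Definition~\ref{D:Domino} for~$\ft$ (the case $\rr\sep\ss$ $\RR$-normal being vacuous). Either way Claim~1 holds, and since an $\RR$-normal word is fixed by~$\ft_1$, the identity $\ft_{2121}=\ft_1\comp\ft_{212}$ yields $\ft_{2121}=\ft_{212}$.

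\emph{Claim~2: $\ft_{1212}=\ft_{212}$ on~$\Pow\SS3$.} I argue by cases. If $\rr\sep\ss$ is $\RR$-normal, then $\ft_1$ fixes $\rr\sep\ss\sep\tt$ and, via $\ft_{1212}=\ft_{212}\comp\ft_1$, one gets $\ft_{1212}(\rr\sep\ss\sep\tt)=\ft_{212}(\rr\sep\ss\sep\tt)$. If $\rr\sep\ss$ is not $\RR$-normal but $\ss\sep\tt$ is, then $\ft_2$ fixes $\rr\sep\ss\sep\tt$, so $\ft_{212}(\rr\sep\ss\sep\tt)=\ft_{12}(\rr\sep\ss\sep\tt)$, which is $\RR$-normal by the first hypothesis; since $\ft_{1212}=\ft_{12}\comp\ft_{12}$ and $\ft_{12}$ fixes $\RR$-normal words, this gives $\ft_{1212}(\rr\sep\ss\sep\tt)=\ft_{12}(\rr\sep\ss\sep\tt)=\ft_{212}(\rr\sep\ss\sep\tt)$. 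Finally, if neither $\rr\sep\ss$ nor $\ss\sep\tt$ is $\RR$-normal, the equality $\ft_{1212}(\rr\sep\ss\sep\tt)=\ft_{212}(\rr\sep\ss\sep\tt)$ is exactly the second hypothesis of~\ITEM1. Combining Claims~1 and~2 establishes~\eqref{E:Axiom}.

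For~\ITEM2, a $\ft$-neutral~$\ee$ means $\ft(\ee\sep\ss)=\ft(\ss\sep\ee)=\ss\sep\ee$ for all~$\ss$, and this lets one verify~\eqref{E:Axiom} on the words of~$\Pow\SS3$ containing~$\ee$ by direct computation: each occurrence of~$\ee$ is merely transported to the right, so that $\ft_{212}$, $\ft_{2121}$, $\ft_{1212}$ all collapse to the $\SSm$-case already treated. Proposition~\ref{P:Axiom} then yields a class-$(4,3)$ normalisation~$(\SS,\nm)$ with $\nmr=\ft$, and $\ee$ is $\nm$-neutral by Proposition~\ref{P:QuadPres}~\ITEM2; the neutral clause of Proposition~\ref{P:43Terminating} delivers convergence of both~$(\SS,\RR)$ and~$(\SSm,\RR_\ee)$, the latter coinciding with the system of Proposition~\ref{P:NeutralQuadRewr} because $\pi_\ee(\ft(\ss\sep\tt))=\pi_\ee(\nmr(\ss\sep\tt))$. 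The main obstacle is the bookkeeping in the two claims: one must recognise that the two apparently ad hoc hypotheses encode exactly the domino rule (Claim~1 and the middle case of Claim~2) together with the single genuinely independent instance $\ft_{1212}=\ft_{212}$ in the doubly non-normal case, and one must track carefully which composites of~$\ft_1,\ft_2$ fix an $\RR$-normal word.
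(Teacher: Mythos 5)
Your proof is correct and takes exactly the route the paper intends: the paper gives no written argument beyond calling the corollary ``an immediate consequence of Proposition~\ref{P:43Terminating}'', and your verification that the two hypotheses of~\ITEM1, together with reducedness, yield idempotency of~$\ft$ and the relations $\ft_{212}=\ft_{2121}=\ft_{1212}$ (so that Proposition~\ref{P:Axiom} produces a class-$(4,3)$ normalisation whose associated rewriting system is~$\RR$, and Proposition~\ref{P:43Terminating} applies) is precisely the missing bookkeeping. The direct check of~\eqref{E:Axiom} on words containing~$\ee$ and the appeal to Proposition~\ref{P:QuadPres}~\ITEM2 for~\ITEM2 are likewise sound.
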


\subsection{Termination in higher classes}\label{SS:Termin44}

We show that Proposition~\ref{P:43Terminating} is optimal: from class $(4, 4)$ onwards, no general termination result can be established, since both nonterminating and terminating rewriting systems may arise.

\begin{prop}\label{P:Termin44}
There exists a quadratic normalisation of class~$(4, 4)$ such that the associated rewriting system is not convergent.
\end{prop}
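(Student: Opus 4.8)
The plan is to exhibit one explicit finite example. Because Proposition~\ref{P:43Terminating} shows that every quadratic normalisation of class~$(4,3)$ or~$(3,4)$ is convergent, any counterexample must have \emph{minimal} class exactly~$(4,4)$: its restriction~$\nmr$ must satisfy the class-$(4,4)$ identity $\nmr_{1212}=\nmr_{2121}$ of~\eqref{E:Class2} while failing both $\nmr_{212}=\nmr_{2121}$ (otherwise it is of right-class~$3$) and $\nmr_{121}=\nmr_{1212}$ (otherwise of left-class~$3$) on at least one length-three word each; equivalently, by Lemma~\ref{L:Domino}, the domino rule must fail. So the first task is to design a map~$\ft=\nmr$ on~$\Pow\SS2$, over a small finite set~$\SS$, obeying these three constraints, and then to build from it a genuine quadratic normalisation~$(\SS,\nm)$.

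The delicate point is confluence. Since we want the associated rewriting system to admit an infinite reduction — necessarily cyclic, as $\SS$ is finite and $\nm$ is length-preserving — the usual route to confluence, namely Newman's lemma (local confluence plus termination), is unavailable, termination being exactly what we are trying to defeat. The cleanest way around this is to produce the normalisation not from the rules but from a genuine idempotent map. Concretely, I would fix the monoid~$\MM$ presented by $\{\ss\sep\tt=\ft(\ss\sep\tt)\}$ together with a geodesic normal form~$\NF$ on~$(\MM,\SS)$, and set $\nm=\NF\comp\ev$ as in Proposition~\ref{P:NF} (inserting a dummy generator through Proposition~\ref{P:GenNF} should $\MM$ fail to be graded). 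Then $(\SS,\nm)$ is automatically a normalisation, and once one checks the two quadraticity conditions~\eqref{E:Quad1}--\eqref{E:Quad2} — normality detected by length-two factors, and reachability of~$\nm(\ww)$ by length-two steps — Proposition~\ref{P:QuadNormRewr}~\ITEM1 returns a confluent, normalising, reduced quadratic rewriting system for free, with $\nmr=\ft$. This sidesteps any direct confluence computation.

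With the normalisation in hand, the remaining steps are finite verifications. First, a case check on~$\Pow\SS3$ confirms that $\nmr_{1212}=\nmr_{2121}$ holds identically, so $(\SS,\nm)$ is of class~$(4,4)$ by Proposition~\ref{P:Class}, while isolating one word on which three left steps fail and one on which three right steps fail, pinning the minimal class at exactly~$(4,4)$. Second — the heart of the matter — I exhibit a single length-four word~$\ww$ together with a sequence of positions $\ii_1,\dots,\ii_k$ in~$\{1,2,3\}$ such that $\nmr_{\ii_k}\comp\cdots\comp\nmr_{\ii_1}(\ww)=\ww$, every intermediate step genuinely rewriting (each touched factor being non-$\nm$-normal). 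Iterating this cycle produces an infinite rewriting sequence, so $(\SS,\RR)$ is not convergent.

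The main obstacle is the combinatorial engineering of the example, since three competing demands must hold at once: $\ft$ should normalise length-three words from either side in exactly four steps (class~$(4,4)$); the resulting~$\nm$ should be genuinely quadratic (so that normality is a purely local, length-two property and the rules present a confluent system); and yet some length-four word should admit a cyclic reduction. The loop is arranged so that normalising one length-two factor spoils a neighbouring factor and this defect chases itself around the word without ever terminating — concretely, by tuning the rules on a handful of indexed generators so that an alteration at positions~$1$-$2$ forces one at~$2$-$3$, then at~$3$-$4$, which feeds back to~$2$-$3$ and~$1$-$2$, closing a cycle. Reconciling the existence of this cycle with global confluence is precisely what the ``define $\nm$ first'' strategy secures, at the price of having to display the monoid and its geodesic normal form explicitly.
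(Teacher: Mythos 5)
Your outline correctly identifies everything that a counterexample must satisfy (minimal class exactly $(4,4)$, failure of the domino rule, a cyclic reduction on some short word, and confluence established by some means other than Newman's lemma), and your intended cycle --- a defect at positions $1$-$2$ propagating to $3$-$4$ and back through $2$-$3$ --- is exactly the shape of the cycle the paper uses ($\tta\ttb\ttc\ttd \to \tta\ttb'\ttc\ttd \to \tta\ttb'\ttc'\ttd \to \tta\ttb\ttc\ttd$ over an eight-letter alphabet with five rules). But the proposal stops at the plan: no alphabet, no map~$\ft$, no word, no cycle is actually produced. For an existence statement proved by explicit witness, this is a genuine gap rather than a stylistic shortfall --- the entire mathematical content of the proof is the concrete example together with the verification that it simultaneously meets the three competing demands you list, and you acknowledge this engineering as ``the main obstacle'' without carrying it out. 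Nothing in the proposal guarantees that the three demands are simultaneously satisfiable, which is precisely what the proposition asserts.

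A second, more local problem is the claim that defining~$\nm$ first ``sidesteps any direct confluence computation.'' To apply Proposition~\ref{P:NF} or~\ref{P:GenNF} you must first produce a well-defined geodesic normal form on the monoid presented by $\{\ss\sep\tt=\ft(\ss\sep\tt)\}$, which already requires understanding its congruence classes; and you must then verify~\eqref{E:Quad2}, i.e.\ that $\nm(\ww)$ is reachable from~$\ww$ by applications of~$\nmr$ to length-two factors, which is exactly the normalising property of the rewriting system. So the work is relocated, not avoided. The paper instead argues directly on the rules: it shows that every rule acts inside a ``special'' factor of a canonical decomposition of any word, reducing normalisation and confluence to an exhaustive inspection of finitely many reduction graphs (one per special type), each of which has a unique normal form reachable from every vertex. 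Some such finite verification is unavoidable, and your write-up contains neither it nor the data it would be performed on.
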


\begin{proof}
Let $\SS = \{\tta, \ttb, \ttb', \ttb'', \ttc, \ttc', \ttc'', \ttd\}$ and let $\RR$ consist of the five rules $\tta\ttb \to \tta\ttb'$, $\ttb'\ttc' \to \ttb\ttc$, $\ttb\ttc' \to \ttb''\ttc''$, $\ttb'\ttc \to \ttb''\ttc''$, and $\ttc\ttd \to \ttc'\ttd$. We claim that the rewriting system~$(\SS, \RR)$, which is quadratic by definition, is normalising and confluent. However $(\SS, \RR)$ is not terminating, as it admits the length-$3$ cycle 
\[
\underline{\tta\ttb}\ttc\ttd \to \tta\ttb'\underline{\ttc\ttd} \to \tta\underline{\ttb'\ttc'}\ttd \to \tta\ttb\ttc\ttd.
\]
We prove that $(\SS, \RR)$ is normalising using an exhaustive description of the rewriting sequences starting from an arbitrary $\SS$-word. Let~$\ff$ be the accent-forgetting map $\tta \mapsto \tta$; $\ttb, \ttb', \ttb'' \mapsto \ttb$; $\ttc, \ttc', \ttc'' \mapsto \ttc$; $\ttd \mapsto \ttd$. For~$\uu$ a nonempty factor of~$\tta\ttb\ttc\ttd$, we say that an $\SS$-word~$\ww$ is \emph{special of type~$\uu$} if $\ff(\ww) = \uu$ holds. For~$\ww$ in~$\SS^*$, inductively define a decomposition~$\DD(\ww)$ by $\DD(\varepsilon) = \varepsilon$ and, if
$\DD(\ww) = \ww_1 \sep \pdots \sep \ww_\mm$ and~$\ss \in \SS$ hold,
$\DD(\ww\ss) = \ww_1 \sep \pdots \sep \ww_\mm\ss$ if $\ww_\mm\ss$ is special, and $\DD(\ww\ss) = \ww_1 \sep \pdots \sep \ww_\mm \sep \ss$
otherwise: $\DD(\ww)$ is obtained by grouping the special factors of~$\ww$ as much as possible. For instance, we find
$\DD(\tta\ttb'\ttd\ttb''\ttc\ttd\ttb'\tta\ttb') = \tta\ttb'\sep\ttd\sep\ttb''\ttc\ttd\sep\ttb'\sep\tta\ttb'$. Now, we observe that~$\pi$ is compatible with all rules of~$\RR$ and, moreover, every rule acts inside a special factor. Hence, if we have $\DD(\ww) =
\ww_1 \sep \pdots \sep \ww_\mm$, then the words~$\ww'$ for which $\ww \to_\RR \ww'$ holds are those words~$\ww'$ satisfying $\DD(\ww') =
\ww'_1 \sep \pdots \sep \ww'_\mm$ with $\ww_\ii \to_\RR \ww'_\ii$ for
each~$\ii$. Consequently, in order to prove that $(\SS, \RR)$ is normalising and confluent, it suffices to prove it for the factors of the $\DD$-decomposition, that is, for special words. We review the ten types. First, an $\SS$-word~$\ww$ of type~$\tta$, $\ttb$, $\ttc$, $\ttd$, $\tta\ttb$, $\ttb\ttc$, or $\ttc\ttd$ is $\RR$-normal form, or we have $\ww\fl_{\RR}\ww'$ for some $\RR$-normal $\SS$-word~$\ww'$. Next, there are nine $\SS$-words of type~$\tta\ttb\ttc$, and the corresponding restriction of~$\fl_{\RR}$ is (where framed $\SS$-words are the $\RR$-normal ones)
\[
\begin{picture}(70,6)(0,0)
\put(0,0){$\tta\ttb\ttc'$}
\psline{->}(8,0.5)(13,0.5)
\put(15,0){$\tta\ttb'\ttc'$}
\psline{->}(23,0.5)(28,0.5)
\put(30,0){$\tta\ttb\ttc$}
\psline{->}(38,0.5)(43,0.5)
\put(45,0){$\tta\ttb'\ttc$}
\psline{->}(53,0.5)(58,0.5)
\put(60,0){$\tta\ttb''\ttc''$}
\psframe[linewidth=0.8pt,framearc=.5](59,-1.5)(70,4)
\psline(3,3)(3,3)
\psarc{<-}(55,1){4}{40}{90}
\psline(8,5)(55,5)
\psarc(8,1){4}{90}{140}
\end{picture}
\]
\[
\begin{picture}(63,6)(0,-1)
\put(0,0){$\tta\ttb\ttc''$}
\psline{->}(8,0.5)(13,0.5)
\put(15,0){$\tta\ttb'\ttc''$}
\psframe[linewidth=0.8pt,framearc=.5](14,-1.5)(24,4)
\put(35,0){$\tta\ttb''\ttc$}
\psframe[linewidth=0.8pt,framearc=.5](34,-1.5)(43,4)
\put(55,0){$\tta\ttb''\ttc'$}
\psframe[linewidth=0.8pt,framearc=.5](54,-1.5)(64,4)
\end{picture}
\]
The graph for~$\ttb\ttc\ttd$ is entirely similar. Finally, for the type $\tta\ttb\ttc\ttd$, we find:
\[
\begin{picture}(110,20)(0,1)
\put(0,8){$\tta\ttb\ttc\ttd$}
\put(15,0.5){$\tta\ttb'\ttc\ttd$}
\put(15,16){$\tta\ttb\ttc'\ttd$}
\put(30,8){$\tta\ttb''\ttc''\ttd$}
\put(50,8){$\tta\ttb'\ttc'\ttd$}
\psframe[linewidth=0.8pt,framearc=.5](29,6.5)(42,11.8)
\psline{->}(8.5,7)(14,3)
\psline{->}(8.5,11)(14,15)
\psline{->}(24,3.5)(29,6)
\psline{->}(24,14.5)(29,12)
\psline{->}(25,2)(49,7)
\psline{->}(25,16)(49,11)
\psline(54,12)(54,13)
\psarc(47,13){7}{0}{90}
\psline(47,20)(11,20)
\psarc(11,13){7}{90}{180}
\psline{->}(4,13)(4,11)
\put(70,2){$\tta\ttb''\ttc\ttd$}
\psline{->}(81,2.5)(87,2.5)
\put(90,2){$\tta\ttb''\ttc'\ttd$}
\psframe[linewidth=0.8pt,framearc=.5](89,0.5)(102,6)
\put(70,14){$\tta\ttb\ttc''\ttd$}
\psline{->}(81,14.5)(87,14.5)
\put(90,14){$\tta\ttb'\ttc''\ttd$}
\psframe[linewidth=0.8pt,framearc=.5](89,12.5)(102,18)
\end{picture}
\]
Thus, for each type, the corresponding connected component of the relation~$\fl_{\RR}$ contains exactly one $\RR$-normal $\SS$-word, which is reachable from any other $\SS$-word of the component. It follows that~$(\SS,\RR)$ is normalising and confluent. Moreover, the inspection of the normalisation of length-three $\SS$-words shows that the normalisation~$(\SS,\nm)$ associated with~$\RR$ is of minimal class~$(4,4)$.
\end{proof}

By contrast, the following example shows that terminating rewriting systems may also arise when the minimal class is at least~$(4,4)$.

\begin{exam}\label{X:Chinois}
For a totally ordered finite set~$\XX$, the \emph{Chinese monoid over~$\XX$} is the monoid~$\CC_{\XX}$ generated by~$\XX$ and submitted to the relations $\ttz\tty\ttx = \ttz\ttx\tty = \tty\ttz\ttx$, for $\ttx\le\tty\le\ttz$~\cite{CassaigneEspieKrobNovelliHivert}. Assume that~$\XX$ has three elements and denote by~$\SS$ the eight-element set obtained from~$\XX$ by adjoining the empty word~$\ee$, the three words $\tty\ttx$ for $\ttx<\tty$, and~$\tty\tty$ if~$\tty$ is the middle element of~$\XX$ (neither the minimal one nor the maximal one). The following twelve rules are derivable from the defining relations of~$\CC_\XX$: the nine rules $\tty\sep\ttx \fl \tty\ttx$, $\tty\sep\tty\ttx \fl \tty\ttx\sep\tty$, $\tty\ttx\sep\ttx \fl \ttx\sep\tty\ttx$ for $\ttx<\tty$; the two rules $\tty\sep\ttz\ttx \fl \ttz\ttx\sep\tty$ and $\ttz\sep\tty\ttx \fl \ttz\ttx\sep\tty$ for $\ttx<\tty<\ttz$; and $\tty\sep\tty \fl \tty\tty$ if~$\tty$ is the middle element of~$\XX$. This reduced rewriting system terminates (using the weighted right-lexicographic order generated by $\ttx<\tty\ttx$ for $\ttx\le\tty$ and $\ttz\ttx<\tty$ for $\ttx<\tty$) and, after application of Knuth-Bendix completion, it yields a convergent rewriting system~$(\SS_{\ee},\RR_{\ee})$ with~$22$ rules presenting~$\CC_\XX$. After homogenisation, we obtain a reduced, quadratic and convergent rewriting system~$(\SS,\RR)$, whose corresponding quadratic normalisation is of class~$(4,4)$, the worst case being reached on $\ttz\sep\tty\tty\sep\tty$ if~$\tty$ is the middle element and~$\ttz>\tty$ holds. Similar convergent quadratic presentations also exist when~$\XX$ has four or five elements (to be compared with the nonquadratic ones of~\cite[Th.~3.3]{CainGrayMalheiro2}), the class being~$(5,4)$ in both cases.
\end{exam}

\section{Garside normalisation}\label{S:Garside}

In this last section, we investigate the connection between our current general framework and Garside families. It turns out that the latter provide natural examples of quadratic normalisations of class~$(4, 3)$ and that, conversely, a normalisation of class~$(4, 3)$ comes from a Garside family if, and only if, it satisfies some explicit additional condition called left-weightedness.

The section is organised as follows. In Subsection~\ref{SS:Greedy}, we briefly recall the basic definitions involving Garside families and the associated normal forms. In Subsection~\ref{SS:LeftWeighted}, we introduce the notion of a left-weighted normalisation and establish the above mentioned connection, which is Theorem~C of the introduction. Finally, in Subsection~\ref{SS:Applic}, we mention a few further consequences. 

\subsection{Greedy decompositions}
\label{SS:Greedy}

Hereafter, if $\MM$ is a left-cancellative monoid, we denote by~$\dive$ the associated \emph{left-divisibility} relation, defined by $\ff\dive\gg$ if $\ff\gg' = \gg$ holds in~$\MM$ for some~$\gg'$. The starting point is the notion of an $\SS$-normal word.

\begin{defi}[{\cite[Def.~III.1.1]{Garside}}]
\label{D:Greedy}
If~$\MM$ is a left-cancellative monoid and~$\SS$ is included in~$\MM$, an $\SS$-word~$\ss_1\sep\ss_2$ is called \emph{$\SS$-normal} if the following condition holds:
\begin{equation}\label{E:Greedy}
\forall\ss{\in}\SS\: \forall\ff{\in}\MM\: (\ss \dive \ff \ss_1\ss_2 \Rightarrow \ss\dive \ff \ss_1).
\end{equation}
An $\SS$-word $\ss_1 \sep \pdots \sep \ss_\pp$ is called \emph{$\SS$-normal} if $\ss_\ii \sep \ss_{\ii+1}$ is $\SS$-normal for every~$\ii$.
\end{defi}

The intuition underlying condition~\eqref{E:Greedy} is that~$\ss_1$ already contains as much of~$\SS$ as it can, a greediness condition; note that we do not only consider the left-divisors of~$\ss_1 \ss_2$ that lie in~$\SS$, but, more generally, all elements of~$\SS$ that left-divide~$\ff \ss_1 \ss_2$.

Then the notion of a Garside family arises naturally. Here we state the definition in a restricted case fitting our current framework (see~\cite{Garside} for the general case):

\begin{defi}\label{D:GarNormal}
Assume that $\MM$ is a monoid with no nontrivial invertible elements and $\SS$ is a subset of~$\MM$ that contains~$1$. We say that $\SS$ is a \emph{Garside family in~$\MM$} if every element~$\gg$ of~$\MM$ has an $\SS$-normal decomposition, that is, there exists an $\SS$-normal $\SS$-word $\ss_1 \sep \pdots \sep \ss_\pp$ satisfying $\ss_1 \pdots \ss_\pp = \gg$.
\end{defi}

\begin{exam}\label{X:Artin}
The seminal example of a Garside family is the family of all simple braids. Let~$B_\nn$ be Artin's $\nn$-strand braid group and~$\BP\nn$ be the submonoid of~$B_\nn$ consisting of all braids that can be represented by a diagram in which all crossings have a positive orientation (see for instance~\cite{Gar} or \cite[Section~I.1]{Garside}). Then the subfamily~$\SS_\nn$ of~$\BP\nn$ consisting of those positive braids that can be represented by a diagram in which any two strands cross at most once is a Garside family in~$\BP{\nn}$.

More generally, if~$\MM$ is an Artin--Tits monoid, that is, a monoid defined by relations of the form 
$\ss\tt\ss\tt... = \tt\ss\tt\ss...$
where both terms have the same length, and if~$\WW$ is the Coxeter group obtained by adding the torsion relations~$\ss^2 = 1$ to the above relations, then~$\MM$ admits a Garside family that is a copy of~$\WW$~\cite{Din}. When~$\WW$ is finite, this Garside family (which consists of the divisors of some element~$\Delta$ connected with the longest element of~$\WW$) is minimal. When~$\WW$ is infinite, it is not minimal, but there exists in every case a finite Garside family~\cite{Din}. For instance, if $\MM$ is the Artin--Tits monoid of (affine) type~$\Att$, that is, $\MM$ admits a presentation with three generators $\sig1, \sig2, \sig3$ and three relations $\sig\ii\sig\jj\sig\ii = \sig\jj\sig\ii\sig\jj$, then the associated Coxeter group is infinite, but $\MM$ admits a finite Garside family~$\SS$ consisting of the sixteen right-divisors of the elements $\sig1\sig2\sig3\sig2$, $\sig2\sig3\sig1\sig3$, and $\sig3\sig1\sig3\sig1$.
\end{exam}

It turns out that a large number of monoids admit interesting Garside families, and many results involving such families, including various practical characterisations, and the derived normalisations are now known~\cite{Garside}.

For our current approach, what counts is that Garside normalisation enters the framework of Sections~\ref{S:Normal} to~\ref{S:Class3}. First, a mild discussion is in order, because the $\SS$-normal form as introduced in Definition~\ref{D:GarNormal} is not readily unique. 

\begin{lemm}\label{L:GarNormal}
Assume that~$\MM$ is a left-cancellative monoid with no nontrivial invertible elements and~$\SS$ is a Garside family in~$\MM$. 

\ITEM1 \cite[Prop.~III.1.25]{Garside} Call two $\SS$-words $\simeq$-equivalent if they only differ by appending final entries~$1$. Then every $\SS$-word that is $\simeq$-equivalent to an $\SS$-normal word is $\SS$-normal; conversely, any two $\SS$-normal decompositions of the same element of~$\MM$ are $\simeq$-equivalent.

\ITEM2 \cite[Prop.~III.1.30]{Garside} Every element of~$\MM$ with a representative in~$\Pow{\SS}{\pp}$ admits an $\SS$-normal decomposition of length at most~$\pp$.
\end{lemm}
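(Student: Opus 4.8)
The plan is to reduce each of the two assertions to a single structural property of the greedy condition~\eqref{E:Greedy}, namely that the first entry of an $\SS$-normal decomposition of~$\gg$ is an \emph{$\SS$-head} of~$\gg$, i.e.\ a $\dive$-largest element of~$\SS$ left-dividing~$\gg$. For~\ITEM1, I would first dispose of the easy half: if $\ss_1 \sep \pdots \sep \ss_\pp$ is $\SS$-normal, then appending a final~$1$ only creates the new length-two factor $\ss_\pp \sep 1$, whose instance of~\eqref{E:Greedy} is vacuous since $\ff\ss_\pp\cdot 1 = \ff\ss_\pp$; deleting a trailing~$1$ leaves an initial factor, all of whose length-two factors were already normal. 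Thus $\SS$-normality is invariant under the moves generating~$\simeq$, which gives the first sentence of~\ITEM1.

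The substantive half of~\ITEM1 is uniqueness, and its crux is the claim that, \emph{if $\ss_1 \sep \pdots \sep \ss_\pp$ is $\SS$-normal, then for all $\ss \in \SS$ and $\ff \in \MM$ one has $\ss \dive \ff\ss_1 \pdots \ss_\pp \Rightarrow \ss \dive \ff\ss_1$}. I would prove this by induction on~$\pp$, keeping the parameter~$\ff$ general: the case $\pp = 1$ is trivial and $\pp = 2$ is exactly~\eqref{E:Greedy}; for $\pp \ge 3$, apply the induction hypothesis to the still-normal tail $\ss_2 \sep \pdots \sep \ss_\pp$ with parameter~$\ff\ss_1$ to get $\ss \dive \ff\ss_1\ss_2$, and then apply~\eqref{E:Greedy} to the pair $\ss_1 \sep \ss_2$ with parameter~$\ff$ to conclude $\ss \dive \ff\ss_1$. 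Carrying the auxiliary~$\ff$ through the recursion is the delicate point of~\ITEM1, but it is precisely what makes this two-step reduction close.

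Taking $\ff = 1$ shows that $\ss_1$ is an $\SS$-head of $\ss_1 \pdots \ss_\pp$. If $\ss_1 \sep \pdots \sep \ss_\pp$ and $\tt_1 \sep \pdots \sep \tt_\qq$ both normalise~$\gg$, then $\tt_1 \dive \gg$ yields $\tt_1 \dive \ss_1$ and, symmetrically, $\ss_1 \dive \tt_1$; writing $\ss_1 = \tt_1\aa$ and $\tt_1 = \ss_1\bb$ gives $\ss_1 = \ss_1\bb\aa$, so $\bb\aa = 1$ by left-cancellativity, and the absence of nontrivial invertible elements forces $\ss_1 = \tt_1$. Left-cancelling this common entry and inducting on the length, I reach the stage where one word is exhausted; the remaining entries of the other then multiply to~$1$, hence (using left-cancellativity to make a one-sided inverse two-sided, and the hypothesis on invertibles) each equals~$1$, so the two words differ only by trailing~$1$'s. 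This proves~\ITEM1.

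For~\ITEM2 I would argue by induction on~$\pp$, the engine being a left-multiplication lemma: \emph{if $\tt_1 \sep \pdots \sep \tt_\rr$ is $\SS$-normal and $\ss \in \SS$, then $\ss\tt_1 \pdots \tt_\rr$ admits an $\SS$-normal decomposition of length at most~$\rr+1$.} Granting this, a representative $\vv_1 \pdots \vv_\pp \in \Pow\SS\pp$ of~$\gg$ is handled by normalising $\vv_2 \pdots \vv_\pp$ (length $\le \pp-1$ by induction) and left-multiplying by $\vv_1 \in \SS$. The left-multiplication lemma is proved by the staircase of Figure~\ref{F:LeftMult}: setting $\uu_0 = \ss$, one successively forms length-$\le 2$ normal decompositions $\ss_\ii \sep \uu_\ii$ of $\uu_{\ii-1}\tt_\ii$, so that $\ss\tt_1 \pdots \tt_\rr = \ss_1 \pdots \ss_\rr\,\uu_\rr$, and one must then verify that $\ss_1 \sep \pdots \sep \ss_\rr \sep \uu_\rr$ is $\SS$-normal. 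This verification is the main obstacle, and it rests on two genuinely Garside-theoretic inputs: that for $\ss,\tt \in \SS$ the product $\ss\tt$ has a normal decomposition of length at most~$2$ (the remainder of its $\SS$-head stays in~$\SS$, a closure property characterising Garside families), and the \emph{domino rule} for $\SS$-normality, which propagates normality of $\tt_\ii \sep \tt_{\ii+1}$ to normality of $\ss_\ii \sep \ss_{\ii+1}$. Both are established in~\cite{Garside}, the domino rule by tracking $\SS$-heads across the two commuting squares; it is exactly this propagation that the quadratic formalism of Section~\ref{S:Class3} later abstracts into Definition~\ref{D:Domino}.
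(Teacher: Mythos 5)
The paper gives no proof of this lemma: both parts are imported from \cite{Garside} (Prop.~III.1.25 and III.1.30), so there is no internal argument to compare against; I will therefore assess your proof on its own. Your part~\ITEM1 is correct and complete: the invariance of $\SS$-normality under appending or deleting trailing entries~$1$ is checked properly, the iterated greedy claim ($\ss \dive \ff\ss_1\pdots\ss_\pp \Rightarrow \ss \dive \ff\ss_1$ for $\SS$-normal words, with the parameter~$\ff$ carried through the induction) is exactly the right auxiliary statement, and the head-comparison plus left-cancellation argument for uniqueness, including the treatment of the leftover tail, is sound.

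Part~\ITEM2, however, has a genuine gap. The staircase of Figure~\ref{F:LeftMult} requires, at every step, an $\SS$-normal decomposition of $\uu_{\ii-1}\tt_\ii$ of length \emph{two}, and the existence of such a decomposition for a product of two elements of~$\SS$ is precisely the case $\pp=2$ of the statement you are proving: Definition~\ref{D:GarNormal} only guarantees a normal decomposition of some unspecified length, and the ``closure property'' you invoke (the complement of the head of~$\ss\tt$ staying in~$\SS$) is one of the nontrivial characterisations of Garside families in~\cite{Garside}, not a freebie---deferring it to the book begs the question. The gap is repairable, and in a way that makes the staircase and the domino rule unnecessary. Iterate your part-\ITEM1 claim: if $\rr_1\sep\pdots\sep\rr_\mm$ is $\SS$-normal, $\ff\in\MM$, and $\uu_1\wdots\uu_\kk$ lie in~$\SS$ with $\kk\le\mm$ and $\uu_1\pdots\uu_\kk\dive\ff\rr_1\pdots\rr_\mm$, then $\uu_1\pdots\uu_\kk\dive\ff\rr_1\pdots\rr_\kk$ (induction on~$\kk$: the case $\kk=1$ is your claim; for $\kk\ge2$ write $\ff\rr_1=\uu_1\bb$, left-cancel~$\uu_1$, and apply the induction hypothesis to $\rr_2\sep\pdots\sep\rr_\mm$ with parameter~$\bb$). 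Now take any $\SS$-normal decomposition $\rr_1\sep\pdots\sep\rr_\mm$ of $\gg=\ss_1\pdots\ss_\pp$ with no entry equal to~$1$ (an entry~$1$ forces all subsequent entries to be~$1$, so the entries~$1$ are trailing and can be truncated). If $\mm>\pp$ held, the claim with $\ff=1$ and $\kk=\pp$ would give $\gg\dive\rr_1\pdots\rr_\pp\dive\gg$, whence $\gg=\rr_1\pdots\rr_\pp$ by left-cancellativity and the absence of nontrivial invertible elements, forcing $\rr_{\pp+1}=1$, a contradiction; so $\mm\le\pp$.
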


Building on Lemma~\ref{L:GarNormal}, we immediately obtain

\begin{prop}\label{P:GarNormal}
Assume that~$\MM$ is a left-cancellative monoid with no nontrivial invertible elements and~$\SS$ is a Garside family of~$\MM$. Then every element~$\gg$ of~$\MM$ admits a unique $\SS$-normal decomposition of minimal length, and the corresponding map is a geodesic normal form on~$(\MM, \SS\setminus\{1\})$.
\end{prop}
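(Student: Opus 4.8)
The plan is to single out, among the (a priori non-unique) $\SS$-normal decompositions of a given $\gg$, the canonical one of minimal length, and then to verify the four things the statement asks for: that it exists, that it is unique, that it avoids the entry~$1$ (so that it is a word over $\SS\setminus\{1\}$), and that it is geodesic. The engine of the proof is Lemma~\ref{L:GarNormal}; the only genuinely new input is an argument isolating the behaviour of the entry~$1$, which is where the hypotheses of left-cancellativity and of the absence of nontrivial invertible elements enter.

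First I would settle existence. Since $\SS$ is a Garside family, Definition~\ref{D:GarNormal} guarantees that every $\gg$ in~$\MM$ admits at least one $\SS$-normal decomposition, so the set of lengths of such decompositions is a nonempty set of nonnegative integers and has a least element; I fix a decomposition $\NF(\gg)$ realising it. The key structural step is then to show that a minimal-length $\SS$-normal word has no entry equal to~$1$. Suppose $\ss_i = 1$ occurs with a successor~$\ss_{i+1}$; applying the greediness condition~\eqref{E:Greedy} to the $\SS$-normal pair $1\sep\ss_{i+1}$ with $\ff = 1$ and $\ss = \ss_{i+1}$ yields $\ss_{i+1}\dive 1$. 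As $\MM$ is left-cancellative with no nontrivial invertible element, a right inverse is automatically a two-sided inverse, so $\ss_{i+1}=1$. Hence a~$1$ can be followed only by further~$1$'s, and every $\SS$-normal word is a $1$-free word followed by a block of trailing~$1$'s. By Lemma~\ref{L:GarNormal}~\ITEM1, deleting a trailing~$1$ yields an $\SS$-normal decomposition of the same element, necessarily shorter; by minimality $\NF(\gg)$ therefore carries no trailing~$1$, hence no~$1$ at all, and so lies in $(\SS\setminus\{1\})^*$.

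For uniqueness I would invoke Lemma~\ref{L:GarNormal}~\ITEM1 once more: any two $\SS$-normal decompositions of~$\gg$ are $\simeq$-equivalent, that is, differ only by trailing entries~$1$, and two such words that are both $1$-free must coincide. Thus $\NF(\gg)$ is well defined, and by construction it evaluates to~$\gg$, so $\NF$ is a section of the canonical projection $\ev\colon(\SS\setminus\{1\})^*\to\MM$, i.e.\ a normal form on $(\MM,\SS\setminus\{1\})$ in the sense of Definition~\ref{D:NF}; in particular this exhibits $\SS\setminus\{1\}$ as a generating family of~$\MM$. Geodesy is then immediate from Lemma~\ref{L:GarNormal}~\ITEM2: a representative of~$\gg$ in $(\SS\setminus\{1\})^\pp$ is in particular a representative in~$\Pow\SS\pp$, so~$\gg$ admits an $\SS$-normal decomposition of length at most~$\pp$, whence $\LG{\NF(\gg)}\le\pp$.

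I expect the main obstacle to be precisely the structural step controlling the entry~$1$: one must reconcile the abstract divisibility formulation of $\SS$-normality with the absence of nontrivial invertibles in order to guarantee that the minimal-length normal form genuinely lives over $\SS\setminus\{1\}$ and not merely over~$\SS$. Once that is in place, existence, uniqueness and geodesy are direct consequences of the two parts of Lemma~\ref{L:GarNormal}.
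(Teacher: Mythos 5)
Your proof is correct and follows the route the paper intends: the paper states this proposition as an immediate consequence of Lemma~\ref{L:GarNormal} and gives no written proof, and your argument supplies exactly the missing details (the greediness condition~\eqref{E:Greedy} applied to a pair $1\sep\ss_{i+1}$ forces $\ss_{i+1}=1$ via left-cancellativity and the absence of nontrivial invertibles, so all entries~$1$ are trailing and minimality removes them, with uniqueness and geodesy then read off from the two parts of the lemma). No gaps.
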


We can then apply Proposition~\ref{P:GenNF}, and associate with the Garside family~$\SS$ a normalisation~$(\SS, \nm)$. The latter involves the generating set~$\SS\setminus\{1\}$ enriched with one letter representing the unit, and it is then natural to use~$1$ for that letter so that we simply recover~$\SS$. We shall then say that $(\SS, \nm)$ is \emph{derived from the Garside family~$\SS$}. In this case, $1$ is an $\nmS$-neutral element by Proposition~\ref{P:GenNF}, and $\MM$ admits the presentation~\eqref{E:Neutral2}. Here is the main observation:

\begin{prop}\label{P:GarClass}
Assume that $\MM$ is a left-cancellative monoid with no nontrivial invertible elements and~$\SS$ is a Garside family of~$\MM$. Then the normalisation~$(\SS, \nmS)$ derived from~$\SS$ is quadratic of class~$(4, 3)$.
\end{prop}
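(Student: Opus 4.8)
The plan is to verify the two defining conditions of a quadratic normalisation, then identify the class via the domino rule (Lemma~\ref{L:Domino}). By construction, $(\SS,\nmS)$ is the normalisation associated to the geodesic normal form of Proposition~\ref{P:GarNormal} through Proposition~\ref{P:GenNF}; the real content is that this normalisation is \emph{quadratic} of class exactly $(4,3)$.

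First I would establish that $(\SS,\nmS)$ is quadratic, that is, it satisfies~\eqref{E:Quad1} and~\eqref{E:Quad2}. The static condition~\eqref{E:Quad1} follows directly from Definition~\ref{D:Greedy}: an $\SS$-word is $\SS$-normal exactly when each of its length-two factors is $\SS$-normal, so $\nmS$-normality is detected on length-two factors. For the dynamical condition~\eqref{E:Quad2}, I would argue that one can pass from any $\SS$-word $\ww$ to $\nmS(\ww)$ by successively renormalising length-two factors; the mechanism is the standard ``local sliding'' of Garside theory, where one repeatedly replaces a non-normal adjacent pair $\ss_\ii\sep\ss_{\ii+1}$ by its normal form $\nmr(\ss_\ii\sep\ss_{\ii+1})$, together with the collapsing of trivial final entries handled by the $\nmS$-neutral element~$1$ (Proposition~\ref{P:GenNF}, Lemma~\ref{L:Neutral}). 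That this process reaches the global normal form is precisely the confluence/uniqueness supplied by Lemma~\ref{L:GarNormal}\ITEM1.

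The heart of the proof is the class computation, which I would carry out via Lemma~\ref{L:Domino}: it suffices to prove that the domino rule is valid for $\nmr$. Concretely, given $\tt_0,\ss_1,\ss_2$ with $\ss_1\sep\ss_2$ $\SS$-normal, and setting $\ss'_1\sep\tt_1 = \nmr(\tt_0\sep\ss_1)$ and $\ss'_2\sep\tt_2 = \nmr(\tt_1\sep\ss_2)$, I must show $\ss'_1\sep\ss'_2$ is $\SS$-normal. The key tool is the characterisation of $\SS$-normality through left-divisibility: $\ss'_1$ is the $\dive$-maximal element of $\SS$ left-dividing $\tt_0\ss_1\ss_2$ (this is the greediness in~\eqref{E:Greedy}), and similarly at each stage. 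I would verify $\SS$-normality of $\ss'_1\sep\ss'_2$ by checking~\eqref{E:Greedy} directly: if $\ss\in\SS$ and $\ss\dive\ff\ss'_1\ss'_2$, I use that $\ss'_1\sep\ss'_2\sep\tt_2$ represents $\tt_0\ss_1\ss_2$ together with the normality already known for the pairs $\ss'_1\sep\tt_1$ (built in) and $\ss_1\sep\ss_2$ (hypothesis) to propagate the divisor $\ss$ leftward, concluding $\ss\dive\ff\ss'_1$. This is the familiar ``$\SS$-normality is preserved under left-multiplication by a single generator,'' a standard structural fact about Garside families (the content behind Figure~\ref{F:LeftMult}); here one recognises the domino diagram as exactly this statement.

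\textbf{The main obstacle} I anticipate is the divisibility bookkeeping in the domino step: translating the greediness condition~\eqref{E:Greedy} through the two composed renormalisations and genuinely exhibiting the witness $\ff'$ with $\ss\dive\ff'\ss'_1$ requires careful use of left-cancellativity and of the absence of nontrivial invertibles (so that normal forms are literally unique, not merely unique up to $\simeq$). Once the domino rule is established, Lemma~\ref{L:Domino} gives class $(4,3)$ immediately—so I would not attempt to compute $\nmr_{\!212}$, $\nmr_{\!2121}$, $\nmr_{\!1212}$ by hand, but rather deduce~\eqref{E:Class3} as a \emph{consequence} of the domino rule via that lemma. The upgrade from right-class~$3$ to full class $(4,3)$ then follows from Lemma~\ref{L:Constraints} and Lemma~\ref{L:Class}, which force the left-class to be $3$ or $4$; a minimal check on a single non-normal length-three word (where left-normalisation genuinely needs the extra step that right-normalisation avoids) shows it is exactly~$4$.
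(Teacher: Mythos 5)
Your proposal is correct and follows essentially the same route as the paper: condition~\eqref{E:Quad1} read off from Definition~\ref{D:Greedy}, and the class obtained by verifying the domino rule for~$\nmr$ through exactly the divisibility propagation $\ss \dive \ff\ss'_1\ss'_2 \Rightarrow \ss \dive \ff\tt_0\ss_1\ss_2 \Rightarrow \ss \dive \ff\tt_0\ss_1 = \ff\ss'_1\tt_1 \Rightarrow \ss \dive \ff\ss'_1$ that the paper uses, followed by Lemma~\ref{L:Domino}. One small correction: once right-class~$3$ is established, Lemma~\ref{L:Class} already gives left-class~$4$ and hence class~$(4,3)$, so your final ``check that the left-class is exactly~$4$'' is superfluous and in fact cannot succeed in general --- for a bounded Garside family the minimal class is~$(3,3)$, which is still \emph{of} class~$(4,3)$ since the class is an upper bound, not a minimum.
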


\begin{proof}
That $\nmS$ satisfies~\eqref{E:Quad1} directly follows from Definition~\ref{D:Greedy}, since $\SS$-greedy words are defined by a condition that only involves length-two factors. For~\eqref{E:Quad2} and the more precise result about the class, it follows from~\cite[Prop.~III.1.45]{Garside} which states that the domino rule is valid for $\nmrS$. As the current statement is different from \cite[Prop.~III.1.45]{Garside}, we recall the argument.

\rightskip40mm
So assume that $\ss_1, \ss_2, \ss'_1, \ss'_2, \tt_0, \tt_1, \tt_2$ lie in~$\SS$, that $\ss_1 \sep \ss_2$ is $\SS$-normal, and that we have $\ss'_1 \sep \tt_1 = \nmS(\tt_0 \sep \ss_1)$ and $\ss'_2 \sep \tt_2 = \nmS(\tt_1 \sep \ss_2)$. Assume $\ss \in \SS$ and $\ss \dive \ff \ss'_1 \ss'_2$. A fortiori we have $\ss \dive \ff \ss'_1 \ss'_2 \tt_2$, hence $\ss \dive \ff \tt_0 \ss_1 \ss_2$, since the diagram on the right is commutative. As $\ss_1 \sep \ss_2$ is $\SS$-normal, we deduce $\ss \dive \ff\tt_0 \ss_1$, whence $\ss \dive \ff \ss'_1 \tt_1$. As $\ss'_1 \sep \tt_1$ is $\SS$-normal, we deduce $\ss \dive \ff \ss'_1$. This shows that $\ss'_1 \sep \ss'_2$ is $\SS$-normal.\hfill$\square$%
\begin{picture}(0,0)(-6,-2)
\psarc[style=thin](15,0){3}{180}{360}
\psarc[style=thin](15,10){3.5}{180}{270}
\psarc[style=thin](30,10){3.5}{180}{270}
\psarc[style=thinexist](15,10){3}{0}{180}
\pcline{->}(1,0)(14,0)\tbput{$\ss_1$}
\pcline{->}(16,0)(29,0)\tbput{$\ss_2$}
\pcline{->}(1,10)(14,10)\taput{$\ss'_1$}
\pcline{->}(16,10)(29,10)\taput{$\ss'_2$}
\pcline{->}(1,20)(14,20)\taput{$\ss$}
\pcline{->}(0,19)(0,11)\tlput{$\ff$}
\pcline{->}(0,9)(0,1)\tlput{$\tt_0$}
\pcline{->}(15,9)(15,1)\trput{$\tt_1$}
\pcline{->}(30,9)(30,1)\trput{$\tt_2$}
\pcline(16,20)(23,20)\psarc(23,13){7}{0}{90}
\pcline{->}(30,13)(30,11)
\end{picture}
\def\qed{\relax}\end{proof}

The result of Proposition~\ref{P:GarClass} is optimal: as the example below shows, $(4, 3)$ is in general the minimal class. Let us mention that there is a particular class of Garside families, called bounded~\cite[chapter~VI]{Garside}, for which the class drops to~$(3, 3)$ or less. Garside monoids~\cite{Dgk} are typical examples of the latter situation. 

\begin{exam}
The normalisation derived from the finite Garside family mentioned in Example~\ref{X:Artin} for the Artin--Tits monoid of type~$\Att$ is not of class~$(3, 3)$: for instance, one finds $\nmr_{\!121}(\sig1\sep\sig1\sig2\sep \sig1\sig3) = \sig1\sig2\sig1 \sep \sig2 \sep \sig3$, in which $\sig2 \sep \sig3$ is not normal.
\end{exam} 

\subsection{Left-weighted normalisation}\label{SS:LeftWeighted}

Definition~\ref{D:Greedy} is highly non-symmetric, so we can expect that the normalisations derived from Garside families satisfy some relations capturing the specific role of the left-hand side. 

\begin{defi}
Assume that $(\SS, \nm)$ is a (quadratic) normalisation for a monoid~$\MM$. We say that $(\SS, \nm)$ is \emph{left-weighted} if, for all $\ss, \tt, \ss', \tt'$ in~$\SS$, the equality $\ss'\vert\tt' = \nm(\ss \sep \tt)$ implies $\ss \dive \ss'$ in~$\MM$. 
\end{defi}

In other words, a normalisation~$(\SS, \nm)$ is left-weighted if, for every~$\ss$ in~$\SS$, the first entry of any length-two $\SS$-word~$\nm(\ss \sep \tt)$ is always a right-multiple of~$\ss$ in the associated monoid: normalising~$\ss\vert\tt$ amounts to adding something in the \emph{left} entry.

\begin{lemm}\label{L:LeftWeighted}
The normalisation derived from a Garside family in a left-cancellative monoid with no nontrivial invertible element is left-weighted.
\end{lemm}

\begin{proof}
Assume that $(\SS, \nmS)$ derives from a Garside family~$\SS$. If $\ss'\vert\tt' = \nmS(\ss\vert\tt)$ holds, $\ss$ is an element of~$\SS$, and we have $\ss \tt = \ss' \tt'$, whence $\ss \dive \ss' \tt'$. By assumption, $\ss' \sep \tt'$ is $\SS$-normal, so~\eqref{E:Greedy} implies $\ss \dive \ss'$. Thus~$\nmS$ is left-weighted.
\end{proof}

We shall now establish that left-weightedness characterises Garside normalisation, as stated in Theorem~C:

\begin{prop}\label{P:GarCharac}
Assume that~$(\SS, \nm)$ is a quadratic normalisation mod~$\one$ for a monoid~$\MM$ that is left-cancellative and contains no nontrivial invertible element. Then the following are equivalent:

\ITEM1 The family~$\SS$ is a Garside family in~$\MM$ and~$(\SS, \nm)$ derives from it.

\ITEM2 The normalisation~$(\SS, \nm)$ is of class~$(4, 3)$ and is left-weighted.
\end{prop}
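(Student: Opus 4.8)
The plan is to treat the two implications asymmetrically. The implication \ITEM1$\Rightarrow$\ITEM2 needs no new work: Proposition~\ref{P:GarClass} already shows that the normalisation derived from a Garside family is quadratic of class~$(4,3)$, and Lemma~\ref{L:LeftWeighted} shows that it is left-weighted. Hence the entire content lies in \ITEM2$\Rightarrow$\ITEM1, and I would reduce this to a single equivalence: for $\ss_1,\ss_2$ in~$\SS$, the word $\ss_1\sep\ss_2$ is $\nm$-normal if, and only if, it is $\SS$-normal in the sense of Definition~\ref{D:Greedy}. Granting this equivalence, the conclusion follows formally. Any $\SS$-word representing a given $\gg\in\MM$ has an $\nm$-normal form, which is then $\SS$-normal; so every element of~$\MM$ admits an $\SS$-normal decomposition, and, since $\ev$ embeds $\SS$ in~$\MM$ (two length-one words equivalent in~$\MM$ are equal by~\eqref{E:NormMap2}), $\SS$ contains the unit and $\MM$ has no nontrivial invertible element, $\SS$ is a Garside family. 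That $(\SS,\nm)$ is the derived normalisation then follows because the two normalisations have the same normal words and the same underlying monoid, while the minimal-length normal decomposition of an element is unique up to trailing~$\one$'s by Lemma~\ref{L:GarNormal}, so Proposition~\ref{P:GenNF} identifies them.

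For the easy half of the equivalence, suppose $\ss_1\sep\ss_2$ is $\SS$-normal and write $\nm(\ss_1\sep\ss_2)=\ss'_1\sep\ss'_2$. Left-weightedness gives $\ss_1\dive\ss'_1$, while the identity $\ss_1\ss_2=\ss'_1\ss'_2$ in~$\MM$ (from~\eqref{E:NormMap1}) gives $\ss'_1\dive\ss_1\ss_2$, whence $\ss'_1\dive\ss_1$ by~\eqref{E:Greedy} applied with $\ff=1$. Combining $\ss_1\dive\ss'_1\dive\ss_1$ with left-cancellativity and the absence of nontrivial invertible elements forces $\ss_1=\ss'_1$, hence $\ss_2=\ss'_2$ by cancellation, so $\ss_1\sep\ss_2$ is $\nm$-normal.

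The hard half, that $\nm$-normal implies $\SS$-normal, is where the domino rule enters. I would isolate two facts about the \emph{head} (the first entry) of an $\nm$-normal form, both obtained from the staircase of Lemma~\ref{L:LeftMult}, which is available because class~$(4,3)$ gives the domino rule by Lemma~\ref{L:Domino}. First, if $\gg$ has $\nm$-normal form $\hh_1\sep\pdots\sep\hh_\mm$ and $\ss\in\SS$ satisfies $\ss\dive\gg$, then $\ss\dive\hh_1$: writing $\gg=\ss\jj$ and left-multiplying the $\nm$-normal form of~$\jj$ by~$\ss$, Lemma~\ref{L:LeftMult} yields a head that is a right-multiple of~$\ss$ by left-weightedness, and this head is exactly~$\hh_1$. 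Second, if $\ss_1\sep\ss_2$ is $\nm$-normal then, for every $\ff\in\MM$, the head of the $\nm$-normal form of $\ff\ss_1\ss_2$ coincides with the head of that of $\ff\ss_1$, hence divides $\ff\ss_1$. Granting these, the greedy condition is immediate: if $\ss\dive\ff\ss_1\ss_2$, the first fact gives $\ss\dive\hh_1$ and the second gives $\hh_1\dive\ff\ss_1$, so $\ss\dive\ff\ss_1$, which is exactly~\eqref{E:Greedy}.

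The main obstacle is the second fact, the invariance of the head under appending the already-normal letter~$\ss_2$. I would prove it by prepending the letters of an $\SS$-word representing~$\ff$ one at a time: by Lemma~\ref{L:LeftMult} and its staircase (Figure~\ref{F:LeftMult}), prepending a generator~$\tt$ to any $\nm$-normal word produces an $\nm$-normal word whose head depends only on~$\tt$ and on the previous head, being the first entry of $\nmr(\tt\sep\hh)$ with $\hh$ the old head. Since the computations for $\ff\ss_1\ss_2$ and for $\ff\ss_1$ both start from head~$\ss_1$ and apply the identical sequence of such operations, their heads agree at every stage, and the trailing column~$\ss_2$ never feeds back into the head. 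The remaining care is bookkeeping around the neutral element: throughout I would use that $(\SS,\nm)$ is a normalisation mod~$\one$, so by Lemma~\ref{L:Neutral} every $\nm$-normal form is a genuine $\SS\setminus\{\one\}$-word padded by a block of~$\one$'s; this guarantees that the staircase heads computed above are the actual first entries of the normal decompositions and that the divisibilities read off in~$\MM$ are the intended ones.
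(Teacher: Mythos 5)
Your proposal is correct, and for the substantial implication \ITEM2$\Rightarrow$\ITEM1 it follows a genuinely different route from the paper. The common core is your fact~(a), which is exactly the paper's Lemma~\ref{L:Head}: the head of the $\nm$-normal form of~$\gg$ is a left-divisor of~$\gg$ that is left-divided by every element of~$\SS$ dividing~$\gg$, proved via the staircase of Lemma~\ref{L:LeftMult} plus left-weightedness. The paper, however, does not attack~\eqref{E:Greedy} for general~$\ff$ directly: it first shows that~$\SS$ is closed under right-divisors (its Lemma~\ref{L:RDClosed}), hence \emph{solid}, invokes~\cite[Prop.~IV.2.7]{Garside} to conclude that a solid generating family admitting heads is a Garside family, and only then uses~\cite[Cor.~IV.1.31]{Garside} to reduce the greedy condition to the case $\ff=1$, where Lemma~\ref{L:Head} finishes. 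You replace both citations by your fact~(b) --- the invariance $\HH(\ff\ss_1\ss_2)=\HH(\ff\ss_1)$ for $\nm$-normal $\ss_1\sep\ss_2$, obtained by prepending the letters of a representative of~$\ff$ one at a time and observing that the staircase head depends only on the previous head --- which yields~\eqref{E:Greedy} for arbitrary~$\ff$ and then Garside-family-hood straight from Definition~\ref{D:GarNormal}. What each approach buys: yours is self-contained modulo only the uniqueness statement of Lemma~\ref{L:GarNormal}, at the price of one extra induction; the paper's is shorter given the cited results, and its Lemma~\ref{L:RDClosed} is reused later (Proposition~\ref{P:Triangle}). Two minor remarks: your ``easy half'' is not actually needed (one direction plus $\simeq$-uniqueness of $\SS$-normal decompositions already identifies the two normalisations, which is how the paper argues), and its last step infers $\ss_2=\ss'_2$ as letters from an equality in~$\MM$, which tacitly uses that distinct letters of~$\SS$ represent distinct elements of~$\MM$ --- an identification the paper itself makes throughout Section~\ref{S:Garside}, so this is not a genuine gap.
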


According to Proposition~\ref{P:GarClass} and Lemma~\ref{L:LeftWeighted}, the implication \ITEM1$\Rightarrow$\ITEM2 holds and we are left with the converse direction. So, until the end of the subsection, we assume that $(\SS, \nm)$ is a left-weighted quadratic normalisation mod~$\one$ of class~$(4, 3)$ for a monoid~$\MM$ that is left-cancellative and contains no nontrivial invertible element. Our aim is to show that~$\SS$ is a Garside family in~$\MM$ and that~$\nm$ derives from it. We decompose the argument into several steps.

\begin{lemm}\label{L:RDClosed}
The family~$\SS$ is closed under right-divisor in~$\MM$.
\end{lemm}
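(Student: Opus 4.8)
The plan is to prove closure under right-divisor by induction on the geodesic length of the quotient, the crucial case being when that quotient is a single element of~$\SS$. The point to keep in mind is that $\MM$ is \emph{not} graded here, since the relation $\one = 1$ collapses a generator; so the naive degree argument is unavailable and one must instead control how copies of~$\one$ are created during normalisation.

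First I record the key single-step statement: if $\rr, \ss$ lie in~$\SS$ and $\rr \cdot \gg = \ss$ holds in~$\MM$, then $\gg$ lies in~$\SS$. To establish it, write the geodesic (that is, $\one$-free) normal form of~$\gg$ as $\ss_1 \sep \pdots \sep \ss_\qq$, so that $\ss_1 \sep \pdots \sep \ss_\qq$ is $\nm$-normal and $\gg = \ss_1 \pdots \ss_\qq$; the goal is $\qq \le 1$. Since the class is~$(4,3)$, the domino rule is valid (Lemma~\ref{L:Domino}), so Lemma~\ref{L:LeftMult} applies and yields
\[\nm(\rr \sep \ss_1 \sep \pdots \sep \ss_\qq) = \nmr_{\!12\pdots\qq}(\rr \sep \ss_1 \sep \pdots \sep \ss_\qq) = \ss'_1 \sep \pdots \sep \ss'_\qq \sep \tt_\qq,\]
where $\tt_0 = \rr$ and $\ss'_\ii \sep \tt_\ii = \nmr(\tt_{\ii-1} \sep \ss_\ii)$. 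On the other hand $\rr \sep \ss_1 \sep \pdots \sep \ss_\qq$ represents~$\ss$, a single element of~$\SS$, whose normal form is $\ss \sep \one^\qq$: indeed, by Lemma~\ref{L:Neutral}~\ITEM3 the $\one$-free part of the normal form of any representative of~$\ss$ is the geodesic normal form of~$\ss$, which is~$\ss$ itself. Comparing the two expressions gives $\ss'_1 = \ss$ and $\ss'_\ii = \one$ for $2 \le \ii \le \qq$.

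The heart of the matter is then left-weightedness. From $\ss'_\ii \sep \tt_\ii = \nmr(\tt_{\ii-1}\sep\ss_\ii)$ it yields $\tt_{\ii-1} \dive \ss'_\ii$ in~$\MM$. For $\ii \ge 2$ we have $\ss'_\ii = \one$, hence $\tt_{\ii-1} \dive 1$; as $\MM$ is left-cancellative with no nontrivial invertible element, this forces $\tt_{\ii-1}$ to evaluate to~$1$, and then \eqref{E:NormMap2} applied to the length-one words $\tt_{\ii-1}$ and~$\one$ shows that $\tt_{\ii-1}$ is the letter~$\one$. Neutrality~\eqref{E:QuadNeutral1} now gives $\nmr(\tt_{\ii-1}\sep\ss_\ii) = \nmr(\one \sep \ss_\ii) = \ss_\ii \sep \one$, so $\ss'_\ii = \ss_\ii$; combined with $\ss'_\ii = \one$ this gives $\ss_\ii = \one$ for every $\ii \ge 2$, contradicting the $\one$-freeness of the chosen normal form unless $\qq \le 1$. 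Thus $\gg = \one$ or $\gg = \ss_1$, so $\gg \in \SS$. This single-step statement is exactly the main obstacle: there is no length-based shortcut in the non-graded monoid, and it is precisely the interaction of left-weightedness with the domino-rule normal form of Lemma~\ref{L:LeftMult} that forces every entry beyond the first to collapse to~$\one$.

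Finally I upgrade to an arbitrary right-divisor by induction on the geodesic length~$\nn$ of a quotient~$\hh$ with $\hh\gg = \ss$ and $\ss \in \SS$. For $\nn = 0$ we have $\hh = 1$ and $\gg = \ss \in \SS$. For $\nn \ge 1$, write the $\one$-free normal form of~$\hh$ as $\rr_1 \sep \pdots \sep \rr_\nn$; its suffix $\rr_2 \sep \pdots \sep \rr_\nn$ is again $\nm$-normal, since factors of normal words are normal by~\eqref{E:Quad1}, and represents $\hh' := \rr_2 \pdots \rr_\nn$, of geodesic length $\nn-1$. Setting $\ss' := \hh' \gg$ we have $\ss = \rr_1 \ss'$ with $\rr_1, \ss \in \SS$, so the single-step statement gives $\ss' \in \SS$; then $\ss' = \hh' \gg$ together with the induction hypothesis gives $\gg \in \SS$, completing the argument.
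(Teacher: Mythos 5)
Your proof is correct, and its skeleton is the same as the paper's: peel the cofactor one letter at a time to reduce to the single-step case where the cofactor lies in~$\SS$, then treat that case by applying Lemma~\ref{L:LeftMult} to~$\rr$ followed by a normal decomposition of~$\gg$ and comparing the result with the normal form $\ss\sep\one^\qq$ of the element~$\ss$ of~$\SS$ (whether you peel the first or the last letter of the cofactor is immaterial). The one genuine divergence is how you force the entries beyond the first to collapse. You invoke left-weightedness to obtain $\tt_{\ii-1}\dive\ss'_\ii$ with $\ev(\ss'_\ii)=1$, deduce $\tt_{\ii-1}=\one$ using left-cancellativity and the absence of nontrivial invertible elements, and then use neutrality of~$\one$ to get $\ss'_\ii=\ss_\ii$, contradicting $\one$-freeness. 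The paper does not use left-weightedness here at all: from the equality $\tt_{\kk-1}\ss_\kk=\ss'_\kk\tt_\kk$ in~$\MM$ (commutativity of each normalisation square) together with $\ss'_2=\pdots=\ss'_\pp=\tt_\pp=1$, a downward induction on~$\kk$ starting from $\tt_\pp=1$ gives $\tt_{\kk-1}\ss_\kk=1$, whence $\tt_{\kk-1}=\ss_\kk=1$. Both arguments are legitimate, since left-weightedness is among the standing hypotheses of the subsection, but the paper's variant shows that this closure property needs only class~$(4,3)$ and the hypotheses on invertibility, not left-weightedness---worth remembering if you ever want the lemma outside the left-weighted setting. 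A small point of care, common to both proofs: passing from $\ev(\tt_{\ii-1})=1$ to the identification of the letter~$\tt_{\ii-1}$ with the letter~$\one$ implicitly uses that $\one$ is the only generator representing the unit, which is the intended reading of ``normalisation mod~$\one$''.
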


\begin{proof}
Assume $\ss \in \SS$. An element~$\gg$ of~$\MM$ is a right-divisor of~$\ss$ if there exists~$\ff$ in~$\MM$ satisfying $\ss = \ff\gg$. We prove~$\gg\in\SS$ by induction on the minimal length~$\LG\ff$ of the $\SS$-words representing~$\ff$. For $\LG\ff = 0$, we have $\ss = \gg$, whence $\gg \in \SS$. 

Assume $\LG\ff\ge 1$. Then we can write $\ff = \ff' \tt$ for some~$\ff'$ satisfying $\LG{\ff'} = \LG\ff-1$ and some~$\tt$ in~$\SS$. Then we have $\ss = \ff'\tt\gg$, so the induction hypothesis implies $\tt\gg \in \SS$ and, therefore, the $\SS$-normal decompositions of~$\tt\gg$ are the $\SS$-words $\tt\gg \sep 1 \sep \pdots \sep 1$. Let $\ss_1 \sep \pdots \sep \ss_\pp$ be an $\nm$-normal decomposition of~$\gg$. For $\pp = 1$, we have $\gg = \ss_1 \in \SS$. So assume $\pp \ge 2$. By Proposition~\ref{P:NormClass3}, as~$\nm$ is quadratic of class~$(4, 3)$, the domino rule is valid for~$\nmr$ and, therefore, an $\nm$-normal decomposition of~$\tt \gg$ is $\ss'_1 \sep \pdots \sep \ss'_\pp \sep \tt_\pp$ where we put $\tt_0 = \tt$ and $\ss'_\ii \sep \tt_\ii = \nm(\tt_{\ii-1} \sep \ss_\ii)$ for $\ii = 1 \wdots \pp$.
By $\simeq$-uniqueness of the $\nm$-normal form, we have $\ss'_1 = \tt\gg$ and $\ss'_2 = \pdots = \ss'_\pp = \tt_\pp = 1$. 
Now, we prove using induction on~$\kk$ decreasing from~$\pp$ to~$2$ that $\tt_{\kk-1}$ and~$\ss_\kk$ equal~$1$. For $\kk = \pp$, we have $\ss'_{\pp}=\tt_{\pp}=1$; by construction, we have $\tt_{\pp-1} \ss_\pp = \ss'_\pp \tt_\pp$, whence $\tt_{\pp-1} \ss_\pp = 1$, and $ \tt_{\pp-1} = \ss_\pp = 1$, since~$\MM$ contains no nontrivial invertible element. For $2 \le \kk < \pp$, we have $\ss'_\kk = 1$ by assumption and $\tt_\kk = 1$ by induction hypothesis, so that the same argument gives $ \tt_{\kk-1} = \ss_\kk = 1$. Thus~$\gg$ admits an $\nm$-normal decomposition of the form $\ss_1 \sep 1 \sep \pdots \sep 1$ and, therefore, it belongs to~$\SS$.
\end{proof}

\begin{lemm}\label{L:Head}
For~$\gg$ in~$\MM$, define~$\HH(\gg)$ to be~$1$ for~$\gg = 1$, and to be the first entry in the $\nm$-normal decomposition of~$\gg$ otherwise. Then $\HH(\gg)$ is an element of~$\SS$ that left-divides~$\gg$, and every element of~$\SS$ that left-divides~$\gg$ in~$\MM$ left-divides~$\HH(\gg)$.
\end{lemm}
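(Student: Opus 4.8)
The plan is to treat the two assertions separately, the first being essentially a bookkeeping remark and the second the substantive one. For the first assertion, fix a representative $\SS$-word~$\ww$ of~$\gg$, so that the $\nm$-normal decomposition of~$\gg$ is $\nm(\ww)$; writing $\nm(\ww) = \ss_1 \sep \pdots \sep \ss_\pp$, each entry lies in~$\SS$, so $\HH(\gg) \in \SS$, and since $\ev(\nm(\ww)) = \ev(\ww) = \gg$ holds in~$\MM$ (identifying~$\one$ with~$1$) we have $\gg = \ss_1 \pdots \ss_\pp$, whence $\HH(\gg) = \ss_1$ left-divides~$\gg$. Using the $\one$-neutrality of~$\one$ via Lemma~\ref{L:Neutral}~\ITEM3, the leading entry~$\ss_1$ is non-trivial when $\gg \neq 1$, so $\HH(\gg)$ is well defined and distinct from~$1$ in that case.

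For the second assertion, let $\ss \in \SS$ with $\ss \dive \gg$. The claim is trivial when $\ss = 1$, so I would assume $\ss \neq 1$ and write $\gg = \ss\hh$. If $\hh = 1$ then $\gg = \ss$ and $\HH(\gg) = \ss$, so there is nothing to prove; hence assume $\hh \neq 1$ and let $\tt_1 \sep \pdots \sep \tt_\qq$ be the $\nm$-normal decomposition of~$\hh$, with $\qq \ge 1$. Then $\ss \sep \tt_1 \sep \pdots \sep \tt_\qq$ is a representative of~$\gg$, so $\nm(\ss \sep \tt_1 \sep \pdots \sep \tt_\qq)$ is the $\nm$-normal decomposition of~$\gg$ and its first entry is~$\HH(\gg)$. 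The key is to compute this first entry: since $(\SS, \nm)$ is of class~$(4, 3)$, the domino rule is valid for~$\nmr$ by Lemma~\ref{L:Domino}, so Lemma~\ref{L:LeftMult} applies and yields $\nm(\ss \sep \tt_1 \sep \pdots \sep \tt_\qq) = \nmr_{\!12\pdots\qq}(\ss \sep \tt_1 \sep \pdots \sep \tt_\qq)$; writing $a_1 \sep r_1 = \nmr(\ss \sep \tt_1)$, the first entry of this word is~$a_1$.

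It then suffices to invoke left-weightedness: from $a_1 \sep r_1 = \nm(\ss \sep \tt_1)$ with $\ss, \tt_1 \in \SS$ we obtain $\ss \dive a_1$. As $\ss \neq 1$ and $\MM$ has no nontrivial invertible element, $a_1 \neq 1$, so $a_1$ is genuinely the leading entry of the $\nm$-normal decomposition of~$\gg$, that is, $a_1 = \HH(\gg)$; hence $\ss \dive \HH(\gg)$, as required.

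The main obstacle I anticipate is the identification $a_1 = \HH(\gg)$: one must be sure that the first entry produced by the column-by-column normalisation of Lemma~\ref{L:LeftMult} coincides with the first entry of the canonical $\nm$-normal decomposition of~$\gg$. This rests on two earlier facts — that $\nm(\ww)$ depends only on~$\ev(\ww)$, by~\eqref{E:NormMap2}, so any representative may be used to read off~$\HH$, and that $a_1$ is non-trivial, so it is not one of the trailing copies of~$\one$ that $\nm$ pushes to the right. Left-cancellativity and the absence of nontrivial invertible elements are used precisely to exclude the degenerate case $a_1 = 1$, whereas the class~$(4, 3)$ hypothesis enters only through the domino rule underpinning Lemma~\ref{L:LeftMult}.
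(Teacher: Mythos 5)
Your proof is correct and follows essentially the same route as the paper's: both normalise $\tt\sep(\text{normal form of }\hh)$ via the domino rule (Lemma~\ref{L:LeftMult}), identify the first entry of the resulting $\nm$-normal word with $\HH(\gg)$ by uniqueness of the normal form up to trailing copies of~$\one$, and conclude by left-weightedness applied to $\nmr(\ss\sep\tt_1)$. The extra care you take with the degenerate cases ($\ss=1$, $\hh=1$, and ruling out $a_1=\one$ via the absence of nontrivial invertible elements) is a sound refinement of details the paper leaves implicit.
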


\begin{proof}
By definition, $\HH(\gg)$ belongs to~$\SS$, and it left-divides~$\gg$ in~$\MM$, since we have $\gg = \HH(\gg) \ev(\ww)$ if $\HH(\gg) \sep \ww$ is the $\nm$-normal decomposition of~$\gg$. 

Now assume that $\tt$ is an element of~$\SS$ that left-divides~$\gg$, say $\gg = \tt \hh$. Let $\ss_1 \sep \pdots \sep \ss_\pp$ be the $\nm$-normal decomposition of~$\hh$. As $\nm$ is quadratic of class~$(4, 3)$, the domino rule is valid for~$\nm$, so the $\nm$-normal decomposition of~$\gg$ is $\ss'_1 \sep \pdots \sep \ss'_\pp \sep \tt_\pp$ with $\tt_0 = \tt$ and $\ss'_\ii \sep \tt_\ii = \nm(\tt_{\ii-1} \sep \ss_\ii)$ for $\ii = 1 \wdots \pp$. By uniqueness of the $\nm$-normal form, we must have $\HH(\gg) = \ss'_1$. But the fact that $(\SS, \nm)$ is left-weighted implies that $\tt_0$ left-divides the first entry in~$\nm(\tt_0 \sep \ss_1)$, which is $\ss'_1\sep\tt_1$, so $\tt\dive\HH(\gg)$ holds.
\end{proof}

\begin{lemm}
The family~$\SS$ is a Garside family in the monoid~$\MM$.
\end{lemm}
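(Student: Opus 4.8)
The plan is to prove that every element of~$\MM$ admits an $\SS$-normal decomposition, using the $\nm$-normal decomposition itself as the candidate. Fix $\gg$ in~$\MM$ and a representative $\SS$-word~$\ww$; then $\nm(\ww)$ is an $\SS$-word $\ss_1 \sep \pdots \sep \ss_\pp$ with $\ss_1 \pdots \ss_\pp = \gg$ in~$\MM$, since $\one$ evaluates to~$1$. Because $1 = \one$ lies in~$\SS$ and $\MM$ has no nontrivial invertible element, it suffices, by Definition~\ref{D:GarNormal}, to show that $\ss_1 \sep \pdots \sep \ss_\pp$ is $\SS$-normal, that is, that each factor $\ss_\ii \sep \ss_{\ii+1}$ satisfies~\eqref{E:Greedy}. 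By~\eqref{E:Quad1}, every such factor is itself $\nm$-normal, so the whole statement reduces to the following claim: \emph{every $\nm$-normal length-two word $\ss_1 \sep \ss_2$ is $\SS$-normal.}

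To establish the claim I would fix $\ss$ in~$\SS$ and $\ff$ in~$\MM$ with $\ss \dive \ff \ss_1 \ss_2$, and prove $\ss \dive \ff \ss_1$. By Lemma~\ref{L:Head} applied to $\ff \ss_1 \ss_2$, the hypothesis gives $\ss \dive \HH(\ff \ss_1 \ss_2)$, so it is enough to show $\HH(\ff \ss_1 \ss_2) \dive \ff \ss_1$. Since $\HH(\ff \ss_1)$ left-divides $\ff \ss_1$ by Lemma~\ref{L:Head}, the claim follows from the key identity
$$\HH(\ff \ss_1 \ss_2) = \HH(\ff \ss_1),$$
valid whenever $\ss_1 \sep \ss_2$ is $\nm$-normal.

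This identity expresses that the first letter of a normal form is insensitive to trailing entries, and it is where the class-$(4,3)$ hypothesis enters. Since $(\SS, \nm)$ is of class~$(4,3)$, the domino rule is valid for~$\nmr$ (Lemma~\ref{L:Domino}), so Lemma~\ref{L:LeftMult} applies: for any $\nm$-normal word $\vv = \rr_1 \sep \pdots \sep \rr_\qq$ and any~$\tt$ in~$\SS$, one has $\nm(\tt \sep \vv) = \nmr_{\!12\pdots\qq}(\tt \sep \vv)$, and the very first application $\nmr_{\!1}$ already fixes the leading entry of $\nm(\tt \sep \vv)$ as the first entry of $\nmr(\tt \sep \rr_1)$, the subsequent maps $\nmr_{\!2} \wdots \nmr_{\!\qq}$ never touching position~$1$. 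Hence the leading entry of $\nm(\tt \sep \vv)$ depends only on~$\tt$ and on the leading entry~$\rr_1$ of~$\vv$, not on $\rr_2 \wdots \rr_\qq$. Writing $\ff = \tt_1 \pdots \tt_\mm$ as a product of elements of~$\SS$ and peeling off normalisations with~\eqref{E:NormSys3}, I would express $\HH(\ff \ss_1 \ss_2)$ as the leading entry of $\nm(\tt_1 \sep \nm(\tt_2 \sep \pdots \sep \nm(\tt_\mm \sep \ss_1 \sep \ss_2)\pdots))$. Running the same peeling for~$\ff \ss_1$, an induction on~$\mm$ then shows that at every stage the two computations share the same leading entry: they are seeded by the normal words $\ss_1 \sep \ss_2$ and $\ss_1$, which have the same first letter~$\ss_1$, and the locality statement just proved propagates this equality through each left-multiplication by $\tt_\mm \wdots \tt_1$. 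The two heads therefore coincide, proving the identity and hence the claim.

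The hard part is precisely this last step: Lemma~\ref{L:Head} only yields the greedy condition~\eqref{E:Greedy} in the special case $\ff = 1$ (namely that $\ss_1$ is the $\dive$-greatest element of~$\SS$ dividing $\ss_1 \ss_2$), whereas $\SS$-normality demands it for an arbitrary left multiple~$\ff$. Bridging this gap is exactly the role of the identity $\HH(\ff \ss_1 \ss_2) = \HH(\ff \ss_1)$, whose proof rests on the locality of heads under left-multiplication, itself a consequence of the domino rule available in class~$(4,3)$. Once the claim holds, each factor of the $\nm$-normal decomposition of an arbitrary~$\gg$ is $\SS$-normal, so that decomposition is $\SS$-normal and~$\gg$ admits an $\SS$-normal decomposition; as~$\gg$ was arbitrary, $\SS$ is a Garside family in~$\MM$.
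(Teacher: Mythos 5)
Your proof is correct, but it follows a genuinely different route from the paper's. The paper first proves that $\SS$ is closed under right-divisor (Lemma~\ref{L:RDClosed}), so that $\SS$ is \emph{solid} in the sense of \cite[Section~IV.2]{Garside}, and then invokes the external characterisation \cite[Prop.~IV.2.7]{Garside}: a solid generating family is a Garside family if and only if every element admits a head with the two properties of Lemma~\ref{L:Head}; the lemma is then immediate. You instead verify Definition~\ref{D:GarNormal} directly, exhibiting the $\nm$-normal decomposition of an arbitrary element as an $\SS$-normal decomposition. By~\eqref{E:Quad1} this reduces to showing that every $\nm$-normal length-two word $\ss_1\sep\ss_2$ satisfies~\eqref{E:Greedy}, and the work is concentrated in your identity $\HH(\ff\ss_1\ss_2)=\HH(\ff\ss_1)$, which you obtain by writing $\ff$ as a product of elements of~$\SS$, peeling with~\eqref{E:NormSys3}, and observing via Lemma~\ref{L:LeftMult} that the leading entry of $\nm(\tt\sep\vv)$, for $\vv$ an $\nm$-normal word, is the first entry of $\nmr(\tt\sep\rr_1)$ and hence depends only on~$\tt$ and the leading entry~$\rr_1$ of~$\vv$; this inductive step is sound, and the two seeds $\ss_1\sep\ss_2$ and $\ss_1$ do share the same leading entry. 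Both arguments ultimately rest on Lemma~\ref{L:Head}, hence on the domino rule and left-weightedness. What your version buys is self-containedness: it bypasses the citation of \cite[Prop.~IV.2.7]{Garside} and, for this particular lemma, does not even use Lemma~\ref{L:RDClosed} (which the paper still needs elsewhere, for instance to identify~$\nm$ with the derived normalisation). What it costs is the extra induction establishing compatibility of heads with left-multiplication, which is essentially the identity $\HH(\ff\gg)=\HH(\ff\,\HH(\gg))$ that the cited proposition encapsulates.
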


\begin{proof}
By assumption, $\SS$ contains~$1$ and, by Lemma~\ref{L:RDClosed}, it is closed under right-divisor. So~$\SS$ is what is called solid in~\cite[Section~IV.2]{Garside}. Moreover, by definition, $\SS$ is a generating family in~$\MM$. Then, by~\cite[Prop.~IV.2.7]{Garside}, we know that a solid generating family is a Garside family in~$\MM$ if, and only if, for every element~$\gg$ of~$\MM$, there exists an element~$\HH(\gg)$ of~$\SS$ with the properties of Lemma~\ref{L:Head}. Thus the latter lemma implies that~$\SS$ is a Garside family in~$\MM$. 
\end{proof}

We can now complete the argument.

\begin{proof}[Proof of Proposition~\ref{P:GarCharac}]
Owing to the previous results, it only remains to show that, in the implication \ITEM2$\Rightarrow$\ITEM1, the given normalisation~$(\SS, \nm)$ coincides with the one, say~$(\SS, \nm')$, derived from the Garside family~$\SS$. As both normalisations are quadratic, it is sufficient to prove $\nm(\ss_1 \sep \ss_2)=\nm'(\ss_1\sep\ss_2)$ for all $\ss_1,\ss_2\in\SS$, and, to this end, it is sufficient to prove that $\nm(\ss_1 \sep \ss_2)$ is $\SS$-normal (in the sense of Definition~\ref{D:Greedy}). Now assume $\ss'_1\sep\ss'_2=\nm(\ss_1 \sep \ss_2)$. Since~$\SS$ is a Garside family in~$\MM$, we can appeal to~\cite[Corollary~IV.1.31]{Garside} which says that~$\ss'_1 \sep \ss'_2$ is $\SS$-greedy if, and only if, every element of~$\SS$ that left-divides~$\ss'_1 \ss'_2$ left-divides~$\ss'_1$: we can skip the term~$\ff$ in~\eqref{E:Greedy}. Now assume $\ss \in \SS$ and $\ss \dive \ss'_1 \ss'_2 = \ss_1 \ss_2$. By Lemma~\ref{L:Head}, we have $\tt \dive \HH(\ss_1 \ss_2) = \ss'_1$ and, therefore, $\ss'_1 \sep \ss'_2$ is $\SS$-greedy. 
\end{proof}

\begin{rema*}
In Proposition~\ref{P:GarCharac}, we take as an assumption that the monoid~$\MM$ associated with~$(\SS, \nm, \ee)$ is left-cancellative and has no nontrivial invertible element. It is natural to wonder whether explicit conditions involving~$(\SS, \nm, \ee)$ imply these assumptions. For invertible elements, requiring that $\ss\sep\tt \not= \ee\sep\ee$ implies $\nm(\ss\sep\tt) \not= \ee\sep\ee$ is such a condition but, for left-cancellativity, we leave it as an open question.
\end{rema*}

\subsection{Two further results}\label{SS:Applic}

By Proposition~\ref{P:QuadPres}, if $(\SS, \nm)$ is a quadratic normalisation for a monoid~$\MM$, then $\MM$ admits a presentation consisting of all quadratic relations $\ss\sep\tt =\ss'\sep\tt'$, with $\ss'\sep\tt'=\nm (\ss \sep \tt)$. In fact, in the left-weighted case, this presentation can be replaced with a smaller one involving triangular relations of the form $\rr\sep\ss = \tt$.

\begin{prop}\label{P:Triangle}
Assume that $(\SS, \nm)$ is a left-weighted quadratic normalisation system of class~$(4, 3)$ mod~$\ee$ for a left-cancellative monoid~$\MM$. Then~$\MM$ admits the presentation $(\SSm, \TT)$ where~$\TT$ consists of all relations~$\ss\sep\tt = \ss\tt$ with $\ss,\tt$ in~$\SSm$ satisfying $\ss\tt\in\SS$.
\end{prop}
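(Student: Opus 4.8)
The plan is to play the triangular presentation $(\SSm,\TT)$ off against the quadratic presentation of~$\MM$ furnished by Proposition~\ref{P:QuadPres}~\ITEM3, namely $\PRESp{\SSm}{\{\ss\sep\tt = \pi_\ee(\nmr(\ss\sep\tt)) \mid \ss,\tt\in\SSm\}}$. One direction is immediate: every relation $\ss\sep\tt = \ss\tt$ of~$\TT$ is valid in~$\MM$, since the hypothesis $\ss\tt\in\SS$ means precisely that $\ev_\ee(\ss\sep\tt)$ is a single generator in~$\SSm$; hence there is a canonical surjection of $\PRESp{\SSm}{\TT}$ onto~$\MM$. It therefore remains to show that each quadratic relation $\ss\sep\tt = \pi_\ee(\nmr(\ss\sep\tt))$ is a consequence of~$\TT$. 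Once this is done, the two families generate the same congruence on~$\SSm^*$, so the surjection is an isomorphism and $(\SSm,\TT)$ presents~$\MM$.

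To derive a quadratic relation, I would fix $\ss,\tt$ in~$\SSm$ and write $\ss'\sep\tt' = \nmr(\ss\sep\tt)$; if $\ss\sep\tt$ is already $\nm$-normal there is nothing to prove, so assume it is not. The key move is to factor the relation through a single intermediate generator. Because $(\SS,\nm)$ is left-weighted, we have $\ss\dive\ss'$, say $\ss' = \ss\rr$; left-cancelling~$\ss$ in the equality $\ss\tt = \ss'\tt' = \ss\rr\tt'$ (valid in the left-cancellative monoid~$\MM$) then gives $\tt = \rr\tt'$. Thus~$\rr$ is a right-divisor of $\ss'\in\SS$, and since $\SS$ is a Garside family in~$\MM$ by the results of Subsection~\ref{SS:LeftWeighted}, it is closed under right-divisor by Lemma~\ref{L:RDClosed}, whence $\rr\in\SS$.

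It then remains to read off the triangular relations, with careful treatment of the neutral element. I would first note $\ss'\ne\ee$: indeed $\ss'=1$ would make~$\ss$ left-invertible, hence invertible by left-cancellativity, contradicting $\ss\in\SSm$. If $\rr=1$ then $\ss'=\ss$ and $\tt'=\tt$, so $\ss\sep\tt$ was normal, a case already excluded. If $\tt'=\ee$, then $\pi_\ee(\nmr(\ss\sep\tt))=\ss'$ and, since $\ss\tt=\ss'\in\SS$, the relation $\ss\sep\tt=\ss'$ is itself a member of~$\TT$. In the remaining case $\rr,\tt'\in\SSm$, and the two equalities $\ss\rr=\ss'\in\SS$ and $\rr\tt'=\tt\in\SS$ are triangular relations $\ss\sep\rr=\ss'$ and $\rr\sep\tt'=\tt$ in~$\TT$; composing them gives $\ss\sep\tt = \ss\sep\rr\sep\tt' = \ss'\sep\tt'$ modulo~$\TT$, which is exactly the quadratic relation.

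I expect the main obstacle to be the identification $\rr\in\SS$ together with the surrounding unit bookkeeping: this is the single point where left-weightedness, left-cancellativity, and the Garside structure (closure under right-divisor) all enter at once, and one must keep the various occurrences of~$1$, of~$\ee$, and of genuine elements of~$\SSm$ consistent, so that each factor $\ss\sep\rr$ and $\rr\sep\tt'$ yields an honest relation of~$\TT$ rather than a degenerate one. The rest of the argument is then routine congruence bookkeeping.
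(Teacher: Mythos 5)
Your proposal is correct and follows essentially the same route as the paper's proof: left-weightedness produces $\rr$ with $\ss\rr=\ss'$, left-cancellativity yields $\tt=\rr\tt'$, Lemma~\ref{L:RDClosed} puts $\rr$ in~$\SS$, and the quadratic relation splits into the two triangular relations $\ss\sep\rr=\ss'$ and $\rr\sep\tt'=\tt$. The only (harmless) detour is that you invoke the Garside-family property to get closure under right-divisors, whereas Lemma~\ref{L:RDClosed} gives that directly; your extra bookkeeping on the cases $\rr=1$ and $\tt'=\ee$ matches what the paper needs.
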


\begin{proof}
By Proposition~\ref{P:QuadPres}~\ITEM1, we know that $\MM$ admits a presentation in terms of~$\SS$ by the relations $\ss\sep\tt = \pi_{\ee}(\nm(\ss\sep\tt))$ with $\ss, \tt\in\SSm$. First, if $\ss,\tt$ in~$\SSm$ satisfy $\ss\tt\in\SSm$, then we must have $\nm(\ss\sep\tt)=\ss\tt\sep\ee$, and, if they satisty $\ss\tt=1$, then we must have $\nm(\ss\sep\tt)=\ee\sep\ee$, so that $\pi_{\ee}(\nm(\ss\sep\tt))=\ss\tt$ holds in both cases. Thus, $\TT$ is included in the presentation of Proposition~\ref{P:QuadPres}~\ITEM1. Conversely, let us show that each relation $\ss\sep\tt = \pi_{\ee}(\nm(\ss\sep\tt))$ with $\ss,\tt$ in~$\SSm$ follows from a finite number of relations of~$\TT$. So assume that~$\ss$ and~$\tt$ lie in~$\SSm$ and let $\ss' \sep \tt' = \nm(\ss \sep \tt)$. If $\tt'=\ee$ holds, we have $\ss'=\ss\tt$ in~$\MM$, so the result is true. Otherwise, the assumption that $(\SS, \nm)$ is left-weighted implies that there exists~$\rr$ in~$\MM$ satisfying $\ss \rr = \ss'$. By construction, $\rr$ is a right-divisor of~$\ss'$ in~$\MM$ so, by Lemma~\ref{L:RDClosed}, $\rr$ must lie in~$\SS$. Then, in~$\MM$, we have $\ss' = \ss \rr$, whence $\ss \tt = \ss \rr \tt'$. The assumption that~$\MM$ is left-cancellative implies $\tt = \rr \tt'$. Hence the relation $\ss\sep\tt = \ss'\sep\tt'$ is the consequence of $\ss\sep\rr = \ss'$ and $\rr \tt' = \tt$.
\end{proof}

Note that the existence of the presentation of Proposition~\ref{P:Triangle} is only possible in a non-graded context, except for the free monoid~$\SS^*$ with its presentation $\PRESp{\SS}{}$.

\begin{exam}
Consider the braid monoid~$\BP3$, that is, the monoid presented by $\PRESp{\tta, \ttb}{\tta\ttb\tta = \ttb\tta\ttb}$. Then $\BP3$ has a Garside family consisting of the six elements $1$, $\tta$, $\ttb$, $\tta\ttb$, $\ttb\tta$, and~$\tta\ttb\tta$. Proposition~\ref{P:Triangle} provides a presentation of~$\BP3$ whose generators are the five nontrivial elements of the Garside family, and with the six relations $\tta \sep \ttb = \tta\ttb$, $\ttb \sep \tta = \ttb\tta$, and $\tta\sep\ttb\tta=\ttb\sep\tta\ttb=\tta\ttb\sep\tta=\ttb\tta\sep\ttb=\tta\ttb\tta$. This presentation is much smaller than the one provided by~\eqref{E:QuadNeutral2}, that has the same five generators and $5^2=25$ relations, such as $\tta\ttb\sep\tta\ttb=\tta\ttb\tta\sep\ttb$ or $\tta\sep\tta=\tta\sep\tta$. 
\end{exam}

Another subsequent development is that Garside families give rise to convergent rewriting system. Indeed, Propositions~\ref{P:43Terminating} and~\ref{P:GarCharac} directly imply

\begin{prop}\label{P:GarTermination}
Assume that~$\MM$ is a left-cancellative monoid with no nontrivial invertible elements and~$\SS$ is a Garside family in~$\MM$. Let~$\RR$ consist of all rules $\ss\sep\tt\fl\ww$ with $\ss,\tt$ in~$\SS\setminus\{1\}$ and~$\ww$ the minimal length $\SS$-normal decomposition of~$\ss\sep\tt$. Then the rewriting system $(\SS\setminus\{1\},\RR)$ is convergent.
\end{prop}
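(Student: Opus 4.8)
The plan is to recognise the rewriting system of the statement as the system $(\SSm, \RR_\ee)$ of Proposition~\ref{P:43Terminating}, specialised to the normalisation derived from~$\SS$, and then to invoke that proposition directly; the whole argument is a matter of matching definitions.

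First I would attach to the Garside family~$\SS$ its derived normalisation~$(\SS, \nmS)$, which exists by Proposition~\ref{P:GarNormal} together with Proposition~\ref{P:GenNF}. By Proposition~\ref{P:GarClass}, this normalisation is quadratic of class~$(4, 3)$, and, as noted after Proposition~\ref{P:GarNormal}, the unit~$1$ is an $\nmS$-neutral element of~$\SS$, so that $\SSm = \SS \setminus \{1\}$. The hypotheses of Proposition~\ref{P:43Terminating} are therefore satisfied with $\ee = 1$, and that proposition guarantees that $(\SSm, \RR_\ee)$ is convergent.

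It then remains to check that $(\SSm, \RR_\ee)$ is literally the system $(\SS\setminus\{1\}, \RR)$ of the statement. By Proposition~\ref{P:NeutralQuadRewr}, the rules of~$\RR_\ee$ are $\ss\sep\tt \fl \pi_\ee(\nmrS(\ss\sep\tt))$ for $\ss, \tt$ in~$\SSm$ with $\ss\sep\tt \neq \nmrS(\ss\sep\tt)$. Because $(\SS, \nmS)$ derives from the Garside family, $\nmrS(\ss\sep\tt)$ is the minimal-length $\SS$-normal decomposition of~$\ss\tt$, right-padded with copies of~$1$ to restore length two; applying~$\pi_\ee$ deletes those padding letters, so that $\pi_\ee(\nmrS(\ss\sep\tt))$ is exactly the minimal-length $\SS$-normal decomposition~$\ww$ of~$\ss\tt$ prescribed in the statement. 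Hence the two rule-sets coincide and convergence transfers.

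Since the result is a genuine corollary, I do not expect a serious obstacle. The only delicate point is the identification $\pi_\ee(\nmrS(\ss\sep\tt)) = \ww$: it hinges on the precise padding-and-truncation correspondence between geodesic normal forms and length-preserving normalisations recorded in Proposition~\ref{P:GenNF}, together with the uniqueness of the minimal-length $\SS$-normal decomposition from Proposition~\ref{P:GarNormal}. Once this is spelled out, convergence of $(\SS\setminus\{1\}, \RR)$ follows immediately from that of $(\SSm, \RR_\ee)$.
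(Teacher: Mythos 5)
Your proposal is correct and follows essentially the same route as the paper, which simply observes that Propositions~\ref{P:43Terminating} and~\ref{P:GarCharac} (whose relevant implication rests on Proposition~\ref{P:GarClass}, the result you invoke directly) yield the statement. The only difference is that you spell out the identification of the rule set~$\RR$ with~$\RR_\ee$ via the padding-and-truncation correspondence of Proposition~\ref{P:GenNF}, a verification the paper leaves implicit.
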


As mentioned in Example~\ref{X:Artin}, every finitely generated Artin--Tits monoid admits a finite Garside family. Being also left-cancellative with no nontrivial invertible element, Artin--Tits monoids are thus eligible to Proposition~\ref{P:GarTermination}.

\begin{coro}\label{C:ArtinConvergent}
Every Artin--Tits monoid admits a finite quadratic convergent presentation.
\end{coro}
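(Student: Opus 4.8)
The plan is to assemble the corollary from the machinery already in place, treating Proposition~\ref{P:GarTermination} as the final black box and the existence of a finite Garside family as the essential structural input. Let $\MM$ be a finitely generated Artin--Tits monoid. First I would recall, as stated in Example~\ref{X:Artin} and proved in~\cite{Din}, that $\MM$ admits a \emph{finite} Garside family~$\SS$: when the associated Coxeter group~$\WW$ is finite the divisors of the Garside element already form such a family, while when~$\WW$ is infinite --- the case one must handle to reach full generality --- the copy of~$\WW$ is no longer finite, yet~\cite{Din} still guarantees that a finite Garside family exists. This is the one genuinely non-elementary ingredient, and I would cite it rather than reprove it.

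Next I would verify that~$\MM$ meets the standing hypotheses of Proposition~\ref{P:GarTermination}, namely that~$\MM$ is left-cancellative and contains no nontrivial invertible element. Both are classical structural facts about Artin--Tits monoids (cancellativity and triviality of the group of units), already flagged in the sentence preceding the statement, and I would simply record them with a citation. With~$\SS$ a Garside family in such an~$\MM$, Proposition~\ref{P:GarClass} shows the derived normalisation is quadratic of class~$(4,3)$, and Proposition~\ref{P:GarTermination} --- itself resting on Proposition~\ref{P:43Terminating} via the class-$(4,3)$ bound $2^\pp - \pp - 1$ --- yields that the rewriting system $(\SS\setminus\{\one\}, \RR)$, with one rule $\ss\sep\tt \fl \ww$ for each pair $\ss,\tt$ of nontrivial elements of~$\SS$ (where~$\ww$ is the minimal-length $\SS$-normal decomposition of~$\ss\tt$), is convergent and presents~$\MM$.

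It then remains only to read off the quantitative claims. Since~$\SS$ is finite, the generating set $\SS\setminus\{\one\}$ is finite and, the rules being indexed by ordered pairs of its elements, $\RR$ is finite as well. Every rule has a source of length two, so the presentation is quadratic in the sense used throughout the paper, that is, it is determined by its restriction to length-two words. The only point deserving a word of care is that the target~$\ww$ may have length one rather than two --- precisely when $\ss\tt$ already lies in~$\SS$ --- so the rules have length-two sources and targets of length at most two, exactly as in the finite-Coxeter-type presentations of~\cite{GaussentGuiraudMalbos}. Combining finiteness, quadraticity, and the convergence supplied by Proposition~\ref{P:GarTermination} gives the corollary.

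I expect the main obstacle to be expository rather than mathematical: everything hard has been front-loaded into Proposition~\ref{P:GarTermination} (hence into Theorems~B and~C) and into the cited existence of a finite Garside family from~\cite{Din}. The two delicate points are thus (i) ensuring that the hypotheses \emph{left-cancellative} and \emph{no nontrivial invertible element} are legitimately available for \emph{all} Artin--Tits monoids, not merely the spherical ones, and (ii) reconciling the length-one targets with the word ``quadratic''; neither requires new computation.
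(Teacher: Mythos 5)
Your proposal is correct and follows the paper's own route exactly: the corollary is obtained by combining the existence of a finite Garside family in every finitely generated Artin--Tits monoid (Example~\ref{X:Artin}, citing~\cite{Din}) with the classical facts that such monoids are left-cancellative and have no nontrivial invertible elements, so that Proposition~\ref{P:GarTermination} applies and finiteness of the Garside family yields finiteness and quadraticity of the resulting convergent presentation. Your extra remarks on length-one targets and on the non-spherical case are sensible points of care but do not change the argument.
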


\begin{exam}
In the case of a spherical Artin--Tits monoid, the elements of the corresponding Coxeter group form a finite Garside family, and Corollary~\ref{C:ArtinConvergent} corresponds to~\cite[Th.~3.1.3, Prop.~3.2.1]{GaussentGuiraudMalbos}. In the nonspherical case, Corollary~\ref{C:ArtinConvergent} is an improvement of the latter results, which only give an infinite convergent presentation. For instance, in type~$\Att$, the $16$-element Garside family described in Example~\ref{X:Artin} yields a convergent rewriting system for~$\Att$ with~$15$ generators and~$87$ relations.
\end{exam}


\end{document}